\newtheorem{theorem}{Theorem}[section]
\newtheorem{lemma}[theorem]{Lemma}
\newtheorem{proposition}[theorem]{Proposition}
\newtheorem{corollary}[theorem]{Corollary}
\newtheorem{remark}[theorem]{Remark}
\newtheorem{definition}[theorem]{Definition}
\newtheorem{assumption}{Assumption}
\newcommand{\eremk}{\hbox{}\hfill\rule{0.8ex}{0.8ex}}
\numberwithin{equation}{section}
\newcommand{\pdt}[1]{%
\partial_{
  \begingroup
  \my@repeat@count=\z@
  \@whilenum\my@repeat@count<#1\do{t\advance\my@repeat@count\@ne}%
  \endgroup}
}
\definecolor{darkblue}{rgb}{0.0, 0.0, 0.55}
\newcommand{\CS}{C_S}
\newcommand{\Cp}{C_{\Pi}}
\newcommand{\Cinv}{C_{\mathrm{inv}}}
\newcommand{\Id}{\mathsf{Id}}
\newcommand\In{I_n}
\newcommand\SD{\Sigma_{\mathcal{D}}}
\newcommand\SO{\Sigma_0}
\newcommand\ST{\Sigma_T}
\newcommand\Sn{\Sigma_n}
\newcommand\Qn{Q_n}
\newcommand\tnmo{t_{n-1}}
\newcommand\tn{t_n}
\newcommand\tmmo{t_{m-1}}
\newcommand\tm{t_m}
\newcommand\Th{\mathcal{T}_h}
\newcommand\Tt{\mathcal{T}_{\tau}}
\newcommand{\dV}{\text{d}V}
\newcommand\dx{\mathrm{d}\bx}
\newcommand\ds{\mathrm{d}s}
\newcommand\dt{\mathrm{d}t}
\newcommand\ev{e_v}
\newcommand\eu{e_u}
\newcommand\calR{\mathcal{R}}
\newcommand{\diam}{\mathrm{diam}}
\newcommand{\ut}{u_{\tau}}
\newcommand{\vt}{v_{\tau}}
\newcommand{\wt}{w_{\tau}}
\newcommand{\zt}{z_{\tau}}
\newcommand{\uhtsup}{u_{h, \tau}^{\star}}
\newcommand{\utsup}{u_{\tau}^{\star}}
\newcommand{\uht}{u_{h, \tau}}
\newcommand{\ugh}{u_{h, \tau}^{\calD}}
\newcommand{\vgh}{v_{h, \tau}^{\calD}}
\newcommand{\uoh}{u_{0,h}}
\newcommand{\voh}{v_{0,h}}
\newcommand{\Vhp}{\mathcal{V}_{h}^p}
\newcommand{\Vhpo}{\accentset{\circ}{\mathcal{V}}_{h}^p}
\newcommand{\Vtq}{\mathcal{V}_{\tau}^q}
\newcommand{\VVtq}{\mathbb{V}_{\tau}^q}
\newcommand{\Wtq}{\mathcal{W}_{\tau}^q}
\newcommand{\WWtqmo}{\mathbb{W}_{\tau}^{q-1}}
\newcommand{\WWtqmt}{\mathbb{W}_{\tau}^{q-2}}
\newcommand{\Wtr}{\mathcal{W}_{\tau}^r}
\newcommand{\Vht}{\mathcal{V}_{h, \tau}^{p, q}}
\newcommand{\Vhtqpo}{\accentset{\circ}{\mathcal{V}}_{h, \tau}^{p, q + 1}}
\newcommand{\Vhto}{\accentset{\circ}{\mathcal{V}}_{h, \tau}^{p, q}}
\newcommand{\Wht}{\accentset{\circ}{\mathcal{W}}_{h, \tau}^{p, q - 1}}
\newcommand{\Whts}{{\mathcal{W}}_{h, \tau}^{p, q - 1}}
\newcommand{\Whtt}{\accentset{\circ}{\mathcal{W}}_{h, \tau}^{p, q - 2}}
\newcommand{\zh}{z_h}
\newcommand{\zht}{z_{h, \tau}}
\newcommand{\wht}{w_{h, \tau}}
\newcommand{\vht}{v_{h, \tau}}
\newcommand{\Pih}{\accentset{\circ}{\Pi}_h}
\newcommand{\Rh}{{\mathcal{R}}_h}
\newcommand{\Rho}{\accentset{\circ}{\mathcal{R}}_h}
\newlength{\dhatheight}
\newcommand\IR{\mathbb{R}}
\newcommand\IN{\mathbb{N}}
\newcommand\QT{Q_T}
\newcommand{\gD}{g_{\mathcal{D}}}
\newcommand\bx{\boldsymbol{x}}
\newcommand\bnOmega{{\mathbf{n}}_{\Omega}}
\newcommand\dpt{\partial_t}
\newcommand\dptt{\partial_{tt}}
\newcommand\calD{\mathcal{D}}
\newcommand{\Norm}[2]{\|#1\|_{#2}}
\newcommand{\SemiNorm}[2]{\left|#1\right|_{#2}}
\newcommand{\Pp}[2]{\mathbb{P}^{#1}(#2)}
\newcommand{\Ih}{\mathcal{I}_h}
\newcommand{\Ihg}{\mathcal{I}_h^{\partial}}
\newcommand{\Pt}{\mathcal{P}_{\tau}}
\newcommand{\Piht}{\Pi_{h\tau}}
\newcommand{\epiu}{e_u^{\Pi}}
\newcommand{\epiv}{e_v^{\Pi}}
\title{A variational approach to the analysis of the continuous\\ space--time FEM for the wave equation}
\author{Sergio G\'omez\thanks{Department of Mathematics and Applications, University of Milano-Bicocca, Via Cozzi 55, 20125 Milan, Italy (\href{mailto:sergio.gomezmacias@unimib.it}{sergio.gomezmacias@unimib.it})}\ \thanks{IMATI-CNR ``E. Magenes", Via Ferrata 5, 27100, Pavia, Italy}
\orcidlink{0000-0001-9156-5135}}
\date{}
\begin{document}

\maketitle

\begin{abstract}
\noindent We present a stability and convergence analysis of the space--time continuous finite element method for the Hamiltonian formulation of the wave equation. More precisely, we prove a continuous dependence of the discrete solution on the data in a~$C^0([0, T]; X)$-type energy norm, which does not require any restriction on the meshsize or the time steps.
Such a stability result is then used to derive~\emph{a priori} error estimates with quasi-optimal convergence rates, where a suitable treatment of possible nonhomogeneous Dirichlet boundary conditions is pivotal to avoid loss of accuracy.
Moreover, based on the properties of a postprocessed approximation, we derive a constant-free, reliable~\emph{a posteriori} error estimate in the~$C^0([0, T]; L^2(\Omega))$ norm for the semidiscrete-in-time formulation. Several numerical experiments are presented to validate our theoretical findings.
\end{abstract}

\paragraph{Mathematics Subject Classification.} 65M60, 65M12, 35L04
\section{Introduction}
The aim of this work is to present a robust stability and convergence analysis of the space--time continuous finite element method (FEM) originally introduced by French and Peterson in~\cite{French_Peterson:1991} for the wave equation.
Compared to previous works, our analysis is characterized by \emph{i)} the absence of restrictions on the time steps, \emph{ii)} weaker regularity assumptions in the~\emph{a priori} error estimates, \emph{iii)} a less pessimistic dependence of the error growth constants on the final time~$T$, which results from avoiding the use of Gr\"onwall-type estimates, and~\emph{iv)} the treatment of smooth Dirichlet boundary conditions. In addition, we also derive an~\emph{a posteriori} error estimate for the semidiscrete-in-time formulation.

Let the space--time cylinder~$\QT$ be given by~$\Omega \times (0, T)$, where~$\Omega \subset \IR^d$ ($d \in \{1, 2, 3\}$) is a polytopic domain with boundary~$\partial \Omega$, and~$T > 0$ is the final time.
We define the surfaces
$$\SO := \Omega \times \{0\}, \quad \ST := \Omega \times \{T\}, \quad \text{ and } \quad \SD := \partial \Omega \times (0, T).$$ 

Given initial data~$u_0: \Omega \to \IR$ and~$v_0 : \Omega \to \IR$, a Dirichlet datum~$\gD : \SD \to \IR$,
a source term~$f : \QT \to \IR$, and a strictly positive wavespeed~$c : \Omega \to \IR$ independent of the time variable, we consider the following linear acoustic wave problem: find~$u : \QT \to \IR$, such that
\begin{equation}
\label{eq:model-problem}
\begin{cases}
\dptt u - \nabla \cdot( c^2 \nabla u) = f & \text{ in } \QT, \\
u = \gD & \text{ on } \SD, \\
u = u_0 \quad \text{ and }\quad \dpt u = v_0 & \text{ on } \SO.
\end{cases}
\end{equation}
This model can be rewritten in the \emph{so-called} Hamiltonian formulation as follows: find~$u : \QT \to \IR$ and~$v : \QT \to \IR$, such that
\begin{equation}
\label{eq:model-problem-hamiltonian}
\begin{cases}
v = \dpt u & \text{ in } \QT, \\
\dpt v - \nabla \cdot (c^2 \nabla u) = f & \text{ in } \QT, \\
u = \gD & \text{ on } \SD, \\
u = u_0 \quad \text{ and }\quad v = v_0 & \text{ on } \SO.
\end{cases}
\end{equation}

\paragraph{Continuous weak formulation.}
Let the data of the problem satisfy the following assumptions:
\begin{equation*}
\begin{gathered}
f \in L^1(0, T; L^2(\Omega)), \quad \gD \in C^0([0, T]; H^{\frac12}(\partial \Omega)) \cap H^1(0, T; L^2(\partial \Omega)), \\
 u_0 \in H^1(\Omega), 
\quad v_0 \in L^2(\Omega), \quad 
c\in C^0(\overline{\Omega}) \text{ with } 0 < c_{\star} < c(\bx) < c^{\star} \text{ for all } \bx \in \Omega,
\end{gathered}
\end{equation*}
as well as compatibility of the initial and boundary data, i.e., $\gD(\cdot, 0) = u_0(\cdot)$ in~$H^{\frac12}(\partial \Omega)$.
We consider as a weak solution to~\eqref{eq:model-problem} any function~$u$ satisfying:
\begin{itemize}
\item $u \in C^0([0, T]; H^1(\Omega))$, $\dpt u \in C^0([0, T]; L^2(\Omega))$, and~$\dptt u \in L^2(0, T; H^{-1}(\Omega))$;
\item $u_{|_{\SD}}(\cdot, t) = \gD(\cdot, t)$ in~$H^{\frac12}(\partial \Omega)$ for a.e.~$t \in (0, T)$;
\item $u(\cdot, 0) = u_0$ in~$H^1(\Omega)$ and~$\dpt u(\cdot, 0) = v_0$ in~$L^2(\Omega)$;
\item for all~$w \in L^2(0, T; H_0^1(\Omega))$, it holds
\begin{equation*}
\int_0^T \big(\langle\dptt u, w\rangle + (c^2 \nabla u,  \nabla w)_{\Omega} \big)\, \dt = \int_0^T (f, w)_{\Omega}\, \dt,
\end{equation*}
where~$(\cdot, \cdot)_{\Omega}$ and~$\langle \cdot, \cdot \rangle$ denote, respectively, the inner product in~$L^2(\Omega)$ and the duality between~$H^{-1}(\Omega)$ and~$H_0^1(\Omega)$.
\end{itemize}
The proof of existence and uniqueness of a weak solution to~\eqref{eq:model-problem} can be found, e.g., in~\cite[Thm.~2.1 in Part I]{Lasiecka_Triggiani:1994}. Consequently, we consider as the weak solution to~\eqref{eq:model-problem-hamiltonian} the pair~$(u, v)$, with~$u$ as above and~$v = \dpt u \in C^0([0, T]; L^2(\Omega))$.

\paragraph{Previous works.}
Galerkin-type time discretizations are attractive schemes that provide high-order approximations.
Since they are formulated in a variational way, they also allow for an analysis that is closer to the one of the continuous problem.
In addition, their \emph{a priori} error estimates typically require weaker regularity of the continuous solution, as they do not rely on Taylor expansions.

Several Galerkin-type time discretizations have been proposed for the numerical approximation of wave problems. Discontinuous Galerkin (dG) time discretizations propagate the solution from one time slab to the next one by means of upwinding, whereas continuous Galerkin (cG) time discretizations enforce continuity in time of the discrete solution. 

It is well known that the most natural (consistent) dG time discretization for the second-order formulation~\eqref{eq:model-problem} is not always stable and may produce nonphysical oscillations.
This issue was numerically observed by Hulbert and Hughes in~\cite{Hulbert_Hughes:1990}, and was remedied by introducing a least-squares stabilization term. On the other hand, in the presence of a sufficiently strong first-order-in-time damping term, the method regains unconditional stability (see, e.g., \cite{Antonietti_etal:2018}).
The dG class also includes the method by Johnson~\cite{Johnson:1994} for the Hamiltonian formulation~\eqref{eq:model-problem-hamiltonian}, and the method by French~\cite{French:1993} for the second-order formulation~\eqref{eq:model-problem}, where the latter uses exponential-weighted inner products.

The combined dG--cG method by Walkington~\cite{Walkington:2014} for the second-order formulation~\eqref{eq:model-problem} uses continuous approximations in time for the discrete approximation of~$u$, but allows for discontinuities of its first-order time derivative.
Recently, the~$hp$-version of this method was studied in~\cite{Dong-Mascotto-Wang:2024}, and the dG-cG scheme was analyzed for a quasilinear ultrasound wave model in~\cite{Gomez-Nikolic:2024}.
The ingenious techniques used in~\cite{Walkington:2014} to show unconditional stability of the scheme inspired the present work.

It turns out that the most natural continuous-in-space-and-time discretization (i.e., with~$H^1(\QT)$-conforming test and trial spaces) of the second-order formulation~\eqref{eq:model-problem} is only conditionally stable (see~\cite[\S4.2]{Zank:2019}).
Unconditional stability can be obtained by adding a suitable stabilization term~\cite{Zank:2019proc}; see also~\cite{Fraschini_etal:2024,ferrari2024stability} for a high-order B-splines Galerkin version.

We focus on the cG method by French and Peterson~\cite{French_Peterson:1991} for the Hamiltonian formulation~\eqref{eq:model-problem-hamiltonian}, which, unlike all the aforementioned methods, preserves the energy of the scheme when~$\gD = 0$ and~$f = 0$.
Such a property can be crucial in the numerical approximation of relativistic wave models~\cite{Strauss_Vazquez:1978}, 
which were already part of the motivation of~\cite{French_Peterson:1996}. 
Thus, we hope that the present analysis will be useful for designing robust schemes for such models.

Some variations of the method in~\cite{French_Peterson:1991} consider $C^{\ell}$-continuous ($\ell \geq 1$) piecewise polynomial approximations in time; see~\cite{Ferrari_etal:2024} for a B-splines Galerkin version, and~\cite{Anselmann_Bause:2020,Anselmann_etal:2020} for a collocation version. 
In the literature, there are several other methods in the larger class of space--time methods for wave problems, but these are out of the scope of this work.

\paragraph{Main contributions.}
We carry out a robust stability and convergence analysis of the space--time continuous FEM in~\cite{French_Peterson:1991}. 
Compared to the several analyses of such a scheme in previous works, we make the following improvements:
\begin{enumerate}[i)]
    \item We use variational arguments, and do not assume any restrictions on the time steps. This is in contrast to the theoretical CFL condition~$(\tau \le C h)$ in~\cite{French_Peterson:1991}, which results from an unsuitable choice of the time projection in the error analysis. 
    Moreover, techniques relying on the structure of the matrices stemming from the method typically require uniform time steps; see, e.g,, the works by Bales and Lasiecka~\cite{Bales-Lasiecka:1994,Bales-Lasiecka:1995}, as well as the recent analysis in~\cite{Ferrari_etal:2024} of the version using maximal regularity B-splines. 

    \item The constants in both our stability and \emph{a priori} error estimates are independent of the final time~$T$, which is mainly due to the avoidance of Gr\"onwall-type estimates. In fact, the analyses in~\cite{Bales-Lasiecka:1994,Bales-Lasiecka:1995,Zhao-Li:2016,Karakashian_Makridakis:2005} predict an exponential growth in time of the stability and error constants, which is very pessimistic. A milder growth of such constants was obtained in~\cite{French_Peterson:1996}.

    \item We require weaker regularity assumptions of the problem data and the continuous solution. For instance, in the stability estimates, we only require~$f \in L^1(0, T; L^2(\Omega))$, whereas~$f \in L^2(\QT)$ is assumed in~\cite{French_Peterson:1991,Bales-Lasiecka:1994,Bales-Lasiecka:1995,Zhao-Li:2016}, $f \in L^{\infty}(0, T; L^2(\Omega))$ is assumed in~\cite{French_Peterson:1996}, and~$f \in C^0([0, T]; L^2(\Omega))$ is needed when considering the collocation version of the method~\cite[Rem.~3.5]{Bause_etal:2020}.
    As usual, the regularity of~$f$ required in the stability analysis influences the regularity of the continuous solution required in the error estimates.

    \item We address the issue of the strong imposition of smooth nonhomogeneous Dirichlet boundary conditions, which must be done carefully so as to obtain quasi-optimal convergence rates.
    If the Dirichlet datum~$\gD$ is less smooth, one can impose it weakly using a Nitsche's-type method, as discussed in~\cite{Bales-Lasiecka:1994}.

    \item We derive a constant-free, reliable~\emph{a posteriori} error estimate for the cG time discretization of~\eqref{eq:model-problem-hamiltonian}. Similar estimates have been derived in~\cite[\S4]{Johnson:1994} and~\cite[\S3.4]{Makridakis_Nochetto:2006} for the dG time discretization of~\eqref{eq:model-problem-hamiltonian}, and in~\cite[\S3]{Dong-Mascotto-Wang:2024} for the dG-cG time discretization of~\eqref{eq:model-problem}. \emph{A posteriori} error estimates have also been derived in different norms for standard time stepping schemes in~\cite{Adjerid:2002,Bernardi_Suli:2005,Chaumont_Ern:2024,Georgoulis_etal:2016,Grote_etal:2024,Bangerth_etal:2010}.
\end{enumerate}

Denoting by~$\uht$ and~$\vht$ the fully discrete approximations of~$u$ and~$v$, respectively, we prove the following continuous dependence on the data (see Theorem~\ref{thm:continuous-dependence} below):
\begin{alignat*}{3}
\frac12 \big(\Norm{\vht}{C^0([0, T]; L^2(\Omega))}^2 + \Norm{c \nabla \uht}{C^0([0, T]; L^2(\Omega)^d)}^2\big) \lesssim \frac12 \big(\Norm{v_0}{L^2(\Omega)}^2 + \Norm{c\nabla u_0}{L^2(\Omega)^d}^2\big) + \Norm{f}{L^1(0, T; L^2(\Omega))}^2,
\end{alignat*}
where the hidden constant depends only on the degree of approximation in time~$q \geq 1$.

Moreover, in Theorem~\ref{thm:a-priori} below, we prove that, for a sufficiently smooth solution, it holds
\begin{equation*}
\begin{split}
\Norm{v - \vht}{C^0([0, T]; L^2(\Omega))} & \lesssim h^{p + 1} + \tau^{q + 1}, \\
\Norm{c \nabla (u - \uht)}{C^0([0, T]; L^2(\Omega)^d)} & \lesssim h^p + \tau^{q + 1}, \\
\Norm{u - \uht}{C^0([0, T]; L^2(\Omega))} & \lesssim h^{p + 1} + \tau^{q + 1}, 
\end{split}
\end{equation*}
where~$h$ is the meshsize, $\tau$ is the maximum time step, and~$p \geq 1$ and~$q \geq 1$ are the degrees of approximation in space and time, respectively.

Finally, in Theorem~\ref{thm:a-posteriori} below, We derive an~\emph{a posteriori} error estimate of the form
\begin{equation*}
\Norm{u - \ut}{C^0([0, T]; L^2(\Omega))} \le \eta + \mathrm{osc}(f),
\end{equation*}
where~$\eta$ depends on the semidiscrete-in-time solution~$(\ut, \vt)$, and~$\mathrm{osc}(f)$ is an oscillation term. This estimate does not involve unknown constants.

\paragraph{Notation.} 
Given~$m \in \IN$, we denote the~$m$th partial time derivative by~$\dpt^{(m)}$, and by~$\nabla$ and~$\Delta$ the spatial gradient and Laplacian operators.
We use standard notation for~$L^p$, Sobolev, and Bochner spaces. For instance, given~$s \in \IR^+$, $p \in [1, \infty]$, and an open, bounded domain~$\calD \subset \IR^d$ ($d \in \{1, 2, 3\}$) with Lipschitz  boundary~$\partial \cal D$, we denote the corresponding Sobolev space by~$W_p^{s}(\calD)$, its seminorm by~$|\cdot|_{W_p^s(\calD)}$, and its norm by~$\|\cdot\|_{W_p^s(\calD)}$. 
For~$p = 2$, we use the notation~$H^s(\calD) = W_p^s(\calD)$ with seminorm~$|\cdot|_{H^s(\calD)}$ and norm~$\|\cdot\|_{H^s(\calD)}$, and, for~$s = 0$, $H^0(\calD) := L^2(\calD)$ is the space of Lebesgue square integrable functions over~$\calD$ with inner product~$(\cdot, \cdot)_{\calD}$ and norm~$\Norm{\cdot}{L^2(\calD)}$. The closure of~$C_0^{\infty}(\calD)$ in the~$H^1(\calD)$ norm is denoted by~$H_0^1(\calD)$.

Given a Banach space~$X$ and a time interval~$(a, b)$, we denote the corresponding Bochner-Sobolev space by~$W_p^s(a, b; X)$.

We use the following notation for the algebraic tensor product of two vector spaces, say~$V$ and~$W$:
\begin{equation*}
    V \otimes W := \mathrm{span} \big\{vw \, : \, v \in V \text{ and } w \in W \big\}.
\end{equation*}

\paragraph{Structure of the rest of the paper.}
In Section~\ref{sec:description-of-the-method}, we present the space--time continuous FEM, and discuss some alternative formulations.
Section~\ref{sec:continuous-dependence} is devoted to proving a continuous dependence of the discrete solution on the data of the problem, which is based on some nonstandard discrete test functions. 
In Section~\ref{sec:convergence-analysis}, we carry out an~\emph{a priori} error analysis taking into account possible nonhomogeneous Dirichlet boundary conditions, and show quasi-optimal convergence rates in some~$C^0([0, T]; X)$-type norms. 
Based on the properties of the postprocessed approximation presented in Section~\ref{sec:postprocessing}, we derive, in Section~\ref{sec:posteriori}, a constant-free, reliable~\emph{a posteriori} estimate of the error of the semidiscrete-in-time approximation. We present some numerical experiments in~$(2 + 1)$ dimensions in Section~\ref{sec:numerical-exp}, and some concluding remarks in Section~\ref{sec:conclusions}.

\section{Description of the method\label{sec:description-of-the-method}}
In this section, we introduce some notation for the space--time meshes and the discrete spaces used in the description and analysis of the method. After presenting the discrete space--time formulation, we discuss its equivalence to some alternative formulations.

Let~$\{\Th\}_{h > 0}$ be a family of shape-regular conforming simplicial meshes for the spatial domain~$\Omega$, and let~$\Tt$ be a partition of the time interval~$(0, T)$ determined by~$0 =: t_0 < t_1 < \ldots < t_N := T$. For~$n = 1, \ldots, N$, we define the time interval~$\In := (\tnmo, \tn)$, the time step~$\tau_n = \tn - \tnmo$, the surface~$\Sn := \Omega \times \{\tn\}$, and the partial cylinder~$\Qn := \Omega \times \In$. Moreover, we define the meshsize~$h := \max_{K \in \Th} \diam(K)$ and the maximum time step~$\tau := \max_{\In \in \Tt} \tau_n$.

Let~$p, \, q \in \IN$ with~$p \geq 1$ and~$q \geq 1$ denote the degrees of approximation in space and time, respectively. 
We define the following piecewise polynomial spaces:
\begin{subequations}
\begin{alignat}{3}
\Vhp & := \big\{v \in H^1(\Omega) \ : \ v_{|_K} \in \Pp{p}{K} \ \text{ for all } K \in \Th \big\}, \\
\Vhpo & := \Vhp \cap H_0^1(\Omega),\\
\Vtq & := \big\{v \in C^0[0, T] \ : \ v_{|_{\In}} \in \Pp{q}{\In}\ \text{ for } n = 1, \ldots, N \big\},\\
\Wtq & := \big\{v \in L^2(0, T) \ : \ v_{|_{\In}} \in \Pp{q}{\In}\ \text{ for } n = 1, \ldots, N \big\},\\
\label{def:Vht}
\Vht & := \big\{v \in C^0([0, T]; H^1(\Omega)) \ : \ v_{|_{\Qn}} \in \Pp{q}{\In} \otimes \Vhp\ \text{ for } n = 1, \ldots, N \big\}, \\
\Vhto & := \Vht \cap C^0([0, T]; H_0^1(\Omega)), \\
\Whts & := \big\{w \in L^2(0, T; H^1(\Omega)) \ : \ w_{|_{\Qn}} \in \Pp{q-1}{\In} \otimes \Vhp\ \text{ for } n = 1, \ldots, N \big\}, \\
\Wht & := \big\{w \in L^2(0, T; H_0^1(\Omega)) \ : \ w_{|_{\Qn}} \in \Pp{q-1}{\In} \otimes \Vhpo\ \text{ for } n = 1, \ldots, N \big\}.
\end{alignat}
\end{subequations}

Let~$\ugh \in \Vht$ and~$\vgh \in \Vht$ be discrete liftings of~$\gD$ and its time derivative~$\dpt \gD$ (i.e., such that~$\ugh{}_{|_{\partial \Omega}} \approx \gD$ and~$\vgh{}_{|_{\partial \Omega}} \approx \dpt \gD$), respectively. 
Let also~$\uoh \in \Vhp$ and~$\voh \in \Vhp$ be discrete approximations of the initial data~$u_0$ and~$v_0$, respectively. 
Compatibility of~$\ugh$ and~$\uoh$, and of~$\vgh$ and~$\voh$ on~$\partial \Omega \times \{0\}$ is required due to the conformity of the space~$\Vht$ in~\eqref{def:Vht}.
The specific choice of these discrete approximations is discussed in details in Section~\ref{sec:discrete-data} below.

We consider the following space--time formulation: find~$\uht \in \ugh + \Vhto$ and~$\vht \in \vgh + \Vhto$, such that
\begin{subequations}
\label{eq:space-time-formulation}
\begin{alignat}{3}
\label{eq:space-time-formulation-1}
(c^2 \nabla \vht, \nabla \zht)_{\QT} - (c^2 \nabla \dpt \uht, \nabla \zht)_{\QT} & = 0  & & \qquad \forall \zht \in \Wht,\\
\label{eq:space-time-formulation-2}
(\dpt \vht, \wht)_{\QT} + (c^2 \nabla \uht, \nabla \wht)_{\QT} & = (f, \wht)_{\QT}
& & \qquad \forall \wht \in \Wht,
\end{alignat}
\end{subequations}
with discrete initial conditions~$\uht(\cdot, 0) = \uoh$ and~$\vht(\cdot, 0) = \voh$.

Equation~\eqref{eq:space-time-formulation-1} can be seen as a discretization of the relation~$\nabla \cdot (c^2 \nabla v) = \nabla \cdot (c^2 \nabla \dpt u)$ with some boundary conditions.

\begin{remark}[Equivalent formulations]
\label{rem:equivalent-formulations}
The space--time formulation~\eqref{eq:space-time-formulation} describes a Petrov--Galerkin method, as it involves different test and trial spaces. Using the fact that~$\dpt \Vhto = \Wht$, it is possible to rewrite~\eqref{eq:space-time-formulation} as follows: 
\begin{subequations}
\label{eq:space-time-equivalent}
\begin{alignat}{3}
(c^2 \nabla \vht, \nabla \dpt \zht)_{\QT} - (c^2 \nabla \dpt \uht, \nabla \dpt  \zht)_{\QT} & = 0  & & \qquad \forall \zht \in \Vhto,\\
(\dpt \vht, \dpt \wht)_{\QT} + (c^2 \nabla \uht, \nabla \dpt \wht)_{\QT} & = (f, \dpt \wht)_{\QT}
& & \qquad \forall \wht \in \Vhto,
\end{alignat}
\end{subequations}
thus avoiding the use of different test and trial spaces. 
However, we consider that the equivalent form~\eqref{eq:space-time-equivalent} is inconvenient for the analysis, as the nonstandard test functions from the space~$\Wht$ that we use to derive stability estimates in~$C^0([0, T]; X)$-type norms cannot be easily written as the time derivative of some functions from the space~$\Vhto$.

In the case of homogeneous Dirichlet boundary conditions~$(\gD = 0)$, equation~\eqref{eq:space-time-formulation-1} is also equivalent to
\begin{equation}
\label{eq:space-time-equivalence-L2}
(\vht, \zht)_{\QT} - (\dpt \uht, \zht)_{\QT} = 0 \qquad \forall \zht \in \Wht.
\end{equation}
Actually, the inner product used in space is irrelevant, as long as it defines a norm in~$\Vhpo$.
This is due to the fact that~\eqref{eq:space-time-formulation-1} reduces to setting~$\Pi_{q-1}^t \vht = \dpt \uht$, where~$\Pi_{q-1}^t$ denotes the~$L^2(0, T)$-orthogonal projection in~$\mathcal{W}_{\tau}^{q-1}$. On the one hand, such an equivalence is no longer true when~$\gD \neq 0$, and, in general, it is different on the continuous level. 
On the other hand, it turns out that the choice in~\eqref{eq:space-time-formulation-1} is more suitable for the error analysis~(see Remark~\ref{rem:upsilon}) and more robust in practice~(see Section~\ref{sec:nonhomogeneous-dirichlet-experiment}).
\eremk
\end{remark}

\begin{remark}[Discrete lifting functions]
It is necessary to appropriately choose the discrete lifting functions~$\ugh$ and~$\vgh$ in order to obtain quasi-optimal a priori error estimates. 
In particular, they must involve the projection in time~$\Pt$ (see Section~\ref{sec:Pt}) used in the convergence analysis.
This is not only a theoretical issue, as a ``naive" treatment of nonhomogeneous Dirichlet boundary conditions may actually lead to suboptimal convergence rates; see, e.g., the numerical experiment in Section~\ref{sec:nonhomogeneous-dirichlet-experiment} below.
Such an important observation was one of the main focus in the analysis of the combined dG--cG method in~\cite{Walkington:2014}.
\eremk
\end{remark}
\section{Stability analysis\label{sec:continuous-dependence}}
In this section, we study the stability of the space--time FEM~\eqref{eq:space-time-formulation}. More precisely, in Theorem~\ref{thm:continuous-dependence} below, we show that the discrete solution to~\eqref{eq:space-time-formulation} satisfies a continuous dependence on the data in some~$C^0([0, T]; X)$-type norms with no restrictions on the meshsize~$h$ or the time step~$\tau$. 

In the remaining of this section, we assume that~$\gD = 0$, as the stability of the scheme for nonhomogeneous Dirichlet boundary conditions follows by a translation argument.

\subsection{Some auxiliary results for the stability analysis}
We first recall the stability properties of some standard orthogonal projections, and introduce some auxiliary weight functions that we use to prove the continuous dependence of the discrete solution on the data of the problem.
\subsubsection{Orthogonal projections and inverse estimates} 
We denote by~$\Pih : L^2(\Omega) \to \Vhpo$ the~$L^2(\Omega)$-orthogonal projection in~$\Vhpo$. Moreover, we denote by~$\Rho : H^1(\Omega) \to \Vhpo$ the weighted Ritz projection operator, which is defined for any~$\varphi \in H^1(\Omega)$ as the solution to the following variational problem:
\begin{equation}
\label{def:Rh}
(c^2 \nabla \Rho \varphi, \nabla \zh)_{\Omega} = (c^2 \nabla \varphi, \nabla \zh)_{\Omega} \qquad \forall \zh \in \Vhpo.
\end{equation}
Henceforth, the symbol~$\circ$ is used to recall that these projections map into the space of polynomials with zero trace~$\partial \Omega$. 

Moreover, given~$r \in \IN$, we denote by~$\Pi_r^t : L^2(0, T) \to \Wtr$ the~$L^2(0, T)$-orthogonal projection in~$\Wtr$.
In next lemmas, we present some stability results for the orthogonal projection~$\Pi_r^t$, as well as some polynomial inverse estimates.
\begin{lemma}[Stability of~$\Pi_r^t$, {see~\cite[Thm.~18.16(ii)]{Ern_Guermond-book-I}}]
\label{lemma:stab-pi-time}
Let~$r \in \IN$ and~$s \in [1, \infty]$. There exists a positive constant~$\CS$ independent of~$\tau$ such that
\begin{equation*}
\Norm{\Pi_r^t v}{L^s(0, T)} \le \CS \Norm{v}{L^s(0, T)} \qquad \forall v \in L^s(0, T) \cap L^2(0, T).
\end{equation*}
\end{lemma}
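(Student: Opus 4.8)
The plan is to exploit the locality of $\Pi_r^t$ and reduce everything to a single reference interval. Because $\Wtr$ imposes no continuity across the nodes $t_n$, the projection decouples: for each $n$, $(\Pi_r^t v)_{|_{\In}}$ is exactly the $L^2(\In)$-orthogonal projection of $v_{|_{\In}}$ onto $\Pp{r}{\In}$. Hence it suffices to prove a per-interval bound $\Norm{\Pi_r^t v}{L^s(\In)} \le \CS \Norm{v}{L^s(\In)}$ with $\CS$ independent of $n$ (in particular of $\tau_n$), and then reassemble: for $s < \infty$ one sums the $s$-th powers over $n$ to recover $\Norm{\Pi_r^t v}{L^s(0,T)} \le \CS \Norm{v}{L^s(0,T)}$, while for $s = \infty$ one takes the maximum over $n$.

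First I would remove the dependence on $\tau_n$ by an affine change of variables mapping $\In$ onto the reference interval $\widehat{I} := (0,1)$. The $L^2$-orthogonal projection onto $\Pp{r}{\widehat I}$ is invariant under this pullback, since affine maps preserve polynomial degree and rescale the $L^2$ inner product by a single positive factor; moreover the $L^s$ norms scale homogeneously, as $\Norm{w}{L^s(\In)} = \tau_n^{1/s}\Norm{\widehat w}{L^s(\widehat I)}$ for $s < \infty$ (and with no factor for $s = \infty$). The scaling factor therefore cancels on both sides, so the per-interval constant coincides with the constant for the fixed reference projection $\widehat{\Pi}_r : L^2(\widehat I) \to \Pp{r}{\widehat I}$, and is automatically independent of $\tau_n$.

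It then remains to bound $\widehat{\Pi}_r$ on $L^s(\widehat I)$ uniformly in $s \in [1,\infty]$, which I regard as the crux. A naive argument using equivalence of norms on the finite-dimensional space $\Pp{r}{\widehat I}$ together with the $L^2$-contractivity of $\widehat{\Pi}_r$ only covers $s \ge 2$ (it needs the embedding $L^s(\widehat I) \hookrightarrow L^2(\widehat I)$), and fails for $s < 2$. To treat the whole range at once I would write $\widehat{\Pi}_r$ as an integral operator,
\begin{equation*}
(\widehat{\Pi}_r \widehat v)(x) = \int_{\widehat I} K(x,y)\, \widehat v(y)\, \mathrm{d}y, \qquad K(x,y) = \sum_{i=0}^{r} \phi_i(x)\,\phi_i(y),
\end{equation*}
where $\{\phi_i\}_{i=0}^{r}$ is an $L^2(\widehat I)$-orthonormal basis of $\Pp{r}{\widehat I}$ (e.g.\ the normalized Legendre polynomials). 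Since $K$ is a polynomial, it is continuous and hence bounded on the compact square $\widehat I \times \widehat I$, so $\sup_x \int_{\widehat I} |K(x,y)|\,\mathrm{d}y \le M$ and $\sup_y \int_{\widehat I} |K(x,y)|\,\mathrm{d}x \le M$ for a finite $M$ depending only on $r$. Elementary kernel estimates then give boundedness of $\widehat{\Pi}_r$ from $L^1 \to L^1$ and from $L^\infty \to L^\infty$ with norm at most $M$, and Riesz--Thorin interpolation delivers the $L^s \to L^s$ bound with the same constant $M$ for every $s \in [1,\infty]$. Combining this uniform reference bound with the scaling and reassembly steps above yields the claim with $\CS$ independent of $\tau$ (and, in fact, of $s$).
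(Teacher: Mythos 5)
Your proof is correct, but note that the paper does not actually prove this lemma: it is quoted verbatim from the literature (Ern--Guermond, Thm.~18.16(ii)), so your argument is a self-contained substitute rather than a variant of anything in the paper. Each step checks out: the projection onto $\Wtr$ genuinely decouples interval by interval since no inter-element continuity is imposed; the affine pullback to $\widehat I=(0,1)$ commutes with the $L^2$-projection and the factor $\tau_n^{1/s}$ cancels on both sides; and the kernel $K(x,y)=\sum_{i=0}^r \phi_i(x)\phi_i(y)$ is a polynomial, hence bounded on $\widehat I\times\widehat I$, so the row/column mass bounds hold with $M$ depending only on $r$. Your route differs from the standard textbook proof of the cited result, which for $s\ge 2$ combines the $L^2$-contraction property with an inverse (norm-equivalence) estimate on the reference element and H\"older's inequality, and then obtains $s<2$ by duality, using that the projection is self-adjoint; your kernel representation treats all $s\in[1,\infty]$ at once and yields a constant independent of $s$, which the duality route also gives but less transparently. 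Two small remarks: Riesz--Thorin is stated for complex scalars, and for a real operator the complexified norm can a priori exceed the real one; this is harmless here since any fixed factor depending only on $r$ suffices, but you can avoid the issue entirely because the classical Schur test already gives $\Norm{\widehat\Pi_r \widehat v}{L^s(\widehat I)}\le M \Norm{\widehat v}{L^s(\widehat I)}$ for every $s\in[1,\infty]$ by a direct H\"older argument, with no interpolation theorem needed. Also, your remark that the naive norm-equivalence argument ``only covers $s\ge2$'' is slightly overstated: on the unit-measure interval a direct H\"older bound on the kernel gives $L^s\to L^\infty$ boundedness for every $s$, so the finite-dimensionality argument can be pushed through for $s<2$ as well; the kernel formulation is simply the cleanest way to organize it.
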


\begin{lemma}[Polynomial inverse estimates, {see~\cite[Thm.~4.5.11]{Brenner-Scott:book}}]
\label{lemma:inverse-estimates}
Let~$r \in \IN$ and~$s \in [1, \infty]$. There exists a positive constant~$\Cinv$ independent of~$\tau$, such that, for~$n = 1, \ldots, N$, it holds
\begin{subequations}
\begin{alignat}{3}
\label{eq:inverse-polynomial-time}
\Norm{\wt'}{L^s(\In)} & \le \Cinv \tau_n^{-1} \Norm{\wt}{L^s(\In)} & \qquad \forall \wt \in \Pp{r}{\Tt}, \\
\label{eq:L-infty-L2}
\Norm{\wt}{L^{\infty}(\In)} & \le (1 + \Cinv) \tau_n^{-\frac12} \Norm{\wt}{L^2(\Omega)} & \qquad \forall \wt \in \Pp{r}{\Tt}.
\end{alignat}
\end{subequations}
\end{lemma}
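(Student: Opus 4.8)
The plan is to reduce both estimates to a fixed reference interval by an affine change of variables, and then to exploit the equivalence of all norms on the finite-dimensional space of polynomials of bounded degree. Throughout I fix $n$ and work on the single interval~$\In$, on which~$\wt$ is a polynomial of degree at most~$r$.

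For the first estimate~\eqref{eq:inverse-polynomial-time}, I would introduce the affine map $\phi_n : \hat{I} := (0,1) \to \In$, $\hat{t} \mapsto \tnmo + \tau_n \hat{t}$, and set $\hat{w} := \wt \circ \phi_n \in \Pp{r}{\hat{I}}$. The change of variables gives $\Norm{\wt}{L^s(\In)} = \tau_n^{1/s} \Norm{\hat{w}}{L^s(\hat{I})}$, while the chain rule $\wt' = \tau_n^{-1}\, \hat{w}' \circ \phi_n^{-1}$ yields $\Norm{\wt'}{L^s(\In)} = \tau_n^{1/s - 1} \Norm{\hat{w}'}{L^s(\hat{I})}$ (both identities remaining valid for $s = \infty$). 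Since differentiation is a linear endomorphism of the finite-dimensional space $\Pp{r}{\hat{I}}$, it is bounded with respect to the $L^s(\hat{I})$ norm; denoting its operator norm by $\Cinv = \Cinv(r, s)$, which depends neither on~$n$ nor on~$\tau$, I obtain $\Norm{\hat{w}'}{L^s(\hat{I})} \le \Cinv \Norm{\hat{w}}{L^s(\hat{I})}$. Substituting back and cancelling the powers of~$\tau_n$ produces exactly~\eqref{eq:inverse-polynomial-time}.

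For the second estimate~\eqref{eq:L-infty-L2}, rather than scaling a second time I would derive it from~\eqref{eq:inverse-polynomial-time} with $s = 1$, which is what accounts for the additive form of the constant $(1 + \Cinv)$. Since $\wt$ is continuous on~$\overline{\In}$ and the minimum of $|\wt|^2$ over $\overline{\In}$ cannot exceed its average, there is a point $t_0 \in \overline{\In}$ with $|\wt(t_0)|^2 \le \tau_n^{-1}\Norm{\wt}{L^2(\In)}^2$, that is, $|\wt(t_0)| \le \tau_n^{-1/2}\Norm{\wt}{L^2(\In)}$. The fundamental theorem of calculus then gives, for every $t \in \In$,
\[
|\wt(t)| \le |\wt(t_0)| + \Norm{\wt'}{L^1(\In)} \le \tau_n^{-\frac12}\Norm{\wt}{L^2(\In)} + \Cinv\, \tau_n^{-1}\Norm{\wt}{L^1(\In)},
\]
where I used~\eqref{eq:inverse-polynomial-time} with $s = 1$. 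Bounding $\Norm{\wt}{L^1(\In)} \le \tau_n^{1/2}\Norm{\wt}{L^2(\In)}$ by Cauchy--Schwarz and taking the supremum over $t \in \In$ yields~\eqref{eq:L-infty-L2}.

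The argument is essentially routine, and no step is a genuine obstacle; the only points requiring care are the bookkeeping of the powers of~$\tau_n$ under the change of variables (so that the final constant is genuinely $\tau$-independent) and the selection of the ``good'' point~$t_0$ whose value is controlled by the $L^2$ average. The latter is precisely what lets me avoid a separate scaling argument for the $L^\infty$ bound and recover the additive constant $1 + \Cinv$ directly from the first estimate.
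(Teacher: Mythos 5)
Your proof is correct, and for both inequalities it reaches the same destination as the paper by a slightly more self-contained route. For~\eqref{eq:inverse-polynomial-time} the paper simply cites Brenner--Scott; your affine rescaling to~$\hat I = (0,1)$ plus boundedness of differentiation on the finite-dimensional space~$\Pp{r}{\hat I}$ is precisely the standard proof of that cited result, and your bookkeeping of the factors~$\tau_n^{1/s}$ and~$\tau_n^{1/s-1}$ is right, including the~$s=\infty$ convention. For~\eqref{eq:L-infty-L2} the paper combines the cited embedding~$\Norm{\wt}{L^{\infty}(\In)} \le \tau_n^{-1/2}\Norm{\wt}{L^2(\In)} + \tau_n^{1/2}\Norm{\wt'}{L^2(\In)}$ with the~$s=2$ case of~\eqref{eq:inverse-polynomial-time}; you instead prove the embedding from scratch (mean-value selection of~$t_0$ with~$|\wt(t_0)| \le \tau_n^{-1/2}\Norm{\wt}{L^2(\In)}$, then the fundamental theorem of calculus) and absorb the derivative term via the~$s=1$ case plus Cauchy--Schwarz. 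The two routes are equivalent --- your~$L^1$ bound on~$\wt'$ is implied by, and implies nothing weaker than, the paper's~$L^2$ step after Cauchy--Schwarz --- and both yield the same additive constant~$1+\Cinv$; what yours buys is independence from the Ern--Guermond reference, at the cost of a few extra lines. Two cosmetic remarks: the~$L^2(\Omega)$ on the right-hand side of~\eqref{eq:L-infty-L2} is evidently a typo for~$L^2(\In)$, which you correctly read it as; and your constant~$\Cinv(r,s)$ depends on~$s$ while the lemma uses a single symbol in both estimates, so strictly one should set~$\Cinv := \max$ of the two values involved (here~$s=1$ for your route, $s=2$ for the paper's) --- harmless, since~$r$ and~$s$ are fixed in the statement.
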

\begin{proof}
The inverse estimate~\eqref{eq:L-infty-L2} follows by combining the inequality~$\Norm{\wt}{L^{\infty}(\In)} \le \tau_n^{-1/2} \Norm{\wt}{L^2(\In)} + \tau_n^{1/2} \Norm{\wt'}{L^2(\In)}$ (see, e.g., \cite[Eq.~(1.9)]{Ern_Guermond-book-I}) with the inverse estimate~\eqref{eq:inverse-polynomial-time}.
\end{proof}

In addition, we denote by~$\Id$ the identity operator. In what follows, the projections~$\Pih$ and~$\Rho$ are to be understood as applied pointwise in time, whereas the projection~$\Pi_r^t$ is to be understood as applied pointwise in space.

\subsubsection{Auxiliary weight functions}
We now introduce some auxiliary weight functions similar to those used in~\cite{Walkington:2014,Gomez-Nikolic:2024,Dong-Mascotto-Wang:2024} to derive continuous dependence on the data of the discrete solution for the dG--cG scheme in~$L^{\infty}(0, T; X)$-type norms. 

For~$n = 1, \ldots, N$, we define the following linear polynomial:
\begin{equation*}
\varphi_n(t) := 1 - \lambda_n (t - \tnmo), \quad \text{ with } \lambda_n := \frac{1}{2\tau_n}.
\end{equation*}
These functions satisfy the uniform bounds
\begin{subequations}
\begin{alignat}{3}
\label{eq:uniform-bound-varphi}
0 < \frac12 \le \varphi_n(t) \le 1  & \quad \forall t \in [\tnmo, \tn], \\
\varphi_n'(t) = -\lambda_n & \quad \forall t \in [\tnmo, \tn].
\end{alignat}
\end{subequations}

\subsection{Continuous dependence on the data}
Standard energy arguments lead to bounds of the discrete solution in the energy norm only at the discrete times~$\{\tn\}_{n = 1}^N$, which is not enough to control the energy at all times. We call this
a~``\emph{weak partial bound}" of the discrete solution (see Proposition~\ref{prop:weak-continuous-wave}).
Such a result can be used in combination with nonstandard discrete test functions to prove continuous dependence on the data in  the energy norm at all times (see Theorem~\ref{thm:continuous-dependence}).

In what follows, we use the notation~$a \lesssim b$ to indicate the existence of a positive constant~$C$ independent of the meshsize~$h$ and the maximum time step~$\tau$ such that~$a \le C b$. 
Similarly, we use~$a \simeq b$ meaning that~$a \lesssim b$ and~$b \lesssim a$.

For convenience, we study the stability properties of the solution to the following perturbed space--time formulation: find~$(\uht, \vht) \in \Vhto \times \Vhto$, such that
\begin{subequations}
\label{eq:space-time-formulation-perturbed}
\begin{alignat}{3}
\label{eq:space-time-formulation-perturbed-1}
(c^2 \nabla \vht, \nabla \zht)_{\QT} - (c^2 \nabla \dpt \uht, \nabla \zht)_{\QT} & = (c^2 \nabla \Upsilon, \nabla \zht)_{\QT} & & \qquad \forall \zht \in \Wht,\\
\label{eq:space-time-formulation-perturbed-2}
(\dpt \vht, \wht)_{\QT} + (c^2 \nabla \uht, \nabla \wht)_{\QT} & = (f, \wht)_{\QT}
& & \qquad \forall \wht \in \Wht,
\end{alignat}
\end{subequations}
where~$\Upsilon \in L^2(0, T; H^1(\Omega))$
will be used to represent some terms related to the projection error in the \emph{a priori} error analysis in Section~\ref{sec:convergence-analysis} below. 

\begin{remark}[Choice of~$\uoh$ and~$\voh$]
It is convenient to set the discrete initial conditions of the space--time formulation~\eqref{eq:space-time-formulation-perturbed} as follows:
\begin{equation}
\label{eq:discrete-initial}
\uoh := \Rho u_0 \quad \text{and} \quad \voh := \Pih v_0.
\end{equation}
Taking~$\voh = \Pih v_0$ allows for~$v_0 \in L^2(\Omega)$ in the stability estimate in Theorem~\ref{thm:continuous-dependence} below. 
The assumption~$v_0 \in H^1(\Omega)$ is necessary later in the convergence analysis in Section~\ref{sec:convergence-analysis}. 
For the~$C^{\ell}$-conforming ($\ell \in \IN$) collocation version of the space--time method~\eqref{eq:space-time-formulation}, additional nonstandard discrete initial and transmission conditions have to be computed, which demand higher regularity of the continuous solution (see~\cite[Problem 3.3]{Anselmann_etal:2020} and~\cite[Rem.~2]{Anselmann_Bause:2020}).

For nonhomogeneous Dirichlet boundary conditions, the choice in~\eqref{eq:discrete-initial} has to be modified so as to ensure compatibility with the discrete liftings~$\ugh$ and~$\vgh$ (see Section~\ref{sec:discrete-data} for more details).
\eremk
\end{remark}

\begin{proposition}[Weak partial bound on the discrete solution]
\label{prop:weak-continuous-wave}
Let~$(\uht, \vht) \in \Vhto \times \Vhto$ be a solution to the discrete space--time formulation~\eqref{eq:space-time-formulation-perturbed} with discrete initial conditions given by~\eqref{eq:discrete-initial}. For~$n = 1, \ldots, N$, the following bound holds:
\begin{equation}
\label{eq:weak-partial-bound}
\begin{split}
\frac12 \big( \Norm{\vht}{L^2(\Sn)}^2 + \Norm{c \nabla \uht}{L^2(\Sn)^d}^2\big) & \le \frac12 \big(\Norm{v_0}{L^2(\Omega)}^2 + \Norm{c \nabla u_0}{L^2(\Omega)^d}^2 \big) \\
& \quad + \CS \Norm{f}{L^1(0, \tn; L^2(\Omega))} \Norm{\vht}{L^{\infty}(0, \tn; L^2(\Omega))} \\
& \quad + \CS \Norm{c \nabla \Upsilon}{L^1(0, \tn; L^2(\Omega)^d)} \Norm{c \nabla \uht}{L^{\infty}(0, \tn; L^2(\Omega)^d)},
\end{split}
\end{equation}
where~$\CS$ is the constant in Lemma~\ref{lemma:stab-pi-time}, which depends only on~$q$.
\end{proposition}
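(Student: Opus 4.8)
The plan is to test the perturbed formulation with carefully chosen nonstandard discrete test functions built from the trial solution itself, weighted by the auxiliary polynomials $\varphi_n$, in order to extract an energy identity that controls the energy at the discrete time $\tn$. The natural energy quantity is $\tfrac12(\Norm{\vht}{L^2(\Sn)}^2 + \Norm{c\nabla\uht}{L^2(\Sn)^d}^2)$, so I would first fix an index $n$, restrict attention to the cylinder $Q_1 \cup \dots \cup Q_n$, and look for test functions that, when summed over the time slabs $I_1, \dots, I_n$, telescope to produce the boundary values at $\tn$ minus the initial values. Since the test spaces $\Wht$ consist of functions that are one polynomial degree lower in time than the trial functions, the obvious candidates $\dpt \uht$ and $\dpt \vht$ live in $\Wht$ (using $\dpt\Vhto = \Wht$ from Remark~\ref{rem:equivalent-formulations}), so I would test \eqref{eq:space-time-formulation-perturbed-2} with $\wht = \dpt\uht$ and use \eqref{eq:space-time-formulation-perturbed-1} with $\zht = \dpt\uht$ to relate $\nabla\vht$ and $\nabla\dpt\uht$ modulo the perturbation $\Upsilon$.

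The key step is the algebraic manipulation that converts these tested equations into a clean energy balance. Testing \eqref{eq:space-time-formulation-perturbed-2} with $\dpt\uht$ gives $(\dpt\vht, \dpt\uht)_{Q} + (c^2\nabla\uht, \nabla\dpt\uht)_{Q} = (f, \dpt\uht)_{Q}$; the second term is $\tfrac12\dpt\Norm{c\nabla\uht}{L^2}^2$ integrated over time, which telescopes to the desired $\tfrac12\Norm{c\nabla\uht}{L^2(\Sn)^d}^2$ boundary contribution. For the kinetic part I would use \eqref{eq:space-time-formulation-perturbed-1} tested with a suitable discrete function to replace $\dpt\uht$ by $\vht - \Upsilon$ in the appropriate inner product, so that $(\dpt\vht, \dpt\uht)_Q$ becomes $(\dpt\vht, \vht)_Q$ plus a perturbation term, and $(\dpt\vht,\vht)_Q = \tfrac12\dpt\Norm{\vht}{L^2}^2$ telescopes to $\tfrac12\Norm{\vht}{L^2(\Sn)}^2$. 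Here the mismatch between $\dpt\uht$ and $\vht$ is precisely where \eqref{eq:space-time-formulation-perturbed-1}, i.e.\ the relation $\Pi_{q-1}^t$-projection of $\dpt\uht$ equals the projection of $\vht - \Upsilon$, must be invoked, and this is the point where the orthogonal projection $\Pi_{q-1}^t$ and its stability (Lemma~\ref{lemma:stab-pi-time}) enter.

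After assembling the energy identity, the right-hand side will contain the forcing term $(f, \dpt\uht)_Q$ and a perturbation term involving $c^2\nabla\Upsilon$. I would bound these by pulling out the time-$L^\infty$ norm of $\vht$ (respectively $c\nabla\uht$) and pairing it against the $L^1$-in-time norm of $f$ (respectively $c\nabla\Upsilon$), using the projection stability of $\Pi_{q-1}^t$ to pass from the projected test function back to $\vht$ itself and picking up the constant $\CS$; crucially the $L^\infty$--$L^1$ Hölder pairing in time is what allows the weakened regularity $f\in L^1(0,T;L^2(\Omega))$. The choice of initial conditions \eqref{eq:discrete-initial} via $\Rho u_0$ and $\Pih v_0$ is what makes the initial contributions collapse exactly to $\tfrac12(\Norm{v_0}{L^2(\Omega)}^2 + \Norm{c\nabla u_0}{L^2(\Omega)^d}^2)$: the Ritz projection reproduces the $c^2\nabla$-energy of $u_0$, and the $L^2$ projection reproduces $\Norm{v_0}{L^2}$ after the inner product structure is used. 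The main obstacle I anticipate is the bookkeeping in step two: correctly accounting for the degree drop between trial and test spaces so that $\dpt\uht$ and $\dpt\vht$ are legitimate test functions, and tracking the perturbation $\Upsilon$ through the projection so that it lands in the stated $c\nabla\Upsilon$ term with the right sign rather than contaminating the clean telescoping energy. The weight functions $\varphi_n$ do not yet appear here — I expect they are reserved for the sharper Theorem~\ref{thm:continuous-dependence}, whereas this weaker partial bound needs only the straightforward unweighted telescoping described above.
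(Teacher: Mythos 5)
Your overall architecture matches the paper's: an energy identity obtained by testing~\eqref{eq:space-time-formulation-perturbed-2} with a discrete function built from the solution itself, telescoping in time, an $L^1$--$L^\infty$ H\"older pairing in time with the stability constant~$\CS$ of~$\Pi_{q-1}^t$ from Lemma~\ref{lemma:stab-pi-time}, collapse of the initial terms via the stability of~$\Pih$ and~$\Rho$, and (correctly) no weights~$\varphi_n$ at this stage. However, your specific choice~$\wht = \dpt \uht$ creates a genuine gap once~$\Upsilon \ne 0$. Equation~\eqref{eq:space-time-formulation-perturbed-1} is posed in the $c^2$-weighted \emph{gradient} inner product, so what it actually yields is not ``$\dpt\uht$ equals the projection of $\vht - \Upsilon$'' but rather~$\dpt \uht = \Pi_{q-1}^t \vht - \Pi_{q-1}^t \Rho\, \Upsilon$, where a spatial Ritz-type projection of~$\Upsilon$ necessarily enters because~$\Upsilon \in L^2(0,T;H^1(\Omega))$ is not discrete in space. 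Substituting this into your kinetic term gives~$(\dpt \vht, \dpt \uht)_{\QT} = (\dpt \vht, \vht)_{\QT} - (\dpt \vht, \Pi_{q-1}^t \Rho\, \Upsilon)_{\QT}$: the perturbation now lands paired against~$\dpt \vht$, a quantity controlled by \emph{no} term on the right-hand side of~\eqref{eq:weak-partial-bound}. Bounding it would require either an inverse estimate in time (introducing factors~$\tau_n^{-1}$ and destroying the robustness that is the point of the result) or integration by parts in time (producing nodal jump terms and a time derivative of the projected perturbation, i.e.\ regularity of~$\Upsilon$ the proposition does not assume). A similar stray term~$-(f, \Pi_{q-1}^t \Rho\, \Upsilon)_{\QT}$ also contaminates your source term. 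Note that for~$\Upsilon = 0$ one has~$\dpt \uht = \Pi_{q-1}^t \vht$, so your test function coincides with the paper's and your argument does prove the unperturbed stability bound; the flaw appears exactly when~$\Upsilon \ne 0$, which is the entire purpose of the perturbed formulation in the error analysis of Section~\ref{sec:convergence-analysis}.

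The paper avoids this by taking~$\wht = \Pi_{q-1}^t \vht$ in~\eqref{eq:space-time-formulation-perturbed-2}: then the kinetic term is \emph{exactly}~$(\dpt \vht, \vht)_{\QT}$ with no perturbation (orthogonality removes the projection, since~$\dpt \vht$ has degree~$q-1$ in time on each slab), and the perturbation is instead routed through the gradient cross term, by moving the time projection onto~$\uht$ and invoking~\eqref{eq:space-time-formulation-perturbed-1} with~$\zht = \Pi_{q-1}^t \uht \in \Wht$. This produces the telescoping gradient energy plus the term~$(\Pi_{q-1}^t (c \nabla \Upsilon), c \nabla \uht)_{\QT}$, which is of precisely the benign form~$\CS \Norm{c \nabla \Upsilon}{L^1(0, \tn; L^2(\Omega)^d)} \Norm{c \nabla \uht}{L^{\infty}(0, \tn; L^2(\Omega)^d)}$ appearing in~\eqref{eq:weak-partial-bound}. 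This is also the content of Remark~\ref{rem:upsilon}: the gradient form of the first equation was chosen exactly so that the perturbation enters the estimate against~$c \nabla \uht$ rather than through projections of~$\Upsilon$ in norms where their stability is problematic. Your proof is repaired by swapping your test function for~$\Pi_{q-1}^t \vht$ and adding this dual step with~$\zht = \Pi_{q-1}^t \uht$.
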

\begin{proof}
Without loss of generality, we prove the result for the case~$n = N$.

We choose~$\wht = \Pi_{q-1}^t \vht \in \Wht$ in~\eqref{eq:space-time-formulation-perturbed-2} and obtain the following identity:
\begin{alignat}{3}
\label{eq:aux-identity-weak-bound}
(\dpt \vht, \Pi_{q-1}^t \vht)_{\QT} + (c^2 \nabla \uht, \nabla \Pi_{q-1}^t \vht)_{\QT} & = (f, \Pi_{q-1}^t \vht)_{\QT}.
\end{alignat}

We first consider the terms on the left-hand side of~\eqref{eq:aux-identity-weak-bound}. Using the properties of the~$L^2(0, T)$-orthogonal projection~$\Pi_{q-1}^t$, the continuity in time of~$\vht$, and the choice of the discrete initial conditions in~\eqref{eq:discrete-initial}, we have
\begin{equation}
\label{eq:identity-dpt}
(\dpt \vht, \Pi_{q-1}^t \vht)_{\QT} = (\dpt \vht, \vht)_{\QT} = \frac12 \big(\Norm{\vht}{L^2(\ST)}^2 - \Norm{\Pih v_0}{L^2(\Omega)}^2 \big).
\end{equation}

As for the second term on the left-hand side of~\eqref{eq:aux-identity-weak-bound}, we use~\eqref{eq:space-time-formulation-perturbed-1}, the commutativity of the spatial gradient~$\nabla$ and the first-order time derivative~$\dpt$, the continuity in time of~$\uht$, and the choice of the discrete initial conditions in~\eqref{eq:discrete-initial} to obtain
\begin{alignat}{3}
\nonumber
(c^2 \nabla \uht, \nabla \Pi_{q-1}^t \vht)_{\QT} & = (\nabla \Pi_{q-1}^t \uht, c^2 \nabla \vht)_{\QT} \\
\nonumber
& = (\nabla \Pi_{q-1}^t \uht, c^2 \nabla \dpt \uht)_{\QT} + (\nabla \Pi_{q-1}^t \uht, c^2 \nabla \Upsilon)_{\QT} \\
\nonumber
& = (\nabla \uht, c^2 \nabla \dpt \uht)_{\QT} + (c \nabla \uht, c \Pi_{q-1}^t \nabla \Upsilon)_{\QT} \\
\label{eq:identity-grad-grad}
& = \frac12 \big(\Norm{c \nabla \uht}{L^2(\ST)^d}^2 - \Norm{c \nabla \Rho u_0}{L^2(\Omega)^d}^2 \big) + (\Pi_{q-1}^t (c \nabla \Upsilon), c \nabla \uht)_{\QT} .
\end{alignat}

Inserting identities~\eqref{eq:identity-dpt} and~\eqref{eq:identity-grad-grad} in~\eqref{eq:aux-identity-weak-bound}, and using the stability properties of~$\Pih$ and~$\Rho$, the H\"older inequality, and Lemma~\ref{lemma:stab-pi-time}, we get
\begin{alignat*}{3}
\frac12 \big( \Norm{\vht}{L^2(\ST)}^2 & + \Norm{c\nabla \uht}{L^2(\ST)^d}^2\big)\\
& = \frac12 \big(\Norm{\Pih v_0}{L^2(\Omega)}^2 + \Norm{c \nabla \Rho u_0}{L^2(\Omega)^d}^2 \big)  + (f, \Pi_{q-1}^t \vht)_{\QT} - (\Pi_{q-1}^t (c\nabla \Upsilon), c \nabla \uht)_{\QT} \\
& \le \frac12 \big(\Norm{v_0}{L^2(\Omega)}^2 + \Norm{c \nabla u_0}{L^2(\Omega)^d}^2 \big) + \Norm{f}{L^1(0, T; L^2(\Omega))} \Norm{\Pi_{q-1}^t \vht}{L^{\infty}(0, T; L^2(\Omega))} \\
& \quad + \Norm{\Pi_{q-1}^t (c\nabla \Upsilon) }{L^1(0, T; L^2(\Omega)^d)} \Norm{c \nabla \uht}{L^{\infty}(0, T; L^2(\Omega)^d)} \\
& \le \frac12 \big(\Norm{v_0}{L^2(\Omega)}^2 + \Norm{c \nabla u_0}{L^2(\Omega)^d}^2 \big) + \CS \Norm{f}{L^1(0, T: L^2(\Omega))} \Norm{\vht}{L^{\infty}(0, T; L^2(\Omega))} \\
& \quad + \CS \Norm{c \nabla \Upsilon}{L^1(0, T; L^2(\Omega)^d)} \Norm{c \nabla \uht}{L^{\infty}(0, T; L^2(\Omega)^d)},
\end{alignat*}
where~$\CS$ is the constant in Lemma~\ref{lemma:stab-pi-time}. This completes the proof of~\eqref{eq:weak-partial-bound}.
\end{proof}

We are now in a position to prove the main result in this section.

\begin{theorem}[Continuous dependence on the data]
\label{thm:continuous-dependence}
Let~$(\uht, \vht) \in \Vhto \times \Vhto$ be a solution to the discrete space--time formulation~\eqref{eq:space-time-formulation-perturbed} with discrete initial conditions given by~\eqref{eq:discrete-initial}. Then, the following bound holds:
\begin{equation}
\label{eq:continuous-dependence-perturbed}
\begin{split}
& \frac12 \Big(\Norm{\vht}{C^0([0, T]; L^2(\Omega))}^2 + \Norm{c \nabla \uht}{C^0([0, T]; L^2(\Omega)^d)}^2 \Big) \\
& \qquad \qquad \lesssim  \frac12 \big(\Norm{v_0}{L^2(\Omega)}^2 + \Norm{c \nabla u_0}{L^2(\Omega)^d}^2 \big) + \Norm{f}{L^1(0, T; L^2(\Omega))}^2 + \Norm{c\nabla \Upsilon}{L^1(0, T; L^2(\Omega)^d)}^2,
\end{split}
\end{equation}
where the hidden constant depends only on~$q$.
\end{theorem}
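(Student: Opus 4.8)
The plan is to upgrade the nodal (``weak partial'') bound of Proposition~\ref{prop:weak-continuous-wave} into a genuine $C^0$-in-time bound by testing the scheme against \emph{weighted} discrete test functions built from the linear weights $\varphi_m$, and then to strip the solution-dependent factors off the right-hand side by a Young-type absorption. Throughout I write $E(t) := \tfrac12\big(\Norm{\vht(\cdot, t)}{L^2(\Omega)}^2 + \Norm{c \nabla \uht(\cdot, t)}{L^2(\Omega)^d}^2\big)$, so that the left-hand side of~\eqref{eq:continuous-dependence-perturbed} is, up to a factor~$2$, $\sup_{t \in [0, T]} E(t)$, and I fix an arbitrary interval $I_m$ on which I aim to control $\sup_{t \in I_m} E(t)$. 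The obstruction is transparent: the energy identity behind Proposition~\ref{prop:weak-continuous-wave} controls $E$ only at the nodes $\{t_n\}$, because the test function $\Pi_{q-1}^t \vht$ produces a perfect time derivative whose contribution telescopes to the interval endpoints; to reach the \emph{interior} of $I_m$ I need a test function whose time derivative has a definite sign.

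First I would test~\eqref{eq:space-time-formulation-perturbed-2} with $\Pi_{q-1}^t(\varphi_m \vht)$ and~\eqref{eq:space-time-formulation-perturbed-1} with $\Pi_{q-1}^t(\varphi_m \uht)$, both localized to $I_m$ (legitimate since $\Wht$ is discontinuous in time). As $\dpt \vht$ and $\nabla \dpt \uht$ are of degree $q-1$ in time, the projection acts trivially against them, and integration by parts in time yields, for each unknown, a boundary contribution $\tfrac14 \Norm{\cdot}{L^2(\Sigma_m)}^2 - \tfrac12 \Norm{\cdot}{L^2(\Sigma_{m-1})}^2$ together with the crucial \emph{positive bulk term} $\tfrac{\lambda_m}{2}\Norm{\cdot}{L^2(Q_m)}^2$, which appears precisely because $\varphi_m' \equiv -\lambda_m < 0$ by~\eqref{eq:uniform-bound-varphi}. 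The inverse estimate~\eqref{eq:L-infty-L2} then converts these bulk terms into control of $\Norm{\vht}{L^\infty(I_m; L^2(\Omega))}^2$ and $\Norm{c \nabla \uht}{L^\infty(I_m; L^2(\Omega)^d)}^2$; the endpoint terms at $t_{m-1}$ are bounded by $E(t_{m-1})$, already controlled through Proposition~\ref{prop:weak-continuous-wave}; and the right-hand sides generate the data pairings $\Norm{f}{L^1(I_m; L^2(\Omega))}\Norm{\vht}{L^\infty(I_m; L^2(\Omega))}$ and $\Norm{c \nabla \Upsilon}{L^1(I_m; L^2(\Omega)^d)}\Norm{c \nabla \uht}{L^\infty(I_m; L^2(\Omega)^d)}$, using the $L^\infty$-stability of $\Pi_{q-1}^t$ (Lemma~\ref{lemma:stab-pi-time}) and $\varphi_m \le 1$.

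I expect the main obstacle to be the two mixed gradient terms $(c^2 \nabla \uht, \nabla \Pi_{q-1}^t(\varphi_m \vht))_{Q_m}$ and $(c^2 \nabla \vht, \nabla \Pi_{q-1}^t(\varphi_m \uht))_{Q_m}$. In the unweighted setting of Proposition~\ref{prop:weak-continuous-wave} these cancel exactly, but with the weight inserted their difference equals the commutator $\big(c^2\,[\Pi_{q-1}^t, \varphi_m]\nabla \vht,\ \nabla \uht\big)_{Q_m}$, which does \emph{not} vanish and, worse, involves $\nabla \vht$ --- a quantity the energy does not control. The resolution I would pursue is to rewrite this commutator, via the self-adjointness of $\Pi_{q-1}^t$ in time, as a pairing against $[\Pi_{q-1}^t, \varphi_m]\nabla \uht$, split off its degree-$(q-1)$ part (which, being a discrete gradient, may be fed back into~\eqref{eq:space-time-formulation-perturbed-1} to trade $\nabla \vht$ for $\nabla \dpt \uht + \nabla \Upsilon$), and estimate the remaining top-degree part by an inverse estimate in time. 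Every residual term is then of the form $\tau_m^{-1}\Norm{c \nabla \uht}{L^2(Q_m)}^2$ or a data pairing, i.e.\ of the same order as the bulk; the specific normalization $\lambda_m = \tfrac{1}{2\tau_m}$, which keeps $\varphi_m$ bounded below by $\tfrac12$ rather than letting it vanish, is what forces the commutator constant strictly below the available bulk and hence makes it absorbable. Verifying this inequality between $q$-dependent constants is the delicate, computational heart of the argument.

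Finally, having secured $\sup_{t \in I_m} E(t) \lesssim E(t_{m-1}) + \Norm{f}{L^1(I_m; L^2(\Omega))}\Norm{\vht}{L^\infty(I_m; L^2(\Omega))} + \Norm{c \nabla \Upsilon}{L^1(I_m; L^2(\Omega)^d)}\Norm{c \nabla \uht}{L^\infty(I_m; L^2(\Omega)^d)}$ on each interval, I would combine it with Proposition~\ref{prop:weak-continuous-wave}, take the supremum over all nodes and intervals, bound the $L^1(I_m)$ data norms by their counterparts over $(0, T)$, and bound the $L^\infty(I_m)$ solution norms by $(2\sup_t E)^{1/2}$. Writing $M := \sup_{t \in [0, T]} E(t)$ and $D := \Norm{f}{L^1(0, T; L^2(\Omega))} + \Norm{c \nabla \Upsilon}{L^1(0, T; L^2(\Omega)^d)}$, this leaves an inequality of the shape $M \lesssim E(0) + D\, M^{1/2}$, and a single use of Young's inequality absorbs the $M^{1/2}$ factor into the left-hand side to give $M \lesssim E(0) + D^2$ with a constant depending only on $q$. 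Since $E(0) = \tfrac12(\Norm{v_0}{L^2(\Omega)}^2 + \Norm{c \nabla u_0}{L^2(\Omega)^d}^2)$ by the choice~\eqref{eq:discrete-initial} together with the stability of $\Pih$ and $\Rho$, and $D^2 \lesssim \Norm{f}{L^1(0, T; L^2(\Omega))}^2 + \Norm{c \nabla \Upsilon}{L^1(0, T; L^2(\Omega)^d)}^2$, this is exactly~\eqref{eq:continuous-dependence-perturbed}.
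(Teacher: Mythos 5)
Your global architecture coincides with the paper's: localized weighted test functions built from $\varphi_n$, the positive bulk term $\tfrac{\lambda_n}{2}\Norm{\cdot}{L^2(\Qn)}^2$ generated by $\varphi_n' = -\lambda_n$, the inverse estimate~\eqref{eq:L-infty-L2} to convert that bulk into $L^{\infty}$-in-time control, Proposition~\ref{prop:weak-continuous-wave} for the energy at the left endpoint, and a final Young absorption of the $M^{1/2}$ factor. The genuine gap sits exactly at the step you yourself flag as the ``computational heart''. The paper's test function is $\wht^{\star, n} = \Pi_{q-1}^t(\varphi_n \Pi_{q-1}^t \vht)$, with an \emph{inner} projection on $\vht$; yours is $\Pi_{q-1}^t(\varphi_m \vht)$. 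The inner projection is not cosmetic: it guarantees that in the mixed gradient term every pairing with $c^2\nabla\vht$ occurs through $\Pi_{q-1}^t(\varphi_n \nabla \Pi_{q-1}^t \uht)$, the gradient of an admissible test function in~$\Wht$, so~\eqref{eq:space-time-formulation-perturbed-1} applies verbatim and $\nabla \vht$ is eliminated before it can do harm. With your choice, self-adjointness of $\Pi_{q-1}^t$ leaves the residual $\big((\Id - \Pi_{q-1}^t)(\varphi_m \Pi_{q-1}^t \nabla \uht),\, c^2 (\Id - \Pi_{q-1}^t)\nabla \vht\big)_{Q_m}$, a pairing against the top temporal Legendre mode of $\nabla \vht$. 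That mode is invisible to~\eqref{eq:space-time-formulation-perturbed-1}, which constrains only $\Pi_{q-1}^t \nabla \vht$, and it is not controlled by the energy, which sees $\vht$ only in $L^2(\Omega)$. Your proposed cure --- ``estimate the remaining top-degree part by an inverse estimate in time'' --- cannot work: no temporal inverse estimate removes a spatial gradient from $\vht$, and the only available route, a spatial inverse estimate, reintroduces precisely the CFL-type restriction $\tau \lesssim h$ that the theorem is designed to avoid.

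Even the benign half of your commutator (the degree-$(q-1)$ part that can legitimately be fed back through~\eqref{eq:space-time-formulation-perturbed-1}, producing terms in $c^2\nabla\dpt\uht$) is not absorbable in the way you claim: the bulk coefficient is $\tfrac{\lambda_m}{2} = \tfrac{1}{4\tau_m}$, independent of $q$, while the temporal inverse (Markov) constants grow like $q^2$, so the ``inequality between $q$-dependent constants'' you defer does not hold for general $q \geq 1$. The paper sidesteps absorption altogether: with the doubly projected test function, the two residuals ($J_2$ and $K_2$ in the paper's notation) are computed \emph{exactly} via the Legendre expansion $(\Id - \Pi_{q-1}^t)\vht = \alpha_q L_q$ and the identity $\int_{\tnmo}^{\tn} (t - \tnmo) L_q L_q' \dt = \tau_n q/(2q+1) > 0$ (see~\eqref{eq:J2}), hence they are nonnegative and simply dropped --- no smallness relative to the bulk is needed, and the hidden constant depends on $q$ only through $\CS$ and $\Cinv$. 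Your endgame (maximizing slab, $M \lesssim E(0) + D M^{1/2}$, Young, and the identification of $E(0)$ via the stability of $\Pih$ and $\Rho$ under the choice~\eqref{eq:discrete-initial}) matches the paper and is sound; to repair the proof, replace $\varphi_m \vht$ by $\varphi_m \Pi_{q-1}^t \vht$ inside the outer projection, drop the separate test of~\eqref{eq:space-time-formulation-perturbed-1}, and instead substitute that equation inside the manipulation of the gradient term, after which the exact sign computation becomes available.
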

\begin{proof}

Let~$n \in \{1, \ldots, N\}$. We define the following test function:
\begin{equation*}
\wht^{\star, n} {}_{|_{Q_m}} := \begin{cases}
\Pi_{q-1}^t (\varphi_n \Pi_{q-1}^t \vht) & \text{ if } m = n, \\
0 & \text{ otherwise}.
\end{cases} 
\end{equation*}
Taking~$\wht^{\star, n} \in \Wht$ as the test function in~\eqref{eq:space-time-formulation-perturbed-2}, we get
\begin{alignat}{3}
\label{eq:aux-identity-linear-wave}
(\dpt \vht, \Pi_{q-1}^t (\varphi_n \Pi_{q-1}^t \vht) )_{\Qn} + (c^2 \nabla \uht, \nabla \Pi_{q-1}^t (\varphi_n \Pi_{q-1}^t \vht))_{\Qn} = (f, \Pi_{q-1}^t (\varphi_n \Pi_{q-1}^t \vht))_{\Qn}.
\end{alignat}

\paragraph{Bound on~$(\dpt \vht, \wht^{\star, n})_{\Qn}$.} Using the orthogonality properties of~$\Pi_{q-1}^t$, the first term on the left-hand side of~\eqref{eq:aux-identity-linear-wave} can be split as follows:
\begin{alignat*}{3}
(\dpt \vht, \Pi_{q-1}^t (\varphi_n \Pi_{q-1}^t \vht))_{\Qn} 
&  = (\dpt \vht, \varphi_n \Pi_{q-1}^t \vht)_{\Qn}  \\
& = (\varphi_n \dpt \vht,  \vht)_{\Qn} - (\varphi_n \dpt \vht, (\Id - \Pi_{q-1}^t ) \vht)_{\Qn} \\
& =: J_1 + J_2.
\end{alignat*}
Using the identity
$$\varphi_n w \dpt w = \frac12 \dpt(\varphi_n w^2) - \frac12 \varphi' w^2,$$
and the fact that~$\varphi_n(\tnmo) = 1$, $\varphi_n(\tn) = \frac12$, and~$\varphi' = -\lambda_n$, we have
\begin{alignat}{3}
\nonumber
J_1 = (\varphi_n \dpt \vht, \vht)_{\Qn} & = \frac12 \int_{\Qn} \dpt (\varphi_n \vht)^2 \dV  - \frac12 \int_{\Qn} \varphi_n' \vht^2 \dV \\
\label{eq:J1}
& = \frac{1}{4} \Norm{\vht}{L^2(\Sn)}^2 - \frac12 \Norm{\vht}{L^2(\Sigma_{n - 1})}^2 + \frac{\lambda_n}{2} \Norm{\vht}{L^2(\Qn)}^2.
\end{alignat}

Let now~$\{L_r\}_{r = 0}^q$ denote the Legendre polynomials defined in the time interval~$\In$. Since~$\vht \in \Vhto$, there exists a function~$\alpha_q \in \Vhp$ such that
\begin{subequations}
\begin{alignat}{3}
\label{eq:identity-(I-Pi)-vht}
(\Id - \Pi_{q-1}^t) \vht(\bx, t) & = \alpha_q(\bx) L_q(t),\\
\label{eq:expansion-dpt-vht}
\dpt \vht(\bx, t) & = \dpt \Pi_{q-1}^t \vht(\bx, t) + \alpha_q(\bx) L_q'(t).
\end{alignat}
\end{subequations}
Moreover, since~$\dpt \Pi_{q-1}^t \vht \in \Whtt$, the definition~$\varphi_n(t) = 1 - \lambda_n (t - \tnmo)$, identities~\eqref{eq:identity-(I-Pi)-vht} and~\eqref{eq:expansion-dpt-vht}, and the orthogonality properties of~$\Pi_{q-1}^t$ and the Legendre polynomials~$\{L_r\}_{r = 0}^q$ lead to
\begin{alignat}{3}
\nonumber
J_2 & = -(\varphi_n \dpt \vht,\, (\Id - \Pi_{q-1}^t) \vht)_{\Qn}  = -(\varphi_n \dpt \Pi_{q-1}^t \vht + \alpha_q \varphi_n L_q',\, (\Id - \Pi_{q-1}^t) \vht)_{\Qn} \\
\nonumber
& = \lambda_n (\alpha_q (t - \tnmo) L_q',\, \alpha_q L_q)_{\Qn} 
\\
\nonumber
& = \lambda_n \Norm{\alpha_q}{L^2(\Omega)}^2 \int_{\tnmo}^{\tn} (t - \tnmo) L_q(t) L_q'(t) \dt  \\
\label{eq:J2}
& = \tau_n \lambda_n \Big(\frac{q}{2q + 1}\Big) \Norm{\alpha_q}{L^2(\Omega)}^2 = \frac12 \Big(\frac{q}{2q + 1}\Big) \Norm{\alpha_q}{L^2(\Omega)}^2 \geq 0,
\end{alignat}
where, in the last line, we have used the identities~$\tau_n \lambda_n = 1/2$ and 
\begin{equation*}
\int_{\tnmo}^{\tn} (t - \tnmo) L_q(t) L_q'(t) \dt = \frac{\tau_n q}{2q + 1}.
\end{equation*}

\paragraph{Bound on~$(c^2 \nabla \uht, \nabla \wht^{\star, n})_{\Qn}$.} Using~\eqref{eq:space-time-formulation-perturbed-1} in the definition of the perturbed problem and the orthogonality properties of~$\Pi_{q-1}^t$, the second term on the left-hand side of~\eqref{eq:aux-identity-linear-wave} can be split as follows:
\begin{alignat*}{3}
(c^2 \nabla \uht, \nabla \Pi_{q-1}^t(\varphi_n \Pi_{q-1}^t \vht))_{\Qn} 
& = (c^2 \varphi_n \nabla \Pi_{q-1}^t \uht, \nabla \Pi_{q-1}^t \vht))_{\Qn} \\
& = (\Pi_{q-1}^t(\varphi_n \nabla \Pi_{q-1}^t \uht), c^2 \nabla \vht)_{\Qn} \\
& = (\varphi_n \nabla \Pi_{q-1}^t \uht, c^2 \nabla \dpt \uht)_{\Qn} + (\Pi_{q-1}^t(\varphi_n \nabla \Pi_{q-1}^t \uht), c^2 \nabla \Upsilon)_{\Qn} \\
& = (c^2 \nabla \uht, \nabla \Pi_{q-1}^t (\varphi_n \dpt \uht) )_{\Qn} + (c^2 \nabla \uht, \nabla \Pi_{q-1}^t (\varphi_n \Pi_{q-1}^t \Upsilon) )_{\Qn}  \\
& = (c^2 \varphi_n \nabla \uht, \nabla \dpt \uht)_{\Qn} - (c^2 \nabla \uht, (\Id - \Pi_{q-1}^t) (\varphi_n \nabla \dpt \uht))_{\Qn}  \\
& \quad + (c \nabla \uht, \Pi_{q-1}^t (\varphi_n \Pi_{q-1}^t (c \nabla \Upsilon)) )_{\Qn} \\
& =: K_1 + K_2 + K_3.
\end{alignat*}

Similarly as in identity~\eqref{eq:J1} for~$J_1$, we obtain
\begin{alignat*}{3}
K_1 = (c^2 \varphi_n \nabla \uht, \nabla \dpt \uht)_{\Qn} = \frac{1}{4} \Norm{c \nabla \uht}{L^2(\Sn)^d}^2 - \frac12 \Norm{c \nabla \uht}{L^2(\Sigma_{n - 1})^d}^2 + \frac{\lambda_n}{2} \Norm{c \nabla \uht}{L^2(\Qn)^d}^2.
\end{alignat*}
Moreover, the same reasoning in~\eqref{eq:J2} for~$J_2$ can be used to show that~$K_2 \geq 0$.

Using the H\"older inequality, the stability bound in Lemma~\ref{lemma:stab-pi-time} for~$\Pi_{q-1}^t$, and the uniform bound in~\eqref{eq:uniform-bound-varphi} for~$\varphi_n$, we get
\begin{alignat*}{3}
K_3 = (c \nabla \uht, \Pi_{q-1}^t (\varphi_n \Pi_{q-1}^t (c \nabla \Upsilon)) )_{\Qn}
& \le \Norm{\Pi_{q-1}^t (\varphi_n \Pi_{q-1}^t (c \nabla \Upsilon))}{L^1(\In; L^2(\Omega)^d)} \Norm{c \nabla \uht}{L^{\infty}(\In; L^2(\Omega)^d)} \\
& \le \CS^2 \Norm{c \nabla \Upsilon}{L^1(\In; L^2(\Omega)^d)} \Norm{c \nabla \uht}{L^{\infty}(\In; L^2(\Omega)^d)}.
\end{alignat*}

\paragraph{Bound on~$(f, \wht^{\star, n})_{\Qn}$.}  We now focus on the term in~\eqref{eq:aux-identity-weak-bound} involving the source term~$f$. The H\"older inequality, the stability bound in Lemma~\ref{lemma:stab-pi-time} for~$\Pi_{q-1}^t$, and the uniform bound in~\eqref{eq:uniform-bound-varphi} for~$\varphi_n$ lead to
\begin{alignat*}{3}
(f, \Pi_{q-1}^t(\varphi_n \Pi_{q-1}^t \vht))_{\Qn} & \le \Norm{f}{L^1(\In; L^2(\Omega))} \Norm{\Pi_{q-1}^t (\varphi_n \Pi_{q-1}^t \vht)}{L^{\infty}(\In; L^2(\Omega))} \\
& \le \CS^2 \Norm{f}{L^1(\In; L^2(\Omega))} \Norm{\vht}{L^{\infty}(\In; L^2(\Omega))}.
\end{alignat*}

\paragraph{Conclusion.} Using the inverse estimate~\eqref{eq:L-infty-L2}, we have
\begin{alignat*}{3}
\frac{1}{4(1 + \Cinv)^2} \Norm{\vht}{L^{\infty}(\In; L^2(\Omega))}^2 & \le \frac{\lambda_n}{2}\Norm{\vht}{L^2(\Qn)}^2, \\
\frac{1}{4(1 + \Cinv)^2} \Norm{c \nabla \uht}{L^{\infty}(\In; L^2(\Omega)^d)}^2 & \le \frac{\lambda_n}{2} \Norm{c \nabla \uht}{L^2(\Qn)^d}^2.
\end{alignat*}
Therefore, combining the above estimates, and using Proposition~\ref{prop:weak-continuous-wave} to bound the energy terms at~$\tnmo$, we obtain
\begin{alignat}{3}
\nonumber
\frac{1}{4} & \big(\Norm{\vht}{L^2(\Sn)}^2 + \Norm{c \nabla \uht}{L^2(\Sn)^d}^2 \big) + \frac{1}{4(1 + \Cinv)^2} \big(\Norm{\vht}{L^{\infty}(\In; L^2(\Omega))}^2 + \Norm{c \nabla \uht}{L^{\infty}(\In; L^2(\Omega)^d)}^2 \big) \\
\nonumber
& \le \frac12 \big(\Norm{\vht}{L^2(\Sigma_{n - 1})}^2 + \Norm{c \nabla \uht}{L^2(\Sigma_{n - 1})^d}^2 \big) +  \CS^2 \Norm{f}{L^1(\In; L^2(\Omega))} \Norm{\vht}{L^{\infty}(\In; L^2(\Omega))} \\
\nonumber 
& \quad + \CS^2 \Norm{c \nabla \Upsilon}{L^1(\In; L^2(\Omega)^d)} \Norm{c \nabla \uht}{L^{\infty}(\In; L^2(\Omega)^d)} \\
\nonumber
& \le \frac12\big(\Norm{v_0}{L^2(\Omega)}^2 + \Norm{c \nabla u_0}{L^2(\Omega)^d}^2 \big) \\
\nonumber
& \quad + \CS \Norm{f}{L^1(0, \tnmo; L^2(\Omega))} \Norm{\vht}{L^{\infty}(0, \tnmo; L^2(\Omega))} 
\\
\nonumber
& \quad + \CS^2 \Norm{f}{L^1(\In; L^2(\Omega))} \Norm{\vht}{L^{\infty}(\In; L^2(\Omega))}\\
\nonumber
& \quad + \CS \Norm{c\nabla \Upsilon}{L^1(0, \tnmo; L^2(\Omega)^d))} \Norm{c \nabla \uht }{L^{\infty}(0, \tnmo; L^2(\Omega)^d)} \\
\nonumber 
& \quad + \CS^2 \Norm{c \nabla \Upsilon}{L^1(\In; L^2(\Omega)^d)} \Norm{c \nabla \uht }{L^{\infty}(\In; L^2(\Omega)^d)}\\ 
\nonumber
& \lesssim \frac12 \big(\Norm{v_0}{L^2(\Omega)}^2 + \Norm{c \nabla u_0}{L^2(\Omega)^d}^2  \big) + \Norm{f}{L^1(0, \tn; L^2(\Omega))} \Norm{\vht}{L^{\infty}(0, \tn; L^2(\Omega))} \\
\label{eq:bound-energy-n}
& \quad + \Norm{c \nabla \Upsilon}{L^1(0, \tn; L^2(\Omega)^d)} \Norm{c \nabla \uht}{L^{\infty}(0, T; L^2(\Omega)^d)},
\end{alignat}
where the hidden constant depends only on~$q$.

The desired result then follows by considering the value of~$n$ where the left-hand side of~\eqref{eq:bound-energy-n} takes its maximum value, and using the Young inequality. 
\end{proof}

\begin{remark}[Stability of the space--time FEM~\eqref{eq:space-time-formulation}]
A stability bound on the discrete solution~$(\uht, \vht)$ to the space--time FEM~\eqref{eq:space-time-formulation} can be retrieved by setting~$\Upsilon = 0$ in Theorem~\ref{thm:continuous-dependence}. This implies that the method is stable with no restrictions on the time steps~$\{\tau_n\}_{n = 1}^N$ or additional assumptions on the family of shape-regular meshes~$\{\Th\}_{h > 0}$. In order to obtain an actual continuous dependence on the data result, the particular choice of the discrete initial conditions in~\eqref{eq:discrete-initial} is highly relevant. 

The hidden constant in the statement of Theorem~\ref{thm:continuous-dependence} is independent of the final time~$T$, which significantly improves the exponential growth of the stability constant predicted in the analyses in~\cite{Bales-Lasiecka:1994,Bales-Lasiecka:1995,Zhao-Li:2016,Karakashian_Makridakis:2005}, which results from the use of Gr\"onwall-type estimates. Finally, we require~$f \in L^1(0, T; L^2(\Omega))$ instead of the standard regularity assumption~$f \in L^2(\QT)$.
\eremk
\end{remark}

\begin{remark}[Changing meshes]
The continuity in time enforced in the discrete space~$\Vht$ prevents the use of spatial meshes that change from one time slab to the next one. 
One could add a transfer condition~$\uht(\cdot, \tnmo^+) = \Pi_h^{(n)} \uht(\cdot, \tnmo^-)$ and~$\vht(\cdot, \tnmo^+) = \Pi_h^{(n)} \vht(\cdot, \tnmo^-)$ for some projection~$\Pi_h^{(n)}$ onto the space~$\Pp{q}{\In} \otimes \mathcal{V}_h^{(n), p}$, as done in~\cite{Karakashian_Makridakis:2005} for semilinear wave equations. However, this significantly complicates the stability and convergence analysis.
\eremk
\end{remark}

\begin{remark}[Perturbation term~$\Upsilon$]
\label{rem:upsilon}
From the theoretical point of view, the main difference between equation~\eqref{eq:space-time-formulation-perturbed-1} and
\begin{equation*}
(\vht, \zht) - (\dpt \uht, \zht) = (\Upsilon, \zht) \qquad \forall \zht \in \Wht
\end{equation*}
is that the latter leads to an undesirable term~$\Norm{c \nabla \Pih \Upsilon}{L^1(0, T; L^2(\Omega)^d)}$ in~\eqref{eq:continuous-dependence-perturbed}, which results in stronger conditions on~$\Th$ to ensure stability of~$\Pih$ in the~$H^1(\Omega)$ seminorm (see~\cite{Tantardini_Veeser:2016}), and is difficult to handle in the a priori error analysis.
\eremk
\end{remark}

\begin{remark}[Insights into Neumann boundary conditions]
Our analysis can be adapted to accommodate Neumann boundary conditions $\big(c^2 \nabla u \cdot \bnOmega = g$ on~$\partial \Omega \times (0, T) \big)$. 
Given a Neumann datum~$g \in W^{1,1}(0, T; H^{-1/2}(\partial \Omega)) \hookrightarrow C^0([0, T]; H^{-1/2}(\partial \Omega))$, the resulting space--time formulation reads: find~$\uht \in \Vht$ and~$\vht \in \Vht$ such that
\begin{subequations}
\begin{alignat*}{4}
(\vht, \zht)_{\QT} - (\dpt \uht, \zht)_{\QT} & = 0  & & \qquad \forall \zht \in \Whts,\\
(\dpt \vht, \wht)_{\QT} + (c^2 \nabla \uht, \nabla \wht)_{\QT} & = (f, \wht)_{\QT} + \langle g, \wht\rangle_{\star}
& & \qquad \forall \wht \in \Whts,
\end{alignat*}
\end{subequations}
with discrete initial conditions~$\uht(\cdot, 0) = \calR u_0$ and~$\vht(\cdot, 0) = \Pi _h v_0$, and where~$\langle\cdot, \cdot \rangle_{\star}$ denotes the duality between~$L^1(0, T; H^{-1/2}(\partial \Omega))$ and~$L^{\infty}(0, T; H^{1/2}(\partial \Omega))$.

In this case, the Ritz operator~$\calR : H^1(\Omega) \to \Vhp$ must be defined as follows:
\begin{equation*}
\begin{split}
(c^2 \nabla \calR \varphi, \nabla \zh)_{\Omega} & = (c^2 \nabla \varphi, \nabla \zh)_{\Omega} \qquad \forall \zh \in \Vhp, \\
\int_{\Omega} \calR \varphi \dx & = \int_{\Omega} \varphi \dx,
\end{split}
\end{equation*} 
where the second condition guarantees uniqueness.

Assuming, for simplicity, that~$\int_{\Omega} u_0 \dx = 0$, and proceeding as in Theorem~\ref{thm:continuous-dependence}, one can obtain the following stability estimate (cf. \cite[Thm.~4.5]{Walkington:2014}):
\begin{equation*}
\begin{split}
    \frac12 \Big(\Norm{\vht}{C^0([0, T]; L^2(\Omega))}^2 & + \Norm{c \nabla \uht}{C^0([0, T]; L^2(\Omega)^d)}^2 \Big) \\
    & \lesssim \frac12 \Big(\Norm{v_0}{L^2(\Omega)}^2 + \Norm{c \nabla u_0}{L^2(\Omega)^d}^2 \Big) \\
    & \quad + \Norm{f}{L^1(0, T; L^2(\Omega))}^2 + \Norm{g}{C^0([0, T]; H^{-1/2}(\partial \Omega))}^2 + \Norm{\dpt g}{L^1(0, T; H^{-1/2}(\partial \Omega))}^2,
\end{split}
\end{equation*}
where the terms involving~$g$ must be carefully treated. For instance, using the identity~$\Pi_{q-1}^t \vht = \dpt \uht$ (see Remark~\ref{rem:equivalent-formulations}), integration by parts in time, and the H\"older inequality, we have
\begin{equation*}
\begin{split}
\langle g, \Pi_{q-1}^t \vht \rangle_{\star} = \langle g, \dpt \uht \rangle_{\star} & = \langle g(\cdot, T), \uht(\cdot, T)\rangle - \langle g(\cdot, 0), \uht(\cdot, 0)\rangle - \langle \dpt g, \uht \rangle_{\star} \\ 
& \le \big( 2\Norm{g}{C^0([0, T]; H^{-1/2}(\partial \Omega))} + \Norm{\dpt g}{L^1(0, T; H^{-1/2}(\partial \Omega))}\big) \Norm{\uht}{C^0([0, T]; H^{1/2}(\Omega))},
\end{split}
\end{equation*}
and the term involving~$\uht$ is estimated using a continuous trace inequality, the representation formula
\begin{equation*}
\uht(\cdot, t) = \Rh u_0 + \int_0^t \dpt \uht(\cdot, s) \ds = \Rh u_0 + \int_0^t \Pi_{q-1}^t \vht(\cdot, s) \ds,
\end{equation*}
and the Poincar\'e inequality, as follows:
\begin{equation*}
\begin{split}
\Norm{\uht}{C^0([0, T]; H^{1/2}(\partial \Omega))} & \lesssim \Norm{\nabla \uht}{C^0([0, T]; L^2(\Omega)^d)}  + \Norm{\uht}{C^0([0, T]; L^2(\Omega))} \\
& \lesssim \Norm{c \nabla \uht}{C^0([0, T]; L^2(\Omega)^d)}  + \Norm{c \nabla \Rh u_0}{L^2(\Omega)^d} + T \Norm{\vht}{C^0([0, T]; L^2(\Omega))},
\end{split}
\end{equation*}
where the hidden constant now depends on the degree of approximation in time~$q$, the domain~$\Omega$, and the constants in the bound~$c_\star < c(\bx) < c^{\star}$. 
\eremk
\end{remark}

\section{Convergence analysis\label{sec:convergence-analysis}}
In this section, we derive~$(h, \tau)$-\emph{a priori} error estimates for the space--time FEM~\eqref{eq:space-time-formulation} in some~$C^0([0, T]; X)$-type norms. To do so, we first introduce some auxiliary projections in space and time in Section~\ref{sec:preliminary-a-priori}. The choice of the discrete initial conditions and the discrete liftings for nonhomogeneous Dirichlet boundary conditions is discussed in Section~\ref{sec:discrete-data}.
Our main convergence results are presented in Section~\ref{sec:a-priori}. Finally, the properties of a postprocessed approximation of~$u$ are discussed in Section~\ref{sec:postprocessing}, which are instrumental in the~\emph{a posteriori} error estimate in Section~\ref{sec:posteriori}.

\subsection{Preliminary results for the convergence analysis\label{sec:preliminary-a-priori}}
We introduce some auxiliary projections in space and time, as well as their properties that we use later in the convergence analysis.
\subsubsection{Properties of the time projection~\texorpdfstring{$\Pt$}{Pt}\label{sec:Pt}}
We first recall the definition of the auxiliary projection in~\cite[Eq.~(2.9)]{Aziz_Monk:1989}, and its stability and approximation properties.
\begin{definition}[Projection~$\Pt$]
Let~$q \in \IN$ with~$q \geq 1$. Given a partition~$\Tt$ of the time interval~$(0, T)$, the projection operator~$\Pt : H^1(0, T) \to \Vtq$ is defined for any~$w \in H^1(0, T)$ as the only element in~$\Vtq$ such that
\begin{subequations}
\begin{alignat}{3}
\label{def:Pt-1}
\Pt w(0) & = w(0),\\
\label{def:Pt-2}
((\Pt w)' - w', p_{q-1})_{\In}  & = 0 \qquad \qquad \forall p_{q-1} \in \Pp{q-1}{\In}, \text{ for } n = 1, \ldots, N.
\end{alignat}
\end{subequations}
\end{definition}

We now derive an equivalent (local) definition of~$\Pt$. Taking~$p_{q-1}$ in~\eqref{def:Pt-2} as the characteristic function~$\chi_{I_1}$ and using~\eqref{def:Pt-1}, it can be deduced that~$\Pt w(t_1) = w(t_1)$. Recursively, one can then prove that 
\begin{equation}
\label{eq:pointwise-identity-Pt}
\Pt w (\tn) = w(\tn) \quad \text{ for } n = 0, \ldots, N.
\end{equation}
Moreover, using~\eqref{eq:pointwise-identity-Pt} and integrating by parts in time in~\eqref{def:Pt-2}, we have
\begin{equation*}
\begin{split}
0 = ((\Pt w)' - w', p_{q-1})_{\In} & = (\Pt w - w)(\tn) p_{q-1}(\tn) - (\Pt(w) - w)(\tnmo) p_{q-1}(\tnmo) \\
& \quad - (\Pt w - w, p_{q-1}')_{\In} \\
& = - (\Pt w - w, p_{q-1}')_{\In},
\end{split}
\end{equation*}
whence, the projection~$\Pt$ is uniquely determined on each element~$\In \in \Tt$ as follows:
\begin{subequations}
\label{def:local-Pt}
\begin{alignat}{3}
\label{def:local-Pt-1}
\Pt w (\tnmo) & = w(\tnmo), \\
\label{def:local-Pt-2}
\Pt w (\tn) & = w(\tn), \\
\label{def:local-Pt-3}
(\Pt w - w, p_{q-2})_{\In} & = 0 \qquad \qquad \forall p_{q-2} \in \Pp{q-2}{\In}.
\end{alignat}
\end{subequations}
We now use this local definition to derive a stability bound on~$\Pt$ in the~$C^0[0, T]$ norm.

\begin{lemma}[Stability of~$\Pt$]
\label{lemma:stab-Pt}
For any~$w \in H^1(0, T)$, it holds
\begin{equation*}
\Norm{\Pt w}{C^0[0, T]} \lesssim \Norm{w}{C^0[0, T]}.
\end{equation*}
\end{lemma}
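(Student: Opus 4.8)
The plan is to establish the stability bound $\Norm{\Pt w}{C^0[0,T]} \lesssim \Norm{w}{C^0[0,T]}$ locally, element by element, using the local characterization~\eqref{def:local-Pt} of the projection. Since the $C^0[0,T]$ norm is the maximum of the local $L^\infty(\In)$ norms over all time intervals, it suffices to show that on each interval $\In$ we have $\Norm{\Pt w}{L^\infty(\In)} \lesssim \Norm{w}{L^\infty(\In)}$ with a constant independent of $n$ and $\tau$.

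First I would reduce the problem to a fixed reference interval. By an affine change of variables mapping $\In$ to $\widehat{I} = (0,1)$, the defining conditions~\eqref{def:local-Pt-1}--\eqref{def:local-Pt-3} are preserved in form: the transformed projection $\widehat{\Pt w}$ interpolates $\widehat{w}$ at the two endpoints and is $L^2(\widehat{I})$-orthogonal to $\Pp{q-2}{\widehat{I}}$ against the difference $\widehat{\Pt w} - \widehat{w}$. Crucially, both the interpolation conditions and the $L^2$-orthogonality condition are invariant under affine scaling (the Jacobian factors cancel in the orthogonality relation), so the reference projection $\widehat{\Pt}$ is a single fixed linear operator on $\widehat{I}$ that does not depend on $\tau_n$.

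Next I would argue that $\widehat{\Pt}$ is a bounded linear operator from $C^0[\widehat{I}]$ into $\Pp{q}{\widehat{I}}$ equipped with the $L^\infty(\widehat{I})$ norm. This is a finite-dimensional statement: $\widehat{\Pt}$ maps into the $(q+1)$-dimensional space $\Pp{q}{\widehat{I}}$, and its action is determined by the two nodal values $\widehat{w}(0), \widehat{w}(1)$ together with $q-1$ moments $(\widehat{w}, p)_{\widehat{I}}$ for a basis $\{p\}$ of $\Pp{q-2}{\widehat{I}}$; each of these functionals is bounded on $C^0[\widehat{I}]$. Since $\widehat{\Pt}$ is a well-defined linear map between finite-dimensional spaces (well-definedness and uniqueness follow from the Aziz--Monk construction, already invoked in the definition), it is automatically continuous, and there exists a constant $\widehat{C}$ depending only on $q$ with $\Norm{\widehat{\Pt} \widehat{w}}{L^\infty(\widehat{I})} \le \widehat{C} \Norm{\widehat{w}}{L^\infty(\widehat{I})}$. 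Scaling back to $\In$ and noting that the $L^\infty$ norm is invariant under the affine pullback, I obtain $\Norm{\Pt w}{L^\infty(\In)} \le \widehat{C} \Norm{w}{L^\infty(\In)}$, and taking the maximum over $n$ yields the claim.

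The main obstacle is making the scaling-invariance argument clean: one must verify that the constant $\widehat{C}$ truly depends only on $q$ and not on $\tau_n$, which hinges on the observation that the defining conditions transform homogeneously under the affine map so that the reference operator is genuinely $n$-independent. A subtle point worth stating carefully is that the orthogonality in~\eqref{def:local-Pt-3} tests against $\Pp{q-2}{\In}$, whose pullback is exactly $\Pp{q-2}{\widehat{I}}$; this is what allows the reference operator to be fixed. Once scaling invariance is secured, the continuity of a linear map on finite-dimensional spaces is automatic and requires no quantitative estimate, so the remainder is routine.
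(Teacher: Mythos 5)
Your proof is correct, but it takes a genuinely different route from the paper's. You argue by affine scaling: after pulling back to the reference interval $\widehat{I}=(0,1)$, the local conditions~\eqref{def:local-Pt-1}--\eqref{def:local-Pt-3} are invariant (the Jacobian cancels in the orthogonality relation, and the pullback of $\Pp{q-2}{\In}$ is $\Pp{q-2}{\widehat{I}}$), so the reference operator $\widehat{\mathcal{P}}$ is a single fixed linear map; since it factors through the bounded functionals $\widehat{w}\mapsto(\widehat{w}(0),\widehat{w}(1),(\widehat{w},p)_{\widehat{I}})$ followed by a linear reconstruction into the finite-dimensional space $\Pp{q}{\widehat{I}}$, it is automatically bounded on $C^0(\overline{\widehat{I}})$ with a constant depending only on $q$, and the $L^\infty$-invariance of the pullback transfers this to every $\In$. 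The paper instead works directly on $\In$ with an explicit representation: from~\eqref{def:local-Pt-3} it deduces $\Pi_{q-2}^t \Pt w = \Pi_{q-2}^t w$, writes $\Pt w = \Pi_{q-2}^t w + \alpha L_{q-1} + \beta L_q$ in the Legendre basis, solves for $\alpha,\beta$ explicitly in terms of the endpoint deviations of $(\Id-\Pi_{q-2}^t)w$ using $L_s(\tnmo)=(-1)^s$, $L_s(\tn)=1$, and concludes via $\Norm{L_s}{L^{\infty}(\In)}=1$ together with the $L^\infty$-stability of $\Pi_{q-2}^t$ from Lemma~\ref{lemma:stab-pi-time}. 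Your soft argument is shorter and sidesteps the Legendre computation, at the price of a purely existential constant: the paper's route exposes the structure of the constant (reducing everything to $\CS$ and unit-size Legendre values), which is the more useful form if one wants $q$-explicit bounds in the spirit of the paper's remark on $(p,q)$-explicit estimates. Two points you handled correctly but should keep stated explicitly in a write-up: well-posedness of the local interpolation-plus-moments problem (which the paper secures beforehand by deriving~\eqref{def:local-Pt} from the Aziz--Monk definition, and which can also be checked directly by the factorization $p=(t-\tnmo)(\tn-t)r$ for a degree-$q$ polynomial with vanishing endpoint values), and the fact that pointwise values of $w$ make sense because $H^1(0,T)\hookrightarrow C^0[0,T]$ in one dimension.
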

\begin{proof}
Let~$w \in H^1(0, T)$ and~$n \in \{1, \ldots, N\}$. From~\eqref{def:local-Pt-3}, we can deduce that~$\Pi_{q-2}^t \Pt w = \Pi_{q-2}^t w$, which implies
\begin{equation}
\label{eq:Pt-w-explicit}
\Pt w (t) = \Pi_{q-2}^t w (t) + \alpha L_{q-1}(t) + \beta L_q (t) \qquad \forall t \in \In,
\end{equation}
for some constants~$\alpha$ and~$\beta$. Then, using~\eqref{def:local-Pt-1} and~\eqref{def:local-Pt-2}, and the fact that the Legendre polynomials satisfy~$L_{s}(\tnmo) = (-1)^s$ and~$L_s(\tn) = 1$ for all~$s \in \IN$, it is possible to obtain the following explicit expressions for~$\alpha$ and~$\beta$:
\begin{equation}
\label{eq:alpha-beta}
\alpha = \frac{1}{2 (-1)^q} ((-1)^q \delta_n - \delta_{n - 1}) \quad \text{ and } \quad \beta = \frac{1}{2 (-1)^q} ((-1)^q \delta_n + \delta_{n - 1}),
\end{equation}
where~$\delta_{n-1} = (w - \Pi_{q-2}^t w)(\tnmo^+)$ and~$\delta_{n} = (w - \Pi_{q-2}^t w)(\tn^-)$. Inserting~\eqref{eq:alpha-beta} in~\eqref{eq:Pt-w-explicit}, and due to the uniform bound~$\Norm{L_s}{L^{\infty}(\In)} = 1$ for all~$s \in \IN$, we get
\begin{equation*}
\Norm{\Pt w}{L^{\infty}(\In)} \lesssim \Norm{\Pi_{q-2}^t w}{L^{\infty}(\In)} + \Norm{(\Id - \Pi_{q-2}^t) w}{L^{\infty}(\In)}.
\end{equation*}
The result then follows by using the~$L^{\infty}(\In)$ stability in Lemma~\ref{lemma:stab-pi-time} of~$\Pi_{q-2}^t$, the continuity of~$w$ and~$\Pt w$, and taking~$n$ as the index where the left-hand side takes its maximum value.
\end{proof}

\begin{lemma}[Estimates for~$\Pt$]
\label{lemma:estimates-Pt}
Let~$q \in \IN$ with~$q \geq 1$. Given a partition~$\Tt$ of the time interval~$(0, T)$ and~$1 \le s \le \infty$, the following estimate holds for all~$w \in W^{m + 1}_s(0, T)$ with~$1 \le m \le q$:
\begin{equation*}
\Norm{(\Id - \Pt) w}{L^s(0, T)} \lesssim \tau^{m + 1} \SemiNorm{w}{W^{m + 1}_s(0, T)}.
\end{equation*}
\end{lemma}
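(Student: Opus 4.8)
The plan is to reduce the estimate to a single reference interval by an affine scaling argument, prove the bound there via the Bramble--Hilbert lemma, and then rescale and sum over the time intervals.

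First I would localize: since the $L^s(0,T)$ norm decomposes as an $\ell^s$ sum over the intervals $\{\In\}$ (or as a maximum when $s = \infty$), it suffices to estimate $\Norm{(\Id - \Pt) w}{L^s(\In)}$ for each $n$. On each $\In$ I would pass to the reference interval $\widehat I := (0,1)$ through the affine map $F_n(\hat t) := \tnmo + \tau_n \hat t$, writing $\hat w := w \circ F_n$. The key structural observation is that the local characterization of $\Pt$ in~\eqref{def:local-Pt} is affine invariant: the two endpoint interpolation conditions~\eqref{def:local-Pt-1}--\eqref{def:local-Pt-2} and the orthogonality condition~\eqref{def:local-Pt-3} pull back (the Jacobian $\tau_n$ appearing in the latter being irrelevant to an orthogonality relation) to the analogous conditions defining a reference projection $\widehat{\Pt} : H^1(\widehat I) \to \Pp{q}{\widehat I}$, so that $(\Pt w) \circ F_n = \widehat{\Pt} \hat w$.

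Next I would establish the estimate on $\widehat I$. The operator $\Id - \widehat{\Pt}$ is linear and, since $\widehat{\Pt}$ is a projection onto $\Pp{q}{\widehat I}$ and $m \le q$, it annihilates $\Pp{m}{\widehat I}$. Its boundedness from $W^{m+1}_s(\widehat I)$ into $L^s(\widehat I)$ follows from the one-dimensional embedding $W^{m+1}_s(\widehat I) \hookrightarrow C^0(\widehat I)$, valid for all $s \in [1, \infty]$ (which is what makes the endpoint evaluations meaningful), together with the $C^0$ stability of $\widehat{\Pt}$, established exactly as in Lemma~\ref{lemma:stab-Pt} using the $L^\infty$ stability of $\Pi_{q-2}^t$ from Lemma~\ref{lemma:stab-pi-time}. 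The Bramble--Hilbert lemma then yields $\Norm{(\Id - \widehat{\Pt}) \hat w}{L^s(\widehat I)} \lesssim \SemiNorm{\hat w}{W^{m+1}_s(\widehat I)}$, with a constant depending only on $q$, $m$, and $s$.

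Finally I would rescale. The standard change-of-variables identities give $\Norm{(\Id - \Pt) w}{L^s(\In)} = \tau_n^{1/s} \Norm{(\Id - \widehat{\Pt}) \hat w}{L^s(\widehat I)}$ and $\SemiNorm{\hat w}{W^{m+1}_s(\widehat I)} = \tau_n^{m+1 - 1/s} \SemiNorm{w}{W^{m+1}_s(\In)}$ (with the obvious modifications when $s = \infty$), so the reference bound becomes $\Norm{(\Id - \Pt) w}{L^s(\In)} \lesssim \tau_n^{m+1} \SemiNorm{w}{W^{m+1}_s(\In)}$. Summing the $s$-th powers over $n$, bounding each $\tau_n \le \tau$, and taking the $s$-th root (or taking the maximum over $n$ when $s = \infty$) gives the claim. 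I expect the only delicate point to be the careful verification of the hypotheses of the Bramble--Hilbert lemma on $\widehat I$ — specifically the boundedness of $\widehat{\Pt}$ on $W^{m+1}_s(\widehat I)$ uniformly in $s$, including the endpoint case $s = 1$ where the embedding into $C^0$ is least apparent; everything else is routine bookkeeping of scaling factors.
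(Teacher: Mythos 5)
Your proposal is correct, but it follows a genuinely different route from the paper. The paper's proof is essentially two lines: it invokes the equivalent representation~$\Pt w(t) = w(\tnmo) + \int_{\tnmo}^{t} \Pi_{q-1}^t w' \, \ds$ on each~$\In$ (citing Ern--Guermond), so that~$(\Id - \Pt) w (t) = \int_{\tnmo}^{t} (\Id - \Pi_{q-1}^t) w' \, \ds$, and the claimed bound then follows from the standard approximation property of the~$L^2$-orthogonal projection~$\Pi_{q-1}^t$ applied to~$w' \in W^m_s(0,T)$, the extra time integration supplying the additional power of~$\tau$. You instead work directly from the local characterization~\eqref{def:local-Pt}, transport it to a reference interval by affine invariance (correctly noting that the Jacobian is immaterial for the orthogonality condition), and conclude by Bramble--Hilbert, with the required boundedness of the reference projection coming from~$W^{m+1}_s(\widehat I) \hookrightarrow C^0(\widehat I)$ and the~$C^0$-stability argument of Lemma~\ref{lemma:stab-Pt} (via Lemma~\ref{lemma:stab-pi-time}). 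All the steps check out: the localization is legitimate because~\eqref{def:local-Pt} determines~$\Pt w$ on~$\In$ from~$w|_{\In}$ alone; the annihilation of~$\Pp{m}{\widehat I}$ for~$m \le q$ follows from the projection property and uniqueness of the local system; and your scaling exponents~$\tau_n^{1/s}$ and~$\tau_n^{m+1-1/s}$ combine to the correct~$\tau_n^{m+1}$. Your approach is self-contained modulo the Deny--Lions/Bramble--Hilbert lemma and makes the intermediate-regularity case~$m < q$ transparent, whereas the paper's argument is shorter and delegates everything to a known projection estimate. One remark: the ``delicate point'' you flag---uniformity in~$s$ of the Bramble--Hilbert constant---is not actually needed, since~$s$ is fixed in the statement and the hidden constant is allowed to depend on it; moreover the endpoint case~$s = 1$ is harmless because~$m \geq 1$ gives~$W^{m+1}_1(\widehat I) \subset W^2_1(\widehat I) \hookrightarrow C^1(\widehat I)$, which in particular also guarantees~$w \in H^1(0,T)$, so that~$\Pt w$ is well defined in the first place.
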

\begin{proof}
The result can be deduced from the following equivalent definition of~$\Pt w$ (see~\cite[Rem.~70.10]{Ern_Guermond-book-III}):
\begin{equation*}
\Pt w (t) = w(\tnmo) + \int_{\tnmo}^t \Pi_{q-1}^t w' \ds \qquad \forall t \in \In,
\end{equation*}
and the approximation properties of~$\Pi_{q-1}^t$ (see, e.g.,~\cite[Thm.~18.16(iii)]{Ern_Guermond-book-I}).
\end{proof}

\subsubsection{Properties of the Ritz projection~\texorpdfstring{$\Rh$}{Rh}}
Let~$\Ih : C^0(\overline{\Omega}) \to \Vhp$ denote the standard Lagrange interpolant, and~$\Ihg : C^0(\partial \Omega) \to \Vhp$ denote the interpolant of the restriction to~$\partial \Omega$. 
We shall denote by~$\Rh : H^1(\Omega) \cap C^0(\overline{\Omega}) \to \Vhp$ the Ritz projection taking into account Dirichlet boundary conditions. 
More precisely, for any~$\varphi \in H^1(\Omega) \cap C^0(\overline{\Omega})$, 
the projection~$\Rh \varphi \in \Ihg \varphi_{|_{\partial \Omega}} + \Vhpo$ and satisfies
\begin{equation}
\label{def:Rh-Dirichlet}
(c^2 \nabla \Rh \varphi, \nabla \zh)_{\Omega} = (c^2 \nabla \varphi, \nabla \zh)_{\Omega} \qquad \forall \zh \in \Vhpo.
\end{equation}

When~$\varphi \in H_0^1(\Omega)$, the definition of~$\Rh \varphi$ coincides with the one in~\eqref{def:Rh} for~$\Rho \varphi$. In such a case, for the sake of clarity, we shall write~$\Rho \varphi$.

We henceforth assume the following standard regularity of the spatial domain~$\Omega$.
\begin{assumption}[Elliptic regularity of~$\Omega$]
\label{asm:regularity-Omega}
The spatial domain~$\Omega \subset \IR^d$ is such that
\begin{equation*}
    \text{ if } \varphi \in H_0^1(\Omega) \ \text{ and }\ \Delta \varphi\in L^2(\Omega), \text{ then } \varphi \in H^2(\Omega),
\end{equation*}
which is valid, for instance, if~$\Omega$ is convex.
\end{assumption}

We next recall the approximation properties of the Ritz projection~$\Rh$.
\begin{lemma}[Estimates for~$\Rh$, {see, e.g., \cite[Thms. 33.2 and 33.3]{Ern_Guermond-book-II}}]
\label{lemma:estimates-Rh}
Let~$p \in \IN$ with~$p \geq 1$, and let~$\Omega$ satisfy Assumption~\ref{asm:regularity-Omega}. If~$c^2$ in~\eqref{def:Rh-Dirichlet} belongs to~$\in W_{\infty}^1(\Omega)$, the following estimates hold:
\begin{subequations}
\begin{alignat}{3}
\label{eq:estimate-Rh-H1}
\Norm{\nabla (\Id - \Rh) \varphi}{L^2(\Omega)^d} & \lesssim h^{\ell} \SemiNorm{\varphi}{H^{\ell + 1}(\Omega)} & & \qquad \forall \varphi \in H^{\ell + 1}(\Omega) \cap C^0(\overline{\Omega}),\\
\label{eq:estimate-Rh-L2}
\Norm{(\Id - \Rh) \varphi}{L^2(\Omega)} & \lesssim h^{\ell + 1} \big(\SemiNorm{\varphi}{H^{\ell + 1}(\Omega)} + \diam(\Omega)^{\frac12}|\varphi|_{H^{\ell + 1}(\partial \Omega)} \big) & & \qquad \forall \varphi \in H^{\ell + 1}(\Omega) \cap C^0(\overline{\Omega}),
\end{alignat}
\end{subequations}
with~$\max\{0, d/2 - 1\} < \ell \le p$. The hidden constants in~\eqref{eq:estimate-Rh-H1} and~\eqref{eq:estimate-Rh-L2} depend linearly on~$(c^{\star}/c_{\star})^2$, and the one in~\eqref{eq:estimate-Rh-L2} depends also on~$\SemiNorm{c^2}{W_{\infty}^1(\Omega)}$.
\end{lemma}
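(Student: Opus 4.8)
The plan is to prove the two estimates separately, using a best-approximation (C\'ea-type) argument for the $H^1$-seminorm bound~\eqref{eq:estimate-Rh-H1} and an Aubin--Nitsche duality argument for the $L^2$ bound~\eqref{eq:estimate-Rh-L2}. Throughout, the restriction $\max\{0, d/2 - 1\} < \ell \le p$ guarantees the Sobolev embedding $H^{\ell + 1}(\Omega) \hookrightarrow C^0(\overline{\Omega})$, so that the Lagrange interpolant $\Ih \varphi$ is well defined and satisfies the standard estimates $\Norm{\nabla(\Id - \Ih)\varphi}{L^2(\Omega)^d} \lesssim h^{\ell}\SemiNorm{\varphi}{H^{\ell + 1}(\Omega)}$ and $\Norm{(\Id - \Ih)\varphi}{L^2(\Omega)} \lesssim h^{\ell + 1}\SemiNorm{\varphi}{H^{\ell + 1}(\Omega)}$. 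The structural fact I would exploit is the Galerkin orthogonality $(c^2 \nabla(\Id - \Rh)\varphi, \nabla \zh)_{\Omega} = 0$ for all $\zh \in \Vhpo$, which is immediate from~\eqref{def:Rh-Dirichlet}. Note that only boundedness of $c^2$ is needed for~\eqref{eq:estimate-Rh-H1}; the $W_{\infty}^1(\Omega)$ regularity enters solely through the elliptic regularity of the dual problem below.

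For~\eqref{eq:estimate-Rh-H1}, the subtlety is that $\Rh \varphi$ does not match $\varphi$ on $\partial\Omega$, but rather its boundary interpolant, i.e.\ $\Rh\varphi \in \Ihg\varphi_{|_{\partial\Omega}} + \Vhpo$. I would therefore compare against $\Ih\varphi$, whose trace is exactly $\Ihg\varphi_{|_{\partial\Omega}}$, so that $\Ih\varphi - \Rh\varphi \in \Vhpo$ is an admissible test function. Writing $e := (\Id - \Rh)\varphi$ and applying Galerkin orthogonality with $\zh = \Ih\varphi - \Rh\varphi$, one gets $\Norm{c\nabla e}{L^2(\Omega)^d}^2 = (c^2 \nabla e, \nabla(\Id - \Ih)\varphi)_{\Omega}$; the Cauchy--Schwarz inequality then gives $\Norm{c\nabla e}{L^2(\Omega)^d} \le \Norm{c\nabla(\Id - \Ih)\varphi}{L^2(\Omega)^d}$, and bounding $c$ below by $c_{\star}$ and above by $c^{\star}$ together with the interpolation estimate yields~\eqref{eq:estimate-Rh-H1}, with a constant controlled by $c^{\star}/c_{\star}$ (hence a fortiori by $(c^{\star}/c_{\star})^2$ since this ratio is $\geq 1$).

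For~\eqref{eq:estimate-Rh-L2}, I would introduce the dual problem $-\nabla\cdot(c^2\nabla\psi) = e$ in $\Omega$ with $\psi = 0$ on $\partial\Omega$; Assumption~\ref{asm:regularity-Omega} together with $c^2 \in W_{\infty}^1(\Omega)$ gives $\psi \in H^2(\Omega)$ with $\Norm{\psi}{H^2(\Omega)} \lesssim \Norm{e}{L^2(\Omega)}$. Integration by parts then produces
\begin{equation*}
\Norm{e}{L^2(\Omega)}^2 = (c^2\nabla e, \nabla\psi)_{\Omega} - \int_{\partial\Omega} e\,(c^2\nabla\psi\cdot\bnOmega)\, \dS.
\end{equation*}
For the volume term, Galerkin orthogonality allows me to subtract $\Ih\psi \in \Vhpo$ (admissible since $\psi$ vanishes on $\partial\Omega$), and combining Cauchy--Schwarz, the $H^1$-interpolation estimate for $\psi$, and the already-proved bound~\eqref{eq:estimate-Rh-H1} gives a contribution $\lesssim h^{\ell + 1}\SemiNorm{\varphi}{H^{\ell + 1}(\Omega)}\Norm{e}{L^2(\Omega)}$. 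The boundary term is exactly where the nonhomogeneous data appear: since $e_{|_{\partial\Omega}} = (\Id - \Ihg)\varphi_{|_{\partial\Omega}}$, I would bound it by $\Norm{(\Id - \Ihg)\varphi}{L^2(\partial\Omega)}\,\Norm{c^2\nabla\psi\cdot\bnOmega}{L^2(\partial\Omega)}$, using the $(d-1)$-dimensional boundary interpolation estimate $\Norm{(\Id - \Ihg)\varphi}{L^2(\partial\Omega)} \lesssim h^{\ell + 1}\SemiNorm{\varphi}{H^{\ell + 1}(\partial\Omega)}$ together with a scaled trace inequality and the elliptic regularity bound, which give $\Norm{c^2\nabla\psi\cdot\bnOmega}{L^2(\partial\Omega)} \lesssim \diam(\Omega)^{1/2}\Norm{e}{L^2(\Omega)}$ (the power $\tfrac12$ being forced by dimensional consistency). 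Dividing through by $\Norm{e}{L^2(\Omega)}$ yields~\eqref{eq:estimate-Rh-L2}.

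The main obstacle I anticipate is the careful bookkeeping of the nonhomogeneous Dirichlet data in the duality step: one must keep $\Ih\psi$ an admissible zero-trace test function while correctly tracking the non-vanishing boundary residual $(\Id - \Ihg)\varphi$, and then pin down the $\diam(\Omega)$-scaling of the trace constant so that $\SemiNorm{\varphi}{H^{\ell + 1}(\partial\Omega)}$ enters with precisely the weight $\diam(\Omega)^{1/2}$ claimed in~\eqref{eq:estimate-Rh-L2}. The interior contributions are entirely standard; it is this boundary term---absent in the homogeneous case $\varphi \in H_0^1(\Omega)$, where~\eqref{eq:estimate-Rh-L2} reduces to the classical bound for $\Rho$---that accounts for the extra seminorm in the statement.
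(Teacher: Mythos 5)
Your proposal is correct and follows essentially the same route as the paper, which proves this lemma by citation to \cite[Thms.~33.2 and 33.3]{Ern_Guermond-book-II}: those theorems rest precisely on your Galerkin-orthogonality comparison with~$\Ih \varphi$ (admissible because~$\Ih \varphi - \Rh \varphi \in \Vhpo$) for~\eqref{eq:estimate-Rh-H1}, and on the Aubin--Nitsche duality with the boundary residual~$(\Id - \Ihg)\varphi$ controlled through a boundary interpolation estimate and a scaled trace inequality for~\eqref{eq:estimate-Rh-L2}. Your bookkeeping of the constants---$c^{\star}/c_{\star}$ in the energy step, with~$\SemiNorm{c^2}{W_{\infty}^1(\Omega)}$ entering only through the~$H^2$ regularity of the dual problem provided by Assumption~\ref{asm:regularity-Omega}, and the~$\diam(\Omega)^{1/2}$ weight arising from the trace/Poincar\'e scaling---also matches the dependencies asserted in the lemma.
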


\begin{remark}[Interpolant~$\Ihg$]
One can reduce the regularity assumptions in Lemma~\ref{lemma:estimates-Rh} by using the Scott--Zhang interpolant operator~\cite{Scott-Zhang:1990}, which is well defined for functions in~$W_1^1(\Omega)$. This would also influence the regularity assumptions on the Dirichlet data in Theorem~\ref{thm:a-priori}.
\eremk
\end{remark}

\subsection{Discrete initial and boundary conditions\label{sec:discrete-data}}
Let~$(\uht, \vht)$ be the solution to the discrete space--time formulation~\eqref{eq:space-time-formulation}, and~$(u, v)$ be the weak solution to~\eqref{eq:model-problem-hamiltonian}.

If~$\gD \in H^2(0, T; H^s(\partial \Omega))$ with~$s > \max\{(d-1)/2, 1/2\}$, then there exists a lifting function~$u_{\gD} \in H^2(0, T; H^{s + \frac12}(\Omega))$ of~$\gD$ (i.e., such that~$u_{\gD} {}_{|_{\partial \Omega}} = \gD$), and we can set
\begin{equation}
\label{eq:discrete-liftings}
\ugh = \Pt \Ihg \gD \quad  \text{and} \quad \vgh = \Pt \Ihg \dpt \gD,
\end{equation}
as~$H^{s + \frac12}(\Omega) \hookrightarrow C^0(\overline{\Omega})$.

For nonhomogeneous Dirichlet boundary conditions, the choice in~\eqref{eq:discrete-initial} of the discrete initial conditions has to be modified as follows:
assume that~$\dpt \gD(\cdot, 0) = v_0$ on~$\partial \Omega$, then we set
\begin{equation}
\label{eq:discrete-initial-Dirichlet}
\uoh = \Rh u_0 \quad \text{ and } \quad \voh = \Ihg v_0{}_{|_{\partial \Omega}} + \Pih (v_0 - \dpt u_{\gD} {}_{|_{\SO}}),
\end{equation}
so as to ensure compatibility with the discrete liftings~$\ugh$ and~$\vgh$.

We define the following space--time projections:
\begin{equation}
\label{eq:Piht-projections}
\Piht u := \Pt \Rh u \in \ugh + \Vhto \quad \text{ and } \quad \Piht v := \Pt \Rh v \in \vgh + \Vhto,
\end{equation}
which are chosen so that the discrete error functions have zero traces. More precisely,
\begin{equation}
\label{eq:identity-discrete-errors}
\Piht u - \uht = \Pt \Rho (u - \uht) \in \Vhto \quad \text{and} \quad \Piht v - \vht = \Pt \Rho (v - \vht) \in \Vhto.
\end{equation}
Moreover, we define the projection errors
\begin{alignat*}{3}
\epiu & := u - \Piht u = (\Id - \Pt \Rh) u, \\
\epiv & := v - \Piht v = (\Id - \Pt \Rh) v.
\end{alignat*}

Finally, we define the following error functions:
\begin{equation}
\label{def:eu-ev}
\begin{split}
\eu := u - \uht = \epiu + \Piht \eu \quad \text{ and } \quad 
\ev := v - \vht = \epiv + \Piht \ev.
\end{split}
\end{equation}

\subsection{\emph{A priori} error estimates\label{sec:a-priori}}
We now present some~\emph{a priori} error bounds in Lemma~\ref{lemma:a-priori-bounds} that we use to derive some~$(h, \tau)$ error estimates in Proposition~\ref{prop:error-estimates-discrete} for the discrete errors.

\begin{assumption}[Regularity assumptions]
\label{asm:regularity-data}
For some~$s > \max\{(d - 1)/2, 1/2\}$, and~$r > \max\{1, d/2\}$, 
we assume that the data of model~\eqref{eq:model-problem-hamiltonian} satisfies (at least)
\begin{equation*}
\begin{split}
f \in L^1(0, T; L^2(\Omega)), \quad \gD \in H^2(0, T; H^s(\partial \Omega)), \quad u_0,\, v_0 \in H^r(\Omega), \\
c \in C^0(\overline{\Omega}) \cap W^1_{\infty}(\Omega) \text{ with } 0 < c_{\star} < c(\bx) < c^{\star} \text{ for all } \bx \in \Omega.
\end{split}
\end{equation*}
In addition, we require compatibility of the initial data~$(u_0, v_0)$ and the Dirichlet data~$(\gD, \dpt \gD)$ on~$\partial \Omega \times \{0\}$.
We also assume that the weak solution~$u$ to~\eqref{eq:model-problem} satisfies
\begin{equation*}
u \in H^2(0, T; H^r(\Omega)) \quad \text{and} \quad \nabla \cdot (c^2 \nabla u) \in H^1(0, T; L^2(\Omega)).
\end{equation*}
\end{assumption}
\begin{lemma}[Bounds on the discrete error]
\label{lemma:a-priori-bounds}
Let~$(u, v)$ be the weak solution to~\eqref{eq:model-problem-hamiltonian}, and~$(\uht, \vht) \in \Vht \times \Vht$ be the solution to the discrete space--time formulation~\eqref{eq:space-time-formulation} with discrete liftings~$\ugh,\, \vgh$ in~\eqref{eq:discrete-liftings} and discrete initial conditions~$\uoh,\, \voh$ defined by~\eqref{eq:discrete-initial-Dirichlet}.
Let also Assumptions~\ref{asm:regularity-Omega} and~\ref{asm:regularity-data} hold.
Then, the discrete errors in~\eqref{eq:identity-discrete-errors} satisfy the following bound:
\begin{equation}
\label{eq:bound-discrete-error}
\begin{split}
& \Norm{\Piht \ev}{C^0([0, T]; L^2(\Omega))} 
 + \Norm{c \nabla \Piht \eu}{C^0([0, T]; L^2(\Omega)^d)} \\
& \qquad \lesssim \Norm{\Pih (\Id - \Rho) \overline{v}_0}{L^2(\Omega)} + \Norm{(\Id - \Rh) \dpt v}{L^1(0, T; L^2(\Omega))} \\
& \qquad \quad + \Norm{(\Id - \Pt) (\nabla \cdot (c^2 \nabla u))}{L^1(0, T; L^2(\Omega))} + \Norm{(\Id - \Pt) (c \nabla v)}{L^1(0, T; L^2(\Omega)^d)},
\end{split}
\end{equation}
where~$\overline{v}_0 := v_0 - \dpt u_{\gD} {}_{|_{\SO}}$, and
the hidden constant depends only on~$q$.
\end{lemma}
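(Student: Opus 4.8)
The plan is to show that the pair of discrete errors $(\Piht\eu,\Piht\ev)\in\Vhto\times\Vhto$ solves the perturbed space--time formulation~\eqref{eq:space-time-formulation-perturbed} for a suitable perturbation~$\Upsilon$ and a modified source term, and then to invoke the continuous dependence result of Theorem~\ref{thm:continuous-dependence}. Subtracting the discrete formulation~\eqref{eq:space-time-formulation} from the continuous equations tested against functions in~$\Wht$ (using $v=\dpt u$ and $\nabla\cdot(c^2\nabla u)=\dpt v-f$), and splitting $\eu=\epiu+\Piht\eu$ and $\ev=\epiv+\Piht\ev$ as in~\eqref{def:eu-ev}, one obtains two error equations whose right-hand sides are entirely determined by the projection errors $\epiu$ and $\epiv$. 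The task is therefore to rewrite these right-hand sides so as to match the structure of~\eqref{eq:space-time-formulation-perturbed} and to read off the four terms on the right of~\eqref{eq:bound-discrete-error}.

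For the first equation, I would use $\nabla v=\nabla\dpt u$ to get $(c^2\nabla\Piht\ev,\nabla\zht)_{\QT}-(c^2\nabla\dpt\Piht\eu,\nabla\zht)_{\QT}=-(c^2\nabla\epiv,\nabla\zht)_{\QT}+(c^2\nabla\dpt\epiu,\nabla\zht)_{\QT}$ for all $\zht\in\Wht$. The two key simplifications are: (i) the representation of $\Pt$ in Lemma~\ref{lemma:estimates-Pt}, which gives $\dpt\Pt=\Pi_{q-1}^t\dpt$, together with the fact that $\Rh$ commutes with $\dpt$ (as $c$ is time-independent) and the $L^2(0,T)$-orthogonality of $\Pi_{q-1}^t$ against the degree-$(q-1)$ function $\zht$; using the defining property~\eqref{def:Rh-Dirichlet} of the Ritz projection, this makes the $\dpt\epiu$ contribution vanish; and (ii) the observation that the Ritz orthogonality $(c^2\nabla(\Id-\Rh)w,\nabla\zh)_\Omega=0$ for $\zh\in\Vhpo$ is preserved after applying the time operator $\Pt$, since $\Pt$ commutes with the bounded spatial functional $\zh\mapsto(c^2\,\cdot\,,\nabla\zh)_\Omega$. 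Combining these, the right-hand side collapses to $-(c^2\nabla(\Id-\Pt)v,\nabla\zht)_{\QT}$, so that the choice $\Upsilon=-(\Id-\Pt)v$ reproduces~\eqref{eq:space-time-formulation-perturbed-1} and contributes exactly $\Norm{(\Id-\Pt)(c\nabla v)}{L^1(0,T;L^2(\Omega)^d)}$.

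For the second equation, the error identity reads $(\dpt\Piht\ev,\wht)_{\QT}+(c^2\nabla\Piht\eu,\nabla\wht)_{\QT}=-(\dpt\epiv,\wht)_{\QT}-(c^2\nabla\epiu,\nabla\wht)_{\QT}$ for all $\wht\in\Wht$, and I would treat the two terms separately. For the first, the same $\dpt\Pt=\Pi_{q-1}^t\dpt$ identity and the orthogonality of $\Pi_{q-1}^t$ reduce it to $-(\Pi_{q-1}^t(\Id-\Rh)\dpt v,\wht)_{\QT}$, which by Lemma~\ref{lemma:stab-pi-time} is controlled in $L^1(0,T;L^2(\Omega))$ by $\Norm{(\Id-\Rh)\dpt v}{L^1(0,T;L^2(\Omega))}$. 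For the second, I would again discard the Ritz part and the $\Pt$-image of the Ritz error to get $-(c^2\nabla(\Id-\Pt)u,\nabla\wht)_{\QT}$, and then integrate by parts in space (the boundary term vanishes because $\wht$ has zero spatial trace) to obtain $((\Id-\Pt)(\nabla\cdot(c^2\nabla u)),\wht)_{\QT}$; this uses $\nabla\cdot(c^2\nabla u)\in H^1(0,T;L^2(\Omega))$ from Assumption~\ref{asm:regularity-data} and yields $\Norm{(\Id-\Pt)(\nabla\cdot(c^2\nabla u))}{L^1(0,T;L^2(\Omega))}$. Together these play the role of the source $f$ in~\eqref{eq:space-time-formulation-perturbed-2}.

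It remains to treat the initial data and to conclude. Using $\Pt w(0)=w(0)$ from~\eqref{def:Pt-1} and $u(\cdot,0)=u_0$, $v(\cdot,0)=v_0$, one gets $\Piht\eu(\cdot,0)=\Rh u_0-\uoh=0$ by the choice $\uoh=\Rh u_0$, and $\Piht\ev(\cdot,0)=\Rh v_0-\voh$; inserting the definition~\eqref{eq:discrete-initial-Dirichlet} of $\voh$, using the compatibility $\dpt\gD(\cdot,0)=v_0$ on $\partial\Omega$ (so that $\overline{v}_0\in H_0^1(\Omega)$) and the fact that $\Rho$ acts as the identity on $\Vhpo$, this reduces to $-\Pih(\Id-\Rho)\overline{v}_0$, producing the term $\Norm{\Pih(\Id-\Rho)\overline{v}_0}{L^2(\Omega)}$. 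Thus $(\Piht\eu,\Piht\ev)$ solves~\eqref{eq:space-time-formulation-perturbed} with the above $\Upsilon$, source, and initial data, and I would apply Theorem~\ref{thm:continuous-dependence}, whose proof uses only the initial values $\Piht\eu(\cdot,0)$ and $\Piht\ev(\cdot,0)$ and hence extends verbatim to these nonstandard initial conditions; taking square roots and using $\sqrt{a^2+b^2}\le a+b$ then yields~\eqref{eq:bound-discrete-error}. I expect the main obstacle to be the bookkeeping of the projection errors — in particular making the $\dpt\epiu$ and Ritz contributions cancel exactly and justifying the spatial integration by parts — together with the careful reduction of the nonhomogeneous Dirichlet initial data, where the specific construction of $\ugh$, $\vgh$ in~\eqref{eq:discrete-liftings} and of $\voh$ in~\eqref{eq:discrete-initial-Dirichlet} is precisely what is needed to recover the single boundary term $\Norm{\Pih(\Id-\Rho)\overline{v}_0}{L^2(\Omega)}$.
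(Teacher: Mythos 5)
Your proposal is correct and follows the paper's proof essentially verbatim: like the paper, you recast $(\Piht\eu,\Piht\ev)$ as the solution of the perturbed formulation~\eqref{eq:space-time-formulation-perturbed} with $\Upsilon=-(\Id-\Pt)v$, effective source $-(\Id-\Rh)\dpt v+(\Id-\Pt)(\nabla\cdot(c^2\nabla u))$, and initial data $0$ and $-\Pih(\Id-\Rho)\overline{v}_0$, using the same ingredients (the identity $\dpt\Pt=\Pi_{q-1}^t\dpt$ from~\eqref{def:Pt-2}, Ritz orthogonality applied pointwise in time, $L^2(0,T)$-orthogonality of $\Pi_{q-1}^t$ against test functions in~$\Wht$, and spatial integration by parts with the zero trace of~$\wht$), before invoking Theorem~\ref{thm:continuous-dependence}. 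The only cosmetic deviation is that you keep $\Pi_{q-1}^t$ in front of $(\Id-\Rh)\dpt v$ and control it via Lemma~\ref{lemma:stab-pi-time}, whereas the paper removes it directly by orthogonality against $\wht\in\Wht$; both routes give a hidden constant depending only on~$q$.
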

\begin{proof}
Since the space--time formulation~\eqref{eq:space-time-formulation} is consistent, the following error equations hold:
\begin{alignat*}{3}
(c^2 \nabla \ev, \nabla \zht)_{\QT} - (c^2 \nabla \dpt \eu, \nabla \zht)_{\QT} & = 0 & & \qquad \forall \zht \in \Wht, \\
(\dpt \ev , \wht)_{\QT} + (c^2 \nabla \eu, \nabla \wht)_{\QT} & = 0 & & \qquad \forall \wht \in \Wht,
\end{alignat*}
which, splitting~$\ev$ and~$\eu$ as in~\eqref{def:eu-ev}, implies that
\begin{subequations}
\label{eq:error-equations-original}
\begin{alignat}{3}
\label{eq:error-equations-original-1}
(c^2 \nabla \Piht \ev, \nabla \zht)_{\QT} - (c^2 \nabla \dpt \Piht \eu, \nabla \zht) & = (c^2 \nabla \Piht v, \nabla \zht)_{\QT} - (c^2 \nabla \dpt \Piht u, \nabla \zht)_{\QT}, \\
\label{eq:error-equations-original-2}
(\dpt \Piht \ev, \wht)_{\QT} + (c^2 \nabla \Piht \eu, \wht)_{\QT} & = -(\dpt \epiv, \wht) - (c^2 \nabla \epiu, \nabla \wht)_{\QT}
\end{alignat}
\end{subequations}
for all~$(\zht, \wht) \in \Wht \times \Wht$.

We now simplify the terms on the right-hand side of~\eqref{eq:error-equations-original} using the orthogonality properties of~$\Pt$ and~$\Rh$ as follows:
\begin{subequations}
\begin{alignat}{3}
\nonumber
(c^2 \nabla \Piht v, \nabla \zht)_{\QT} & - (c^2 \nabla \dpt \Piht u, \nabla \zht)_{\QT}  \\
\nonumber
& = (c^2 \nabla \Pt \Rh v, \nabla \zht)_{\QT} - (c^2 \nabla \dpt \Pt \Rh u, \nabla \zht)_{\QT} \\
\nonumber
& = (c^2 \nabla \Pt v, \nabla \zht)_{\QT} - (c^2 \nabla \dpt u, \nabla \zht)_{\QT} \\
\label{eq:projection-error-term-1}
& = - (c^2 \nabla (\Id - \Pt) v, \nabla \zht)_{\QT}, \\
\nonumber
-(\dpt \epiv, \wht)_{\QT} & - (c^2 \nabla \epiu, \nabla \wht)_{\QT} \\
\nonumber
& = -(\dpt (\Id - \Pt \Rh) v, \wht)_{\QT} - (c^2\nabla (\Id - \Pt \Rh) u, \nabla \wht)_{\QT} \\
\label{eq:projection-error-term-2}
& = -( (\Id - \Rh) \dpt v, \wht)_{\QT} + ((\Id - \Pt)(\nabla \cdot (c^2 \nabla u)), \wht)_{\QT},
\end{alignat}
\end{subequations}
where, in the last step of~\eqref{eq:projection-error-term-2}, we have also integrated by parts in space.

Inserting identities~\eqref{eq:projection-error-term-1} and~\eqref{eq:projection-error-term-2} in the error equation~\eqref{eq:identity-discrete-errors}, we deduce that the discrete errors~$\Piht \ev$ and~$\Piht \eu$ solve a perturbed variational problem of the form~\eqref{eq:space-time-formulation-perturbed} with
\begin{equation*}
\begin{split}
\Piht \eu(\cdot, 0) & = \Rh (u_0 - \uoh) = 0, \\
\Piht \ev(\cdot, 0) = \Rh(v_0 - \voh) & = \Rh(v_0 - \Ihg v_0 {}_{|_{\partial \Omega}} - \Pih (v_0 - \dpt u_{\gD} {}_{|_{\SO}})) \\
& = \Rho \overline{v}_0 - \Pih \overline{v}_0 = - \Pih(\Id - \Rho) \overline{v}_0, \\
\Upsilon & = - (\Id - \Pt) v, \\
f & = - (\Id - \Rh) \dpt v + (\Id - \Pt) (\nabla \cdot (c^2 \nabla u)),
\end{split}
\end{equation*}
as~$\Pt w(0) = w(0)$.

Therefore, the error bound~\eqref{eq:bound-discrete-error} follows from applying the continuous dependence on the data in Theorem~\ref{thm:continuous-dependence}.
\end{proof}

\begin{proposition}[Estimates of the discrete error\label{prop:error-estimates-discrete}]
Let the assumptions of Lemma~\ref{lemma:a-priori-bounds} hold, and assume also that~$\overline{v}_0 \in H^{\ell + 1}(\Omega)$, $\gD \in W^2_1(0, T; H^{\ell + 1}(\partial \Omega))$, and~$u \in W^2_1(0, T; H^{\ell  + 1}(\Omega)) \cap W_1^{m + 1}(0, T; H^2(\Omega)) \cap W_1^{m + 2}(0, T; H^1(\Omega))$ for some~$1 \le \ell \le p$ and~$1 \le m \le q$. Then, the following estimate holds:
\begin{equation}
\label{eq:estimate-discrete-error}
\begin{split}
\Norm{\Piht \ev}{C^0([0, T]; L^2(\Omega))} & + 
\Norm{c \nabla \Piht \eu}{C^0([0, T]; L^2(\Omega)^d)} \\
& \lesssim h^{\ell + 1} \big(\SemiNorm{\overline{v}_0}{H^{\ell + 1}(\Omega)} + \SemiNorm{\dptt u}{L^1(0, T; H^{\ell + 1}(\Omega))}
+ \diam(\Omega)^{\frac12} \SemiNorm{\dptt \gD}{L^1(0, T; H^{\ell + 1}(\partial \Omega))} \big) \\
& \quad + \tau^{m + 1} \big(\Norm{\nabla \cdot( c^2 \nabla \dpt^{(m + 1)} u)}{L^1(0, T; L^2(\Omega))} + \Norm{c \nabla \dpt^{(m + 2)} u}{L^1(0, T; L^2(\Omega)^d)}\big),
\end{split}
\end{equation}
where the hidden constant depends only on~$(c^\star/c_\star)^2$, $\SemiNorm{c^2}{W_{\infty}^1(\Omega)}$, and~$q$.
\end{proposition}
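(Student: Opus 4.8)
The plan is to start from the bound~\eqref{eq:bound-discrete-error} supplied by Lemma~\ref{lemma:a-priori-bounds} and to estimate each of its four summands separately, inserting the approximation properties of the Ritz projection~$\Rh$ (Lemma~\ref{lemma:estimates-Rh}) and of the time projection~$\Pt$ (Lemma~\ref{lemma:estimates-Pt}), together with the identity~$v = \dpt u$ and the boundary relations inherited from~$u_{|_{\SD}} = \gD$. The two terms carrying powers of~$h$ come from the first two summands in~\eqref{eq:bound-discrete-error}. For the initial term I would first use that~$\Pih$ is an~$L^2(\Omega)$-orthogonal projection, hence a contraction in~$L^2(\Omega)$, to drop it; since the compatibility assumption~$\dpt \gD(\cdot, 0) = v_0$ on~$\partial \Omega$ forces~$\overline{v}_0 = v_0 - \dpt u_{\gD} {}_{|_{\SO}}$ to have zero trace, the relevant projection is the homogeneous Ritz projection~$\Rho$, for which the boundary seminorm in~\eqref{eq:estimate-Rh-L2} is absent, yielding~$\Norm{\Pih (\Id - \Rho) \overline{v}_0}{L^2(\Omega)} \lesssim h^{\ell + 1} \SemiNorm{\overline{v}_0}{H^{\ell + 1}(\Omega)}$.

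For the second term I would apply estimate~\eqref{eq:estimate-Rh-L2} pointwise in time and then integrate over~$(0, T)$. Using~$\dpt v = \dptt u$ and the boundary identity~$\dptt u {}_{|_{\partial \Omega}} = \dptt \gD$ (obtained by differentiating~$u_{|_{\SD}} = \gD$ twice in time), the volume seminorm produces the contribution~$h^{\ell + 1} \SemiNorm{\dptt u}{L^1(0, T; H^{\ell + 1}(\Omega))}$ and the boundary seminorm produces~$h^{\ell + 1} \diam(\Omega)^{\frac12} \SemiNorm{\dptt \gD}{L^1(0, T; H^{\ell + 1}(\partial \Omega))}$, which, together with the first term, account for the full~$h^{\ell + 1}$ part of~\eqref{eq:estimate-discrete-error}.

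The two terms carrying powers of~$\tau$ come from the last two summands in~\eqref{eq:bound-discrete-error}, and I would treat them with Lemma~\ref{lemma:estimates-Pt} applied in the Bochner sense (pointwise in space) with~$s = 1$ and approximation order~$m$. Since the wavespeed~$c$ is independent of time, the spatial operators commute with~$\dpt^{(m + 1)}$, so that~$\SemiNorm{\nabla \cdot (c^2 \nabla u)}{W_1^{m + 1}(0, T; L^2(\Omega))} = \Norm{\nabla \cdot (c^2 \nabla \dpt^{(m + 1)} u)}{L^1(0, T; L^2(\Omega))}$ and, using~$v = \dpt u$, $\SemiNorm{c \nabla v}{W_1^{m + 1}(0, T; L^2(\Omega)^d)} = \Norm{c \nabla \dpt^{(m + 2)} u}{L^1(0, T; L^2(\Omega)^d)}$; this reproduces exactly the~$\tau^{m + 1}$ contribution. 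The regularity hypotheses of the proposition are precisely what renders these seminorms finite: $\overline{v}_0 \in H^{\ell + 1}(\Omega)$, $u \in W_1^2(0, T; H^{\ell + 1}(\Omega))$, and~$\gD \in W^2_1(0, T; H^{\ell + 1}(\partial \Omega))$ for the~$h$-terms, and~$u \in W_1^{m + 1}(0, T; H^2(\Omega)) \cap W_1^{m + 2}(0, T; H^1(\Omega))$ for the~$\tau$-terms (the embeddings~$H^{\ell + 1}(\Omega), H^2(\Omega) \hookrightarrow C^0(\overline{\Omega})$ ensuring that~$\Rh$ is well defined pointwise in time).

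I expect the only genuinely delicate point to be the boundary-trace bookkeeping: verifying that~$\overline{v}_0$ lies in the kernel of the trace operator so that the homogeneous estimate for~$\Rho$ (without boundary seminorm) applies, and correctly transferring the spatial boundary seminorm of~$\dptt u$ to that of~$\dptt \gD$ through the time-differentiated Dirichlet condition. Once these identities are in place, collecting the four estimates and absorbing all constants into the hidden constant, which then depends only on~$(c^\star/c_\star)^2$, $\SemiNorm{c^2}{W_{\infty}^1(\Omega)}$, and~$q$ (as in Lemmas~\ref{lemma:estimates-Rh} and~\ref{lemma:a-priori-bounds}), completes the proof.
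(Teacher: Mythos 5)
Your proposal is correct and follows essentially the same route as the paper's proof: each of the four summands in~\eqref{eq:bound-discrete-error} is bounded exactly as you describe, via the~$L^2(\Omega)$-stability of~$\Pih$, the approximation estimates for~$\Rho$ and~$\Rh$ in Lemma~\ref{lemma:estimates-Rh}, and Lemma~\ref{lemma:estimates-Pt} with~$s = 1$ applied pointwise in space, using~$v = \dpt u$ and the time-independence of~$c$ to commute the spatial operators with~$\dpt^{(m+1)}$. Your explicit verification that the compatibility condition gives~$\overline{v}_0$ zero trace, so the boundary seminorm in~\eqref{eq:estimate-Rh-L2} drops for~$\Rho$, merely spells out a point the paper leaves implicit and does not alter the argument.
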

\begin{proof}
Using the stability properties of~$\Pih$, as well as the approximation properties of~$\Pt$ and~$\Rh$ in Lemmas~\ref{lemma:estimates-Pt} and~\ref{lemma:estimates-Rh}, we can estimate each term in the~\emph{a priori} error bound in Lemma~\ref{lemma:a-priori-bounds} as follows:
\begin{alignat*}{3}
\Norm{\Pih (\Id - \Rho) \overline{v}_0}{L^2(\Omega)} & \le \Norm{(\Id - \Rho) \overline{v}_0}{L^2(\Omega)} \lesssim h^{\ell + 1} \SemiNorm{\overline{v}_0}{H^{\ell + 1}(\Omega)}, \\
\Norm{(\Id - \Rh) \dpt v}{L^1(0, T; L^2(\Omega))} & \lesssim h^{\ell + 1} \big(\SemiNorm{\dptt u}{L^1(0, T; H^{\ell + 1}(\Omega))} + \diam(\Omega)^{\frac12} \SemiNorm{\dptt \gD}{L^1(0, T; H^{\ell + 1}(\partial \Omega))} \big), \\
\Norm{(\Id - \Pt)(\nabla \cdot(c^2 \nabla u))}{L^1(0, T; L^2(\Omega))} & \lesssim \tau^{m + 1} \Norm{\nabla \cdot (c^2 \nabla \dpt^{(m + 1)} u)}{L^1(0, T; L^2(\Omega))}, \\
\Norm{(\Id - \Pt)(c \nabla v)}{L^1(0, T; L^2(\Omega)^d)} & \lesssim \tau^{m + 1} \Norm{c \nabla \dpt^{(m + 2)} u}{L^1(0, T: L^2(\Omega)^d)},
\end{alignat*}
and estimate~\eqref{eq:estimate-discrete-error} is then obtained.
\end{proof}

Using the Poincar\'e--Friedrichs inequality (see, e.g., \cite[Eq.~(5.3.3)]{Brenner-Scott:book}),
the fact that~$\Piht \eu \in \Vhto$, and Proposition~\ref{prop:error-estimates-discrete}, the following estimates for the discrete error~$\Piht \eu$ in the~$\|\cdot\|_{C^0([0, T]; L^2(\Omega))}$ norm can be obtained.
\begin{corollary}[Estimates for~$\Norm{\Piht \eu}{C^0([0, T]; L^2(\Omega))}$]
Under the assumptions of Proposition~\ref{prop:error-estimates-discrete}, the following estimates hold:
\begin{equation*}
\begin{split}
\Norm{\Piht \eu}{C^0([0, T]; L^2(\Omega))} & \lesssim \Norm{c \nabla \Piht \eu}{C^0([0, T]; L^2(\Omega)^d)} \\
& \lesssim h^{\ell + 1} \big(\SemiNorm{\overline{v}_0}{H^{\ell + 1}(\Omega)} + \SemiNorm{\dptt u}{L^1(0, T; H^{\ell + 1}(\Omega))}
+ \diam(\Omega)^{\frac12}  \SemiNorm{\dptt \gD}{L^1(0, T; H^{\ell + 1}(\partial \Omega))} \big) \\
& \quad + \tau^{m + 1} \big(\Norm{\nabla \cdot( c^2 \nabla \dpt^{(m + 1)} u)}{L^1(0, T; L^2(\Omega))} + \Norm{c \nabla \dpt^{(m + 2)} u}{L^1(0, T; L^2(\Omega)^d)}\big).
\end{split}
\end{equation*}
\end{corollary}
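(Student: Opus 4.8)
The plan is to establish the two chained inequalities in the statement separately. The second inequality, $\Norm{c \nabla \Piht \eu}{C^0([0, T]; L^2(\Omega)^d)} \lesssim h^{\ell+1}(\cdots) + \tau^{m+1}(\cdots)$, is nothing but Proposition~\ref{prop:error-estimates-discrete} specialized to the gradient term, so it requires no further work. Consequently, the only substantive step is the first inequality, $\Norm{\Piht \eu}{C^0([0, T]; L^2(\Omega))} \lesssim \Norm{c \nabla \Piht \eu}{C^0([0, T]; L^2(\Omega)^d)}$, which I would prove by exploiting the vanishing spatial trace of $\Piht \eu$ together with the uniform positivity of the wavespeed.

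First I would recall, from identity~\eqref{eq:identity-discrete-errors}, that the discrete error satisfies $\Piht \eu \in \Vhto$. Since $\Vhto = \Vht \cap C^0([0, T]; H_0^1(\Omega))$, this membership means precisely that, for each fixed time $t \in [0, T]$, the spatial slice $\Piht \eu(\cdot, t)$ belongs to $H_0^1(\Omega)$. The Poincaré--Friedrichs inequality then applies pointwise in time, furnishing a constant $\cpp$ depending only on $\Omega$ with $\Norm{\Piht \eu(\cdot, t)}{L^2(\Omega)} \le \cpp \Norm{\nabla \Piht \eu(\cdot, t)}{L^2(\Omega)^d}$ for every such $t$.

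Next I would absorb the wavespeed. Using the lower bound $c_\star < c(\bx)$ from Assumption~\ref{asm:regularity-data}, one has the pointwise-in-space estimate $\Norm{\nabla \Piht \eu(\cdot, t)}{L^2(\Omega)^d} \le c_\star^{-1} \Norm{c \nabla \Piht \eu(\cdot, t)}{L^2(\Omega)^d}$ at each $t$. Chaining this with the Poincaré--Friedrichs bound and then taking the supremum over $t \in [0, T]$ on both sides transfers the estimate to the $C^0([0, T]; \cdot)$ norms and yields the first inequality, with hidden constant $\cpp\, c_\star^{-1}$. Combining with Proposition~\ref{prop:error-estimates-discrete} then produces the stated right-hand side.

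I expect no genuine obstacle here: the argument is a routine pointwise-in-time Poincaré estimate, and the essential mechanism is that the continuity in time enforced in $\Vhto$ lets one pass a time-independent spatial inequality through to the $C^0([0, T]; L^2(\Omega))$ norm. The only mild care required is to confirm that the supremum over $t$ commutes across both sides of the inequality, which is immediate since the Poincaré constant $\cpp$ and the bound $c_\star$ do not depend on $t$.
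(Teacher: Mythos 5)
Your proposal is correct and follows essentially the same route as the paper: the paper likewise invokes the Poincar\'e--Friedrichs inequality together with the fact that $\Piht \eu \in \Vhto$ (so its spatial slices lie in $H_0^1(\Omega)$ for every $t$), and then concludes via Proposition~\ref{prop:error-estimates-discrete}. Your explicit handling of the wavespeed lower bound $c_\star$ and of passing the time-independent constant through the supremum over $t$ merely spells out details the paper leaves implicit in its hidden constant.
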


We are now ready to derive~$(h, \tau)$~\emph{a priori} error estimates for the error of the method. Due to the continuous embedding~$W_1^1(0, T) \hookrightarrow C^0[0, T]$ (see~\cite[Rem.~10 in~\S8.2]{Brezis:2011}), just some extra regularity on the continuous solution~$u$ is necessary in the next theorem. 
The stability properties in Lemma~\ref{lemma:stab-Pt} of~$\Pt$ are of utmost importance when studying the projection errors, which has often been neglected in previous works.
\begin{theorem}[\emph{A priori} error estimates] Let the assumptions of Proposition~\ref{prop:error-estimates-discrete} hold, and assume also that~$u \in C^{m + 2}([0, T]; L^2(\Omega))$. Then, the following estimates hold:
\label{thm:a-priori}
\begin{alignat*}{3}
\nonumber
\Norm{\ev}{C^0([0, T]; L^2(\Omega))} & \lesssim h^{\ell + 1}\big(\Norm{\dpt u}{C^0([0, T]; H^{\ell + 1}(\Omega))} + \diam(\Omega)^{\frac12} \Norm{\dpt \gD}{C^0([0, T]; H^{\ell + 1}(\Omega))} \\
\nonumber
& \qquad \qquad + \SemiNorm{\overline{v}_0}{H^{\ell + 1}(\Omega)} + \SemiNorm{\dptt u}{L^1(0, T; H^{\ell + 1}(\Omega))}
+ \diam(\Omega)^{\frac12} \SemiNorm{\dptt \gD}{L^1(0, T; H^{\ell + 1}(\partial \Omega))} \big)\\
\nonumber
& \quad + \tau^{m + 1} \big(\Norm{\dpt^{(m + 2)} u}{C^0([0, T]; L^2(\Omega))} + \Norm{\nabla \cdot( c^2 \nabla \dpt^{(m + 1)} u)}{L^1(0, T; L^2(\Omega))} \\
& \qquad \qquad  + \Norm{c \nabla \dpt^{(m + 2)} u}{L^1(0, T; L^2(\Omega)^d)}  \big), \\
\nonumber
\Norm{c \nabla \eu}{C^0([0, T]; L^2(\Omega)^d)} & \lesssim h^{\ell} \big(\Norm{u}{C^0([0, T]; H^{\ell + 1}(\Omega))} + h\SemiNorm{\overline{v}_0}{H^{\ell + 1}(\Omega)} \\
\nonumber
& \qquad \qquad + h\SemiNorm{\dptt u}{L^1(0, T; H^{\ell + 1}(\Omega))}
+ h \diam(\Omega)^{\frac12} \SemiNorm{\dptt \gD}{L^1(0, T; H^{\ell + 1}(\partial \Omega))} \big)\\
\nonumber
& \quad + \tau^{m + 1} \big( \Norm{c \nabla \dpt^{(m + 1)} u}{C^0([0, T]; L^2(\Omega)^d)} 
+ \Norm{\nabla \cdot( c^2 \nabla \dpt^{(m + 1)} u)}{L^1(0, T; L^2(\Omega))} \\
& \qquad \qquad + \Norm{c \nabla \dpt^{(m + 2)} u}{L^1(0, T; L^2(\Omega)^d)} \big), \\
\nonumber
\Norm{\eu}{C^0([0, T]; L^2(\Omega)^d)} & \lesssim h^{\ell + 1} \big( \Norm{u}{C^0([0, T]; H^{\ell + 1}(\Omega))} + \diam(\Omega)^{\frac12} \Norm{\gD}{C^0([0, T]; H^{\ell + 1}(\Omega))} \\
\nonumber
& \qquad \qquad + \SemiNorm{\overline{v}_0}{H^{\ell + 1}(\Omega)} + \SemiNorm{\dptt u}{L^1(0, T; H^{\ell + 1}(\Omega))}
+ \diam(\Omega)^{\frac12} \SemiNorm{\dptt \gD}{L^1(0, T; H^{\ell + 1}(\partial \Omega))}\big)\\
\nonumber
& \quad + \tau^{m + 1} \big( \Norm{\dpt^{(m + 1)} u}{C^0([0, T]; L^2(\Omega))} + \Norm{\nabla \cdot( c^2 \nabla \dpt^{(m + 1)} u)}{L^1(0, T; L^2(\Omega))}  \\
& \qquad \qquad + \Norm{c \nabla \dpt^{(m + 2)} u}{L^1(0, T; L^2(\Omega)^d)}\big).
\end{alignat*}
\end{theorem}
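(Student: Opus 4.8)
The backbone of the argument is the splitting of the full errors into a projection part and a discrete-error part, already recorded in~\eqref{def:eu-ev}:
\[
\eu = \epiu + \Piht\eu, \qquad \ev = \epiv + \Piht\ev .
\]
Applying the triangle inequality in each of the three norms on the left-hand sides reduces the task to bounding, separately, the discrete errors $\Piht\eu,\ \Piht\ev$ and the projection errors $\epiu = (\Id - \Pt\Rh)u$ and $\epiv = (\Id - \Pt\Rh)v$. The discrete-error contributions are already in hand: $\Norm{\Piht\ev}{C^0([0,T];L^2(\Omega))}$ and $\Norm{c\nabla\Piht\eu}{C^0([0,T];L^2(\Omega)^d)}$ are controlled by Proposition~\ref{prop:error-estimates-discrete}, and $\Norm{\Piht\eu}{C^0([0,T];L^2(\Omega))}$ by the Corollary following it; together these produce exactly the $L^1$-in-time terms as well as the $\SemiNorm{\overline{v}_0}{H^{\ell+1}(\Omega)}$ and $\SemiNorm{\dptt\gD}{L^1(0,T;H^{\ell+1}(\partial\Omega))}$ contributions on the right-hand sides. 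Thus the whole of the remaining work is to estimate the projection errors $\epiv$, $c\nabla\epiu$ and $\epiu$ in their respective $C^0([0,T];X)$ norms, bearing in mind the identifications $v = \dpt u$, $u_{|_{\partial\Omega}} = \gD$ and $v_{|_{\partial\Omega}} = \dpt\gD$.

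For each projection error I would use the commuting decomposition
\[
\Id - \Pt\Rh = (\Id - \Rh) + (\Id - \Pt)\Rh ,
\]
which is legitimate because $\Pt$ acts only in time while $\Rh$ acts only in space, so $\Pt$ commutes with $\Rh$, with $c\nabla$, and with $\dpt$. The \emph{spatial} part $(\Id-\Rh)(\cdot)$ is estimated pointwise in time through Lemma~\ref{lemma:estimates-Rh} and then maximized over $t\in[0,T]$: the $L^2$-estimate~\eqref{eq:estimate-Rh-L2} yields the $h^{\ell+1}$ terms, with the boundary seminorms generating $\diam(\Omega)^{\frac12}\Norm{\gD}{C^0([0,T];H^{\ell+1}(\Omega))}$ (for $\epiu$) and $\diam(\Omega)^{\frac12}\Norm{\dpt\gD}{C^0([0,T];H^{\ell+1}(\Omega))}$ (for $\epiv$, since $v=\dpt u$), while the $H^1$-estimate~\eqref{eq:estimate-Rh-H1} supplies the leading $h^{\ell}\Norm{u}{C^0([0,T];H^{\ell+1}(\Omega))}$ term for $c\nabla\epiu$. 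This is where the $C^0([0,T];H^{\ell+1}(\Omega))$ regularity of $u$ (hence of $v$) is consumed.

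The delicate point, and the main obstacle, is the \emph{temporal} part $(\Id-\Pt)\Rh(\cdot)$ measured in the sup-in-time norm $C^0([0,T];X)$, because the approximation estimate for $\Pt$ in Lemma~\ref{lemma:estimates-Pt} is available only in $L^s(0,T)$ norms. The remedy is to combine the reproduction property of $\Pt$ (so that $(\Id-\Pt)\pi = 0$ for every $\pi\in\Vtq\otimes X$) with its $C^0[0,T]$-stability from Lemma~\ref{lemma:stab-Pt}: writing $g$ for $\Rh v$, $c\nabla\Rh u$, or $\Rh u$, and $\pi$ for an arbitrary element of $\Vtq\otimes X$,
\[
\Norm{(\Id-\Pt)g}{C^0([0,T];X)} = \Norm{(\Id-\Pt)(g-\pi)}{C^0([0,T];X)} \lesssim \Norm{g-\pi}{C^0([0,T];X)} .
\]
Taking $\pi$ to be a best $C^0$-in-time approximation and applying a Bramble--Hilbert argument in time gives $\Norm{(\Id-\Pt)g}{C^0([0,T];X)} \lesssim \tau^{m+1}\Norm{\dpt^{(m+1)}g}{C^0([0,T];X)}$; this is exactly the step requiring the extra hypothesis $u\in C^{m+2}([0,T];L^2(\Omega))$, which the embedding $W^1_1(0,T)\hookrightarrow C^0[0,T]$ renders a mild demand. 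Finally, using $\dpt^{(m+1)}\Rh = \Rh\dpt^{(m+1)}$ and $v=\dpt u$, and replacing $\Rh$ applied to a time derivative by the identity up to an $(\Id-\Rh)$ remainder that is of higher order in $h$ and thus absorbed, the temporal contributions collapse to $\tau^{m+1}\Norm{\dpt^{(m+2)}u}{C^0([0,T];L^2(\Omega))}$, $\tau^{m+1}\Norm{c\nabla\dpt^{(m+1)}u}{C^0([0,T];L^2(\Omega)^d)}$ and $\tau^{m+1}\Norm{\dpt^{(m+1)}u}{C^0([0,T];L^2(\Omega))}$, respectively. Collecting the spatial, temporal and discrete-error bounds and absorbing the higher-order cross terms yields the three stated estimates.
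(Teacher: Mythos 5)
Your overall architecture matches the paper's: split via~\eqref{def:eu-ev}, invoke Proposition~\ref{prop:error-estimates-discrete} and its corollary for the discrete errors, and reduce the remainder to the projection errors~$\epiu$, $\epiv$. The genuine gap is in your treatment of the temporal part. You split~$\Id - \Pt\Rh = (\Id - \Rh) + (\Id - \Pt)\Rh$, so the Bramble--Hilbert step gives
\[
\Norm{(\Id - \Pt)\Rh v}{C^0([0, T]; L^2(\Omega))} \lesssim \tau^{m+1} \Norm{\dpt^{(m+1)} \Rh v}{C^0([0, T]; L^2(\Omega))} = \tau^{m+1} \Norm{\Rh\, \dpt^{(m+2)} u}{C^0([0, T]; L^2(\Omega))},
\]
and you then propose to ``replace $\Rh$ by the identity up to an $(\Id - \Rh)$ remainder that is of higher order in $h$ and thus absorbed.'' This step fails under the stated hypotheses. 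The theorem adds only~$u \in C^{m+2}([0, T]; L^2(\Omega))$, so~$\dpt^{(m+2)} u(\cdot, t)$ is merely an~$L^2(\Omega)$ function (Proposition~\ref{prop:error-estimates-discrete} supplies in addition only~$\dpt^{(m+2)} u \in L^1(0, T; H^1(\Omega))$, which is useless for a sup-in-time bound). On~$L^2(\Omega)$ the Ritz projection is not even well defined---definition~\eqref{def:Rh-Dirichlet} requires~$H^1(\Omega) \cap C^0(\overline{\Omega})$---and, even granting it a meaning, $(\Id - \Rh)\dpt^{(m+2)} u$ gains no power of~$h$ without spatial regularity of~$\dpt^{(m+2)} u$ beyond~$L^2(\Omega)$: it is not an absorbable higher-order term. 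Your route thus silently requires something like~$\dpt^{(m+2)} u \in C^0([0, T]; H^1(\Omega) \cap C^0(\overline{\Omega}))$, strictly stronger than assumed; the same objection applies to the analogous terms for~$c\nabla\epiu$ and~$\epiu$.

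The paper orders the splitting the other way: $\Id - \Pt\Rh = (\Id - \Pt) + \Pt(\Id - \Rh)$. Then the temporal difference hits~$v$, $c\nabla u$, and~$u$ directly, and Lemma~\ref{lemma:estimates-Pt} with~$s = \infty$ needs exactly~$\dpt^{(m+2)} u \in C^0([0, T]; L^2(\Omega))$ (respectively~$c\nabla \dpt^{(m+1)} u$, $\dpt^{(m+1)} u$ in~$C^0([0,T];L^2)$), while the Ritz error~$(\Id - \Rh)v$ appears only under~$\Pt$ and is tamed by the~$C^0[0, T]$-stability of Lemma~\ref{lemma:stab-Pt}, followed by Lemma~\ref{lemma:estimates-Rh} pointwise in time. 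This ordering is precisely the point the paper flags just before the theorem, namely that the stability of~$\Pt$ ``has often been neglected in previous works.'' Incidentally, your reproduction-plus-stability argument merely re-derives the~$s = \infty$ case of Lemma~\ref{lemma:estimates-Pt}, which is already available; the only real issue is to which factor the temporal difference is applied. Swap the order of your splitting and the rest of your proof goes through and coincides with the paper's.
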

\begin{proof}
We first estimate the projection errors. 

Using the triangle inequality, the stability bound in Lemma~\ref{lemma:stab-Pt} for~$\Pt$, and the approximation properties of~$\Pt$ and~$\Rh$ in Lemmas~\ref{lemma:estimates-Pt} and~\ref{lemma:estimates-Rh}, we have
\begin{alignat}{3}
\nonumber
\Norm{\epiv}{C^0([0, T]; L^2(\Omega))} & \le 
\Norm{(\Id - \Pt) v}{C^0([0, T]; L^2(\Omega))} + \Norm{\Pt(\Id - \Rh) v}{C^0([0, T]; L^2(\Omega))} \\
\nonumber
& \lesssim \Norm{(\Id - \Pt) v}{C^0([0, T]; L^2(\Omega))} + \Norm{(\Id - \Rh) v}{C^0([0, T]; L^2(\Omega))} 
\\
\nonumber
& \lesssim \tau^{m + 1} \Norm{\dpt^{(m + 2)} u}{C^0([0, T]; L^2(\Omega))} + h^{\ell + 1}\big( \Norm{v}{C^0([0, T]; H^{\ell + 1}(\Omega))} \\
\label{eq:estimate-epiv}
& \quad + \diam(\Omega)^{\frac12} \Norm{\dpt \gD}{C^0([0, T]; H^{\ell + 1}(\partial \Omega))} \big).
\end{alignat}

As for~$c \nabla \epiu$, we use the triangle 
inequality, the stability properties in Lemma~\ref{lemma:stab-Pt} of~$\Pt$, and the approximation properties of~$\Pt$ and~$\Rh$ in Lemmas~\ref{lemma:estimates-Pt} and~\ref{lemma:estimates-Rh} to obtain
\begin{alignat}{3}
\nonumber
\Norm{c \nabla \epiu }{C^0([0, T]; L^2(\Omega)^d)} 
& \le \Norm{(\Id - \Pt) (c \nabla u)}{C^0([0, T]; L^2(\Omega)^d)} + \Norm{\Pt ( c \nabla (u - \Rh u))}{C^0([0, T]; L^2(\Omega)^d)} \\
\nonumber
& \le \Norm{(\Id - \Pt)(c\nabla u)}{C^0([0, T]; L^2(\Omega)^d)} + \Norm{c \nabla (\Id - \Rh) u}{C^0([0, T]; L^2(\Omega)^d)} \\
\label{eq:estimate-grad-epiu}
& \lesssim \tau^{m+1} \Norm{c \nabla \dpt^{(m + 1)} u}{C^0([0, T]; L^2(\Omega)^d)} + h^{\ell} \Norm{u}{C^0([0, T]; H^{\ell + 1}(\Omega))}.
\end{alignat}

Moreover, the same steps in~\eqref{eq:estimate-epiv} can be used to deduce that
\begin{equation}
\begin{split}
\Norm{\epiu}{C^0([0, T]; L^2(\Omega))} & \lesssim \tau^{m + 1} \Norm{\dpt^{(m + 1)} u}{C^0([0, T]; L^2(\Omega))} + h^{\ell + 1}\big( \Norm{u}{C^0([0, T]; H^{\ell + 1}(\Omega))} \\
\label{eq:estimate-epiu}
& \quad + \diam(\Omega)^{\frac12} \Norm{\gD}{C^0([0, T]; H^{\ell + 1}(\partial \Omega))} \big).
\end{split}
\end{equation}

Finally, using the error splitting in~\eqref{def:eu-ev}, the triangle inequality, estimates~\eqref{eq:estimate-epiv}, \eqref{eq:estimate-grad-epiu}, and~\eqref{eq:estimate-epiu}, and Proposition~\ref{prop:error-estimates-discrete}, the desired result can be easily obtained.
\end{proof}

\begin{remark}[$(p, q)$-explicit estimates]
In this work, we have focused on the~$h$-convergence of the method. Deriving stability bounds and a priori error estimates that are also explicit in the degrees of approximation in space~$(p)$ and time~$(q)$ is also of great interest (see, e.g., \cite{Schotzau_Schwab:2000,Cangiani_Dong_Georgoulis:2017,Antonietti_Mazzieri_Migliorini:2020}). However, in order to avoid suboptimal error estimates,  some milder regularity assumptions are needed. For instance, in the current setting, the constant~$\CS$ in Proposition~\ref{prop:weak-continuous-wave} behaves like~$\CS(q) \lesssim q$ (see~\cite[Eq.~(3.6.4)]{Schwab-book:1998}), eventually leading to a suboptimal estimate. 

To circumvent this, one may bound the term~$(f, \Pi_{q-1}^t \vht)_{\QT}$ different, as follows:
\begin{equation*}
(f, \Pi_{q-1}^t \vht)_{\QT} \le \Norm{f}{L^2(\QT)} \Norm{\Pi_{q-1}^t \vht}{L^2(\QT)} \le \sqrt{T} \Norm{f}{L^2(\QT)} \Norm{\vht}{L^{\infty}(0, T; L^2(\Omega))},
\end{equation*}
which requires~$f \in L^2(\QT)$ and introduces an additional dependence on~$\sqrt{T}$. 

An analogous situation was highlighted in~\cite[Rem.~1]{Dong-Mascotto-Wang:2024}, where a~$q$-explicit analysis of the dG--cG method for the wave equation was carried out.
\eremk
\end{remark}

\subsection{Postprocessed approximation~\texorpdfstring{$\uhtsup$}{uht}\label{sec:postprocessing}}
Taking advantage of the quasi-optimal convergence of~$\vht$, one can define a postprocessed approximation~$\uhtsup \in \mathcal{V}_{h, \tau}^{p, q+ 1}$ as follows:
\begin{equation}
\label{def:uhtsup}
\uhtsup(\cdot, t) = \uht(\cdot, 0) + \int_0^t \vht(\cdot, s) \ds \qquad \forall \bx \in \Omega.
\end{equation}

We consider the case of homogeneous Dirichlet boundary conditions so that~\eqref{eq:space-time-equivalence-L2}, which implies~$\Pi_{q-1}^t \vht = \dpt \uht$, holds. We summarize some important properties of~$\uhtsup$.
\begin{enumerate}[label = (\alph*), ref = (\alph*)]
\item \label{Prop-a} $\uhtsup(\cdot, 0) = \uht(\cdot, 0)$;
\item \label{Prop-b} since~$\Pi_{q-1}^t \vht = \dpt \uht$, for~$n = 1, \ldots, N$, it holds
\begin{equation*}
    \uhtsup(\cdot, \tn) = \uht(\cdot, 0) + \sum_{m = 1}^{n - 1} \int_{\tm}^{t_{m+1}} \vht(\cdot, s) \ds = \uht(\cdot, 0) + \sum_{m = 1}^{n - 1} \int_{\tm}^{t_{m+1}} \dpt \uht(\cdot, s) \ds = \uht(\cdot, \tn);
\end{equation*}
\item \label{Prop-c} $\dpt \uhtsup = \vht$.
\end{enumerate}

We now use the above properties to prove some bounds that we use in Section~\ref{sec:posteriori}.
\begin{lemma}
\label{lemma:uhtsup-uht}
Let~$(\uht, \vht) \in \Vhto \times \Vhto$ be the solution to the discrete space--time formulation~\eqref{eq:space-time-formulation} with homogeneous Dirichlet boundary conditions~$(\gD = 0)$, and~$\uhtsup \in \Vhtqpo$ be given by~\eqref{def:uhtsup}. Then, for~$n =1, \ldots, N$, it holds 
\begin{subequations}
\begin{alignat}{3}
\label{eq:ort-uht-uhtsup}
\text{(if~$q > 1$)} \quad (\uhtsup - \uht, \wht)_{\Qn} & = 0 & & \quad \forall \wht \in \Pp{q-2}{\In} \otimes \Vhpo, \\
\label{eq:bound-Linf-uhtsup-uht}
\Norm{\uhtsup - \uht}{L^{\infty}(\In; L^2(\Omega))} & \le \sqrt{C_{q, \star} \tau_n} \Norm{(\Id - \Pi_{q-1}^t ) \vht}{L^2(\Qn)}, \\
\label{eq:bound-L1-uhtsup-uht}
\Norm{\uhtsup - \uht}{L^1(\In; L^2(\Omega))} & \le \tau_n \Norm{(\Id - \Pi_{q-1}^t) \vht}{L^1(\In; L^2(\Omega))},
\end{alignat}
\end{subequations}
where
\begin{equation}
\label{def:Cqstar}
\displaystyle C_{q, \star} := \begin{cases}
\displaystyle \frac{1}{\pi} & \text{ if } q = 1, \\[1em]
\displaystyle \frac{1}{2\sqrt{(q-1)q}} & \text{ if } q > 1.
\end{cases}
\end{equation}
\end{lemma}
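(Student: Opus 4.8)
The plan is to work with the single function $w := \uhtsup - \uht$ on each space--time slab $\Qn$ and to extract all three statements from two structural facts. First, property~\ref{Prop-c} gives $\dpt \uhtsup = \vht$, and the homogeneous-Dirichlet identity $\Pi_{q-1}^t \vht = \dpt \uht$ (see Remark~\ref{rem:equivalent-formulations}) yields $\dpt w = (\Id - \Pi_{q-1}^t)\vht$; since $\vht{}_{|\Qn} \in \Pp{q}{\In}\otimes\Vhp$, this remainder is the pure degree-$q$ Legendre mode, i.e.\ $\dpt w = \alpha_q L_q$ on $\Qn$ for some $\alpha_q \in \Vhpo$. Second, property~\ref{Prop-b} gives $w(\cdot,\tnmo) = w(\cdot,\tn) = 0$, so $w$ is separable, $w = \alpha_q\, g$ with $g(t) := \int_{\tnmo}^t L_q(s)\ds = \frac{\tau_n}{2(2q+1)}\big(L_{q+1}(t) - L_{q-1}(t)\big)$, a scalar polynomial of degree $q+1$ vanishing at both endpoints of $\In$.

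For the orthogonality~\eqref{eq:ort-uht-uhtsup} I would test against $\wht = p\,\phi_h$ with $p \in \Pp{q-2}{\In}$ and $\phi_h \in \Vhpo$, so that $(w,\wht)_{\Qn} = (\alpha_q,\phi_h)_{\Omega}\,(g,p)_{\In}$. Integrating $(g,p)_{\In}$ by parts in time and using $g(\tnmo) = g(\tn) = 0$ reduces it to $-(g', P)_{\In}$ with $P' = p$ and $P \in \Pp{q-1}{\In}$; since $g' = L_q \perp \Pp{q-1}{\In}$, this vanishes (equivalently, $L_{q+1} - L_{q-1} \perp \Pp{q-2}{\In}$ by Legendre orthogonality). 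The $L^1$ bound~\eqref{eq:bound-L1-uhtsup-uht} is the easiest: because $w(\cdot,\tnmo)=0$, the fundamental theorem of calculus gives $\Norm{w(\cdot,t)}{L^2(\Omega)} \le \int_{\In}\Norm{\dpt w}{L^2(\Omega)}\ds$ for every $t \in \In$, and integrating this pointwise-in-$t$ bound over $\In$ produces the factor $\tau_n$.

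The delicate point is the $L^\infty$ bound~\eqref{eq:bound-Linf-uhtsup-uht} with its explicit constant $C_{q,\star}$. Using separability, $\Norm{w}{L^{\infty}(\In;L^2(\Omega))} = \Norm{\alpha_q}{L^2(\Omega)}\,\Norm{g}{L^{\infty}(\In)}$ and $\Norm{(\Id-\Pi_{q-1}^t)\vht}{L^2(\Qn)} = \Norm{\alpha_q}{L^2(\Omega)}\,\Norm{g'}{L^2(\In)}$, so the claim reduces to the scalar inequality $\Norm{g}{L^{\infty}(\In)} \le \sqrt{C_{q,\star}\tau_n}\,\Norm{g'}{L^2(\In)}$. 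Exploiting that $g$ vanishes at \emph{both} ends of $\In$, an Agmon-type estimate gives $\Norm{g}{L^{\infty}(\In)}^2 \le \Norm{g}{L^2(\In)}\,\Norm{g'}{L^2(\In)}$ (the two-sided boundary conditions remove the usual factor $2$), so it remains to bound $\Norm{g}{L^2(\In)}$ by $\Norm{g'}{L^2(\In)}$. Here the two cases split exactly as in~\eqref{def:Cqstar}: for $q=1$ I would invoke the sharp Poincar\'e inequality $\Norm{g}{L^2(\In)} \le \frac{\tau_n}{\pi}\Norm{g'}{L^2(\In)}$ for $g \in H^1_0(\In)$, giving $C_{1,\star}=\tfrac{1}{\pi}$; for $q>1$ I would instead use~\eqref{eq:ort-uht-uhtsup}, which says $g = (\Id-\Pi_{q-2}^t)g$, together with an $L^2$-projection error estimate that, for the degree-$(q-2)$ projection, yields $\Norm{(\Id-\Pi_{q-2}^t)g}{L^2(\In)} \le \frac{\tau_n}{2\sqrt{(q-1)q}}\Norm{g'}{L^2(\In)}$, hence $C_{q,\star} = \frac{1}{2\sqrt{(q-1)q}}$.

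I expect the main obstacle to be precisely this constant bookkeeping: keeping the Agmon inequality factor sharp (which forces using both homogeneous endpoint conditions, hence property~\ref{Prop-b}) and selecting, for each case, the inequality that produces \emph{exactly} the constant in~\eqref{def:Cqstar}. The role of the orthogonality~\eqref{eq:ort-uht-uhtsup} is not cosmetic: it is what upgrades the fixed Poincar\'e constant $1/\pi$ to the $q$-decaying constant $\frac{1}{2\sqrt{(q-1)q}}$ for $q>1$, so part~\eqref{eq:ort-uht-uhtsup} must be established before, and invoked within, the proof of~\eqref{eq:bound-Linf-uhtsup-uht}.
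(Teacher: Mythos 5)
Your proof is correct and takes essentially the same route as the paper's: the paper likewise proves the orthogonality~\eqref{eq:ort-uht-uhtsup} first by integration by parts in time using the vanishing endpoint values from property~\ref{Prop-b}, then combines the Nagy inequality~$\Norm{\uhtsup-\uht}{L^{\infty}(\In;L^2(\Omega))}^2 \le \Norm{\uhtsup-\uht}{L^2(\Qn)}\Norm{\dpt(\uhtsup-\uht)}{L^2(\Qn)}$ with the sharp Poincar\'e constant~$\tau_n/\pi$ for~$q=1$ and, for~$q>1$, with~\eqref{eq:ort-uht-uhtsup} plus the degree-$(q-2)$ $L^2$-projection error bound yielding exactly~$C_{q,\star}\tau_n$, and obtains~\eqref{eq:bound-L1-uhtsup-uht} from the same single-slab fundamental-theorem/H\"older argument. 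Your only departure is presentational: you reduce everything to scalar inequalities on~$\In$ via the explicit separable form~$\uhtsup-\uht=\alpha_q\,g$ with~$g$ written in Legendre polynomials, which in fact makes the application of the scalar Nagy inequality to the~$L^2(\Omega)$-valued difference fully transparent, whereas the paper applies it directly in Bochner form.
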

\begin{proof}
Let~$\zht \in \Pp{q-1}{\In} \otimes \Vhpo$ and~$n \in \{1, \ldots, N\}$.
If~$q > 1$, the following identity follows from  properties~\ref{Prop-b} and~\ref{Prop-c}, and integration by parts in time:
\begin{equation*}
0 = ((\Id - \Pi_{q-1}^t) \vht, \zht)_{\Qn} = (\dpt (\uhtsup - \uht), \zht)_{\Qn} = - (\uhtsup - \uht, \dpt \zht)_{\Qn},
\end{equation*}
which implies~\eqref{eq:ort-uht-uhtsup}.

Using the Nagy inequality in~\cite[Eq.~(1)]{Nagy:1941}, and the fact that~$(\uhtsup - \uht)(\cdot, t_m) = 0$ for~$m \in \{n-1, n\}$, the following bound can be obtained:
\begin{equation}
\label{eq:Nagy}
\begin{split}
\Norm{\uhtsup - \uht}{L^{\infty}(\In; L^2(\Omega))}^2 & \le \Norm{\uhtsup - \uht}{L^2(\Qn)} \Norm{\dpt(\uhtsup - \uht)}{L^2(\Qn)} \\
& = \Norm{\uhtsup - \uht}{L^2(\Qn)} \Norm{(\Id - \Pi_{q-1}^t) \vht}{L^2(\Qn)}.
\end{split}
\end{equation}

We now distinguish two cases.
\paragraph{Case~$q = 1$.} Bound~\eqref{eq:bound-Linf-uhtsup-uht} can be obtained  from~\eqref{eq:Nagy} and the following Poincar\'e inequality in~1D (see~\cite[\S2]{Payne_Weinberger:1960}):
\begin{equation*}
\Norm{\uhtsup - \uht}{L^2(\Qn)} \le \frac{\tau_n}{\pi} \Norm{(\Id - \Pi_{q-1}^t) \vht}{L^2(\Qn)}.
\end{equation*}

\paragraph{Case~$q > 1$.} Using the orthogonality property~\eqref{eq:ort-uht-uhtsup}, the Cauchy--Schwarz inequality, and the approximation property of~$\Pi_{q-2}^t$
\begin{alignat}{3}
\nonumber
\Norm{\uhtsup - \uht}{L^2(\Qn)}^2 & = (\uhtsup - \uht, \uhtsup - \uht)_{\Qn} \\
\nonumber
& = (\uhtsup - \uht, (\Id - \Pi_{q-2}^t) (\uhtsup - \uht))_{\Qn} \\ 
\nonumber
&\le \Norm{\uhtsup - \uht}{L^2(\Qn)} \Norm{(\Id - \Pi_{q-2}^t) (\uhtsup - \uht)}{L^2(\Qn)} \\
\nonumber
& \le C_{q, \star}\tau_n \Norm{\uhtsup - \uht}{L^2(\Qn)} \Norm{\dpt (\uhtsup - \uht)}{L^2(\Qn)} \\
\label{eq:aux-Linf-q2}
& = C_{q, \star} \tau_n \Norm{\uhtsup - \uht}{L^2(\Qn)} \Norm{(\Id - \Pi_{q-1}) \vht}{L^2(\Qn)},
\end{alignat}
with~$C_{q, \star}$ as in~\eqref{def:Cqstar} (see~\cite[Thm.~3.11]{Schwab-book:1998}). Combining~\eqref{eq:aux-Linf-q2} with~\eqref{eq:Nagy}, we get~\eqref{eq:bound-Linf-uhtsup-uht} for the case~$q > 1$.

As a consequence of properties~\ref{Prop-b}  and~\ref{Prop-c}, $\uhtsup$ can be characterized locally on each time slab~$\Qn$ as follows
\begin{alignat*}{3}
\uhtsup(\cdot, t) = \uht(\cdot, \tn) - \int_t^{\tn} \vht(\cdot, \tn) \ds,
\end{alignat*}
so one can deduce~\eqref{eq:bound-L1-uhtsup-uht} using the H\"older inequality. 
\end{proof}

\begin{remark}[Superconvergence of~$\uhtsup$]
\label{rem:superconvergence}
Using the identity~$u(\cdot, t) = u(\cdot, \tn) - \int_{t}^{\tn} v(\cdot, s) \ds$ for all~$t \in \In$, properties~\ref{Prop-b} and~\ref{Prop-c}, and the H\"older inequality, we have
\begin{alignat*}{3}
\Norm{(u - \uhtsup)(\cdot, t)}{L^2(\Omega)}
& \le \Norm{u - \uht}{L^2(\Sn)} + \tau_n \Norm{v - \vht}{C^0([\tnmo, \tn]; L^2(\Omega))} \qquad  \forall t \in \In.
\end{alignat*}
Thus, by Theorem~\ref{thm:a-priori} and  the well-known superconvergence at the discrete times~$\tn$ for~$q > 1$ (see~\cite[Thm.~8]{French_Peterson:1996}), we deduce that
\begin{alignat*}{3}
\Norm{u - \uhtsup}{C^0([0, T]; L^2(\Omega))} & \le \max_{n \in \{1, \ldots, N\}} \Norm{u - \uht}{L^2(\Sn)} + \tau \Norm{v - \vht}{C^0([0,T]; L^2(\Omega))} \\
& \lesssim \tau^{2q} + h^{p + 1} + \tau^{q + 2} + \tau h^{p + 1},
\end{alignat*}
where the leading term is of order~$\mathcal{O}(\tau^{q + 2} + h^{p + 1})$. Therefore, $\uhtsup$ superconverges with an extra order in time for~$q > 1$. 

In~\cite[\S3.2]{Bause_etal:2020}, some lifting reconstructions of~$\uht$ and~$\vht$ for the underintegrated version of~\eqref{eq:space-time-formulation} are used to obtain superconvergent approximations of~$u$ and~$v$. Such liftings are computed independently for~$\uht$ and~$\vht$ by means of some correction terms using the values at the Gauss-Lobato points.
On the other hand, the construction of~$\uhtsup$ combines information of~$\uht$ and~$\vht$, which is somehow related to the choice of~$(\Piht u, \Piht v)$ in the error analysis in~\cite[Eqs.~(3.5)--(3.7)]{Karakashian_Makridakis:2005}.
\eremk
\end{remark}
The postprocessed approximation~$\uhtsup$ will also play an important role in the~\emph{a posteriori} error estimate in next section. We emphasize that the convergence properties of~$\uhtsup$ are not used in Section~\ref{sec:posteriori}, but only semidiscrete-in-time versions of the properties derived in Lemma~\ref{lemma:uhtsup-uht}. 
\section{\emph{A posteriori} error estimates for the semidiscrete-in-time scheme\label{sec:posteriori}}
In the spirit of~\cite[\S3]{Dong-Mascotto-Wang:2024}, we now derive an \emph{a posteriori} error estimate for the semidiscrete-in-time scheme.

For the sake of simplicity, along this section, we assume that the spatial domain~$\Omega$ satisfies Assumption~\ref{asm:regularity-Omega}, and
\begin{equation}
\label{eq:simple-regularity-assumptions}
\gD = 0, \quad c \in \IR^+, \quad u_0 \in H^2(\Omega) \cap H_0^1(\Omega), \quad v_0 \in H_0^1(\Omega), \quad f \in H^1(0, T; L^2(\Omega)).
\end{equation}
Then, the continuous weak solution~$u$ to~\eqref{eq:model-problem} satisfies (see~\cite[Thm.~5 in~\S7.2.3]{Evans:2010} and~\cite[Prop.~3.1]{Dong-Mascotto-Wang:2024})
\begin{equation}
\label{eq:strong-regularity-u}
u \in L^{\infty}(0, T; H^2(\Omega)) \cap W^1_{\infty}(0, T; H_0^1(\Omega)) \cap W^2_{\infty}(0, T; L^2(\Omega)) \cap W^3_{\infty}(0, T; H^{-1}(\Omega)).
\end{equation}

We define the following auxiliary spaces:
\begin{equation}
\begin{split}
\VVtq & := \{\varphi \in C^0([0, T]; H_0^1(\Omega))\, :\, \varphi(\cdot, 0) = 0, \text{ and } \varphi_{|_{\Qn}} \in \Pp{q}{\In} \otimes H_0^1(\Omega), \text{ for } n = 1, \ldots, N\}, \\
\WWtqmo & := \{\varphi \in L^2(0, T; H_0^1(\Omega))\, :\, \varphi_{|_{\Qn}} \in \Pp{q - 1}{\In} \otimes H_0^1(\Omega), \text{ for } n = 1, \ldots, N\},
\end{split}
\end{equation}
and consider the following semidiscretization in time of the Hamiltonian formulation~\eqref{eq:model-problem-hamiltonian}: find~$\ut \in u_0 + \VVtq$ and~$\vt \in v_0 + \VVtq$, such that
\begin{subequations}
\label{eq:semi-discrete-in-time}
\begin{alignat}{3}
(\vt, \zt)_{\QT} - (\dpt \ut, \zt)_{\QT} & = 0 & & \qquad \forall \zt \in \WWtqmo, \\
(\dpt \vt, \wt)_{\QT} - c^2 (\Delta \ut, \wt)_{\QT} & = (f, \wt)_{\QT} & & \qquad \forall \wt \in \WWtqmo.
\end{alignat}
\end{subequations}

Let also~$\utsup$ denote the semidiscrete version of the postprocessed variable in Section~\ref{sec:postprocessing}, whose properties~\ref{Prop-a}, \ref{Prop-b}, and~\ref{Prop-c} play a key role in the proof of next theorem.

\begin{theorem}[\emph{A posteriori} error estimate]
\label{thm:a-posteriori}
Let the regularity assumptions in~\eqref{eq:simple-regularity-assumptions} hold. Let also~$(u, v)$ be the continuous weak solution to~\eqref{eq:model-problem-hamiltonian}, and~$(\ut, \vt)$ be the solution to the semidiscrete-in-time formulation~\eqref{eq:semi-discrete-in-time}.
Then, the following estimate holds:
\begin{alignat*}{3}
& \Norm{u - \ut}{C^0([0, T]; L^2(\Omega))} \\
& \qquad \le \max_{n \in \{1, \ldots, N\}} \sqrt{C_{q, \star} \tau_n} \Norm{(\Id - \Pi_{q-1}^t) \vt}{L^2(\In; L^2(\Omega))} \\
& \quad \qquad + 2 \Cp(q-1) \sum_{n = 1}^{m - 1} \tau_n \Norm{(\Id - \Pi_{q-1}^t) f}{L^1(\In; L^2(\Omega))} + 2 \tau_m \Norm{(\Id - \Pi_{q-1}^t) f}{L^1(I_m; L^2(\Omega))} \\
& \quad \qquad + 2 c^2 C_{\bullet}(q)  \sum_{n = 1}^{m = 1} \tau_n  \Norm{(\Id - \Pi_{q-1}^t) \Delta \vt}{L^1(\In; L^2(\Omega))} 
+ 2 c^2 \tau_m \Norm{(\Id - \Pi_{q-1}^t) \Delta \vt}{L^1(I_m; L^2(\Omega))} \\
& \quad \qquad + 2 c^2 \Cp(q-1) \sum_{n = 1}^{m - 1} \tau_n \Norm{(\Id - \Pi_{q-1}^t ) \Delta \ut}{L^1(\In; L^2(\Omega))} + 2 c^2 \tau_m \Norm{(\Id - \Pi_{q-1}^t) \Delta \ut}{L^1(I_m; L^2(\Omega))}, 
\end{alignat*}
where~$I_m$ is the interval where~$\Norm{(\utsup - \ut)(\cdot, t)}{L^2(\Omega))}$ takes its maximum, $C_{q, \star}$ as in~\eqref{def:Cqstar}, and
$$\Cp(s) := \begin{cases}
\pi^{-\frac12} & \text{ if } s = 0, 1, 2,\\[0.8em]
\frac{1}{(s - 2)\pi} & \text{ if } s \geq 3,
\end{cases} \qquad \qquad 
C_{\bullet}(q) := \begin{cases}
|\tm - \tnmo| & \text{ if } q = 1, \\[0.8em]
\frac{\Cp(q - 2) \tau_n}{2} & \text{ if } q \geq 2.
\end{cases}$$
\end{theorem}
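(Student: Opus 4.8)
The plan is to route the error through the postprocessed approximation, writing $u - \ut = (u - \utsup) + (\utsup - \ut)$, and to bound the two contributions separately. The second one is purely a property of the discrete pair: by the semidiscrete-in-time analogue of~\eqref{eq:bound-Linf-uhtsup-uht} in Lemma~\ref{lemma:uhtsup-uht}, $\Norm{\utsup - \ut}{L^{\infty}(\In; L^2(\Omega))} \le \sqrt{C_{q, \star}\tau_n}\,\Norm{(\Id - \Pi_{q-1}^t)\vt}{L^2(\In; L^2(\Omega))}$ for each $n$, and taking the maximum over $n$ gives exactly the first term on the right-hand side. It then remains to control $\Norm{u - \utsup}{C^0([0, T]; L^2(\Omega))}$.

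For this I would first identify the strong residual satisfied by $w := u - \utsup$. Using property~\ref{Prop-c} ($\dpt\utsup = \vt$), the continuous equation $\dptt u - c^2\Delta u = f$, and the semidiscrete scheme~\eqref{eq:semi-discrete-in-time}, one checks that $w$ solves $\dptt w - c^2\Delta w = R$ in $\QT$ with vanishing initial data $w(\cdot, 0) = \dpt w(\cdot, 0) = 0$ (the latter because $\ut(\cdot, 0) = u_0$ and $\vt(\cdot, 0) = v_0$). The key computation is the simplification of $R = f - \dpt\vt + c^2\Delta\utsup$: since $\dpt\vt \in \Pp{q-1}{\In}$ is left invariant by $\Pi_{q-1}^t$, the scheme yields $\dpt\vt = \Pi_{q-1}^t f + c^2\Pi_{q-1}^t\Delta\ut$, whence, writing $\Delta\utsup - \Pi_{q-1}^t\Delta\ut = \Delta(\utsup - \ut) + (\Id - \Pi_{q-1}^t)\Delta\ut$,
\[
R = (\Id - \Pi_{q-1}^t) f + c^2\Delta(\utsup - \ut) + c^2(\Id - \Pi_{q-1}^t)\Delta\ut .
\]
All three contributions are then either time-projection errors or, for the middle one, an antiderivative of the projection error $(\Id - \Pi_{q-1}^t)\Delta\vt$ that vanishes at the nodes (by properties~\ref{Prop-b} and~\ref{Prop-c}). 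These three structures mirror the $f$-, $\Delta\vt$-, and $\Delta\ut$-terms in the statement.

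To extract the $L^2$ norm of $w$ at the time $t^\star$ realizing $\Norm{u - \utsup}{C^0([0, T]; L^2(\Omega))}$, lying in some slab $I_m$, I would use a Baker-type test function: set $\phi(\cdot, t) := -\int_t^{t^\star} w(\cdot, s)\,\ds$ for $t \le t^\star$, so that $\dpt\phi = w$, $\phi(\cdot, t^\star) = 0$, and $\phi(\cdot, t) \in H_0^1(\Omega)$. Testing $\dptt w - c^2\Delta w = R$ against $\phi$ on $(0, t^\star)$ and integrating by parts once in time and once in space gives the energy identity
\[
\tfrac12\Norm{w(\cdot, t^\star)}{L^2(\Omega)}^2 + \tfrac{c^2}{2}\Norm{\nabla\phi(\cdot, 0)}{L^2(\Omega)^d}^2 = -\int_0^{t^\star}(R, \phi)_{\Omega}\,\dt ,
\]
so that $\tfrac12\Norm{w(\cdot, t^\star)}{L^2(\Omega)}^2 \le \int_0^{t^\star}\Norm{R}{L^2(\Omega)}\Norm{\phi}{L^2(\Omega)}\,\dt$. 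I would then split $\int_0^{t^\star} = \sum_{n=1}^{m-1}\int_{\In} + \int_{I_m\cap(0, t^\star)}$. On each complete slab $\In$ ($n \le m-1$) the orthogonality of $\Pi_{q-1}^t$ lets me replace $\phi$ by $(\Id - \Pi_{q-1}^t)\phi$ in the $f$- and $\Delta\ut$-terms, while~\eqref{eq:ort-uht-uhtsup} (giving $\Delta(\utsup - \ut) \perp \Pp{q-2}{\In}$ for $q > 1$) lets me replace it by $(\Id - \Pi_{q-2}^t)\phi$ in the $\Delta\vt$-term; the factors $\Norm{(\Id - \Pi_{q-1}^t)\phi}{L^{\infty}(\In)}$ and $\Norm{(\Id - \Pi_{q-2}^t)\phi}{L^{\infty}(\In)}$ are then controlled, through time-Poincar\'e estimates with the explicit constants $\Cp(q-1)$ and $\Cp(q-2)$, by $\tau_n\Norm{w}{L^{\infty}(\In; L^2(\Omega))}$. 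Combined with the $L^1$ bound on $\Delta(\utsup - \ut)$ coming from the semidiscrete analogue of~\eqref{eq:bound-L1-uhtsup-uht}, this produces the slab-wise weights $\Cp(q-1)\tau_n$ and $C_{\bullet}(q)\tau_n$. On the terminal slab $I_m$ the orthogonality is unavailable, so I would estimate the integral directly via $\Norm{\phi}{L^{\infty}(I_m)} \lesssim \tau_m\Norm{w}{C^0([0,T];L^2(\Omega))}$, yielding the remaining $\tau_m$-weighted terms. Since $t^\star$ maximizes $\Norm{w(\cdot, t)}{L^2(\Omega)}$, every occurrence of $\Norm{w}{L^{\infty}(\cdot)}$ on the right is bounded by $\Norm{w}{C^0([0,T];L^2(\Omega))} = \Norm{w(\cdot, t^\star)}{L^2(\Omega)}$, which is divided out to close the estimate.

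The main obstacle I anticipate is the slab-wise bookkeeping in the last step: one must use the orthogonality of the projection errors against $\phi$ to turn what would otherwise be a crude, $T$-proportional bound into the $\tau_n$-weighted, Gr\"onwall-free sums, and one must track the precise time-Poincar\'e constants $\Cp$ and $C_{\bullet}$ — including the degenerate case $q = 1$, where no $\Pp{q-2}{\In}$ orthogonality is available and the $\Delta\vt$-term must be handled by a direct lever-arm estimate (the source of $C_{\bullet}(1) = |\tm - \tnmo|$). The terminal slab also deserves separate care, since the projections act over whole slabs whereas the integration stops at $t^\star$; getting the sharp power of $\tau_m$ there is the most delicate point of the calculation.
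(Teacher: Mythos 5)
Your proposal is correct and follows essentially the same route as the paper's proof: the same triangle-inequality split through $\utsup$ with the semidiscrete analogue of Lemma~\ref{lemma:uhtsup-uht}, the same Baker-type test function anchored at the maximizing time (your $\phi$ is $-\wt$ in the paper's notation), the identical energy identity with vanishing initial and terminal boundary terms, and the same slab-wise orthogonality bookkeeping producing the constants $\Cp$ and $C_{\bullet}$, including the $q = 1$ lever-arm case and the separate treatment of the terminal slab. The only cosmetic caveat is that to obtain the stated constants $\Cp$ one should subtract the best $L^{\infty}(\In)$ polynomial approximation of the test function (the paper's $\zeta_{\tau}$ and $\omega_{\tau}$) rather than literally its $L^2$-projection error $(\Id - \Pi_{q-1}^t)\phi$; since the orthogonality you invoke permits subtracting any polynomial of the relevant degree, this is an immediate fix.
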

\begin{proof}
Since~$u - \utsup$ is continuous in time, there exists~$\xi \in \overline{I_m}$ for some~$m \in \{1, \ldots, N\}$ 
such that
\begin{equation}
\label{def:xi}
\Norm{u - \utsup}{C^0([0, T]; L^2(\Omega))} = \Norm{(u - \utsup)(\cdot, \xi)}{L^2(\Omega)}.
\end{equation}
Then, we define the following auxiliary function:
\begin{equation}
\label{def:wt}
\wt(\cdot, t) = \int_t^{\xi} (u - \utsup)(\cdot, s) \quad \text{ in } L^2(\Omega),
\end{equation}
which satisfies the following three important properties:
\begin{subequations}
\begin{alignat}{3}
\label{eq:property-wt-1}
\wt(\cdot, \xi) & = 0, \\
\label{eq:property-wt-2}
\wt(\cdot, 0) & = \int_0^{\xi} (u - \utsup)(\cdot, s) & & \quad \text{ in } L^2(\Omega), \\
\label{eq:property-wt-3}
\dpt \wt(\cdot, t) & = - (u - \utsup)(\cdot, t) & & \quad \text{ in } L^2(\Omega).
\end{alignat}
\end{subequations}

Furthermore, due to the identity~$z(t) = z(\xi) - \int_t^{\xi} z' \ds$ and property~\eqref{eq:property-wt-1}, for~$n = 1, \ldots, m-1$, it holds
\begin{alignat}{3}
\label{eq:bound-wt}
\Norm{\wt}{L^{\infty}(\In; L^2(\Omega))} \le |\xi - \tnmo| \Norm{\dpt \wt}{L^{\infty}(\In; L^2(\Omega))} \le |t_m - \tnmo| \Norm{u - \utsup}{L^{\infty}(0, T; L^2(\Omega))},
\end{alignat}
and
\begin{equation}
\label{eq:bound-wt-m}
\Norm{\wt}{L^{\infty}(t_{m-1}, \xi; L^2(\Omega))} \le \Norm{\wt}{L^{\infty}(I_m; L^2(\Omega))} \le \tau_m \Norm{\dpt \wt}{L^{\infty}(I_m; L^2(\Omega))}.
\end{equation}

By the regularity of~$u$ in~\eqref{eq:strong-regularity-u}, and the semidiscrete-in-time formulation~\eqref{eq:semi-discrete-in-time}, the following error equations hold in the sense of~$L^2(0, T; L^2(\Omega))$
\begin{subequations}
\label{eq:pointwise-errors}
\begin{alignat}{3}
\label{eq:pointwise-errors-1}
\Pi_{q-1}^t \vt - \dpt \ut & = 0, \\
\label{eq:pointwise-errors-2}
\dpt (v - \vt) - c^2 \Delta(u - \ut) & = (\Id - \Pi_{q-1}^t )f + c^2 \Delta (\Id - \Pi_{q-1}^t) \ut.
\end{alignat}
\end{subequations}
Multiplying~\eqref{eq:pointwise-errors-2} by~$\wt$ in~\eqref{def:wt}, adding and subtracting suitable terms, and integrating in time from~$0$ to~$\xi$, we get
\begin{equation}
\label{eq:integrated-identity-posteriori}
\begin{split}
& \int_0^{\xi} \big(\dpt(v - \vt), \wt)_{\Omega} + c^2 (\nabla (u - \utsup), \nabla \wt)_{\Omega}\big) \ds \\
& \qquad = \int_{0}^{\xi} \big((\Id - \Pi_{q-1}^t f, \wt)_{\Omega} + c^2 (\Delta (\utsup - \ut), \wt)_{\Omega} + c^2 ((\Id - \Pi_{q-1}^t) \Delta \ut, \wt)_{\Omega}\big)\ds.
\end{split}
\end{equation}
We consider each term separately.

Integration by parts in time gives
\begin{equation}
\label{eq:aux-identity-(v-vt)}
\begin{split}
\int_0^{\xi} (\dpt(v - \vt), \wt)_{\Omega} \ds & = \big( (v - \vt)(\cdot, \xi), \wt(\cdot, \xi)\big)_{\Omega} - \big((v - \vt)(\cdot, 0), \wt(\cdot, 0) \big)_{\Omega} \\
& \quad - \int_0^{\xi} (v - \vt, \dpt \wt)_{\Omega}\, \ds.
\end{split}
\end{equation}
The first term on the right-hand side of~\eqref{eq:aux-identity-(v-vt)} vanishes due to property~\eqref{eq:property-wt-1} of~$\wt$, whereas the second term vanishes since~$v(\cdot, 0) = \vt(\cdot, 0) = v_0$ in~$L^2(\Omega)$.
As for the third term, we can use property~\eqref{eq:property-wt-3} of~$\wt$, the definition of~$\xi$ in~\eqref{def:xi}, and the fact that~$\utsup(\cdot, 0) = \ut(\cdot, 0) = u_0$ to obtain
\begin{alignat}{3}
\nonumber
-\int_0^{\xi} (v - \vt, \dpt \wt)_{\Omega}\, \ds & = \int_0^{\xi} (v - \vt, u - \utsup)_{\Omega}\, \ds \\
\nonumber
& = \int_{0}^{\xi} (\dpt(u - \utsup), u - \utsup)_{\Omega}\, \ds \\
\nonumber
& = \frac12 \Big(\Norm{(u - \utsup)(\cdot, \xi)}{L^2(\Omega)}^2 - \Norm{(u - \utsup)(\cdot, 0)}{L^2(\Omega)}^2\Big) \\
\label{eq:aux-posteriori-1}
& = \frac12 \Norm{u - \utsup}{C^0([0, T]; L^2(\Omega))}.
\end{alignat}

Using property~\eqref{eq:property-wt-3} of~$\wt$, the fact that~$\dpt \utsup = \vt$ and~$v = \dpt u$, and integrating in time, we obtain
\begin{alignat}{3}
\nonumber
c^2 \int_0^{\xi} (\nabla (u - \utsup), \nabla \wt)_{\Omega} \ds & = -c^2 \int_0^{\xi} (\nabla \dpt \wt, \nabla \wt)_{\Omega} \ds \\
\label{eq:aux-posteriori-2}
& = -\frac{c^2}{2} \Big( \Norm{\nabla \wt(\cdot, \xi)}{L^2(\Omega)^d}^2 - \Norm{\nabla \wt(\cdot, 0)}{L^2(\Omega)^d}^2\Big),
\end{alignat}
where the first term on the right-hand side vanishes due to property~\eqref{eq:property-wt-1} of~$\wt$.

Combining identities~\eqref{eq:aux-posteriori-1} and~\eqref{eq:aux-posteriori-2} with~\eqref{eq:integrated-identity-posteriori}, we get
\begin{alignat}{3}
\nonumber
& \frac12 \Norm{u - \utsup}{C^0([0, T]; L^2(\Omega))}^2 + \frac{c^2}{2} \Norm{\nabla \wt(\cdot, 0)}{L^2(\Omega)^d}^2 \\
\nonumber
& \quad = \int_0^{\xi} \Big(((\Id - \Pi_{q-1}^t) f, \wt)_{\Omega} + c^2 (\Delta ( \utsup - \ut), \wt)_{\Omega} + c^2 ((\Id - \Pi_{q-1}^t) \Delta \ut, \wt)_{\Omega}\Big) \ds \\
\label{eq:final-identity-posteriori}
& \quad =: R_1 + R_2 + R_3.
\end{alignat}
We now bound the terms~$\{R_i\}_{i = 1}^3$.

Let~$\zeta_{\tau} \in \WWtqmo$ be such that, on each time slab~$\Qn$, $\zeta_{\tau}(\bx, \cdot)$ is the best polynomial in time approximation of~$\wt(\bx, \cdot)$ in~$L^{\infty}(\In)$ for a.e.~$\bx \in \Omega$ and~$n = 1, \ldots, N$. Using the H\"older inequality, the orthogonality and approximation properties of~$\Pi_{q-1}^t$, and bounds~\eqref{eq:bound-wt} and~\eqref{eq:bound-wt-m}, for any function~$\omega_{\tau} \in \WWtqmo$, we have
\begin{alignat*}{3}
R_1 = \int_0^{\xi} ((\Id - \Pi_{q-1}^t) f, \wt)_{\Omega}\, \ds & = \sum_{n = 1}^{m-1} \int_{\tnmo}^{\tn} ((\Id - \Pi_{q-1}^t) f, \wt - \zeta_{\tau})_{\Omega} + \int_{t_{m-1}}^{\xi} ((\Id - \Pi_{q-1}^t) f, \wt)_{\Omega} \\
& \le \Cp(q-1) \sum_{n = 1}^{m - 1} \tau_n \Norm{(\Id - \Pi_{q-1}^t) f}{L^1(\In; L^2(\Omega))} \Norm{\dpt \wt}{L^{\infty}(\In; L^2(\Omega))} \\
& \quad + \tau_m\Norm{(\Id - \Pi_{q-1}^t) f}{L^1(I_m; L^2(\Omega))} \Norm{\dpt \wt}{L^{\infty}(I_m; L^2(\Omega))},
\end{alignat*}
where the explicit expression of the constant~$\Cp$ of the best polynomial approximation in~$L^{\infty}(\In)$ can be found, for instance, in~\cite[Lemma~3.4]{Dong-Mascotto-Wang:2024}.

As for~$R_2$, we distinguish two cases.
\paragraph{Case~$q = 1$.} 
We can use~\eqref{eq:bound-L1-uhtsup-uht} together with the H\"older inequality, and bounds~\eqref{eq:bound-wt} and~\eqref{eq:bound-wt-m} to obtain
\begin{alignat*}{3}
R_2 & = c^2 \int_0^{\xi} (\Delta (\utsup - \ut), \wt)_{\Omega}\, \ds \\
& \le c^2 \sum_{n = 1}^{m - 1} \Norm{\Delta(\utsup - \ut)}{L^1(\In; L^2(\Omega))} \Norm{\wt}{L^{\infty}(\In; L^2(\Omega))} \\
& \quad + c^2 \Norm{\Delta(\utsup - \ut)}{L^1(I_m; L^2(\Omega))} \Norm{\wt}{L^{\infty}(t_{m - 1}, \xi; L^2(\Omega))} \\
& \le c^2 \sum_{n = 1}^{m - 1} |t_m - \tnmo| \tau_n \Norm{(\Id - \Pi_{q-1}^t) \Delta \vt}{L^1(\In; L^2(\Omega))}\Norm{\dpt \wt}{L^{\infty}(0, T; L^2(\Omega))} \\
& \quad + c^2 \tau_m^2 \Norm{(\Id - \Pi_{q-1}^t) \Delta \vt}{L^1(I_m; L^2(\Omega))} \Norm{\dpt \wt}{L^{\infty}(0, T; L^2(\Omega))}.
\end{alignat*}

\paragraph{Case~$q > 1$.} 
Let~$\omega_{\tau} \in \WWtqmt$ be such that, on each time slab~$\Qn$, $\omega_{\tau}(\bx, \cdot)$ is the best polynomial in time approximation of~$\wt(\bx, \cdot)$ in~$L^{\infty}(\In)$ for a.e.~$\bx \in \Omega$ and~$n = 1, \ldots, N$.
The orthogonality properties of~$(\utsup - \ut)$ for~$q > 1$ (see~\eqref{eq:ort-uht-uhtsup}) allows us to use the Poincar\'e inequality in~$L^1(\In)$ with constant~$1/2$ (see~\cite[Thm.~3.1]{Acosta_Duran:2004}), which, combined with the H\"older inequality, and bounds~\eqref{eq:bound-wt} and~\eqref{eq:bound-wt-m}, leads to the following bound:
\begin{alignat*}{3}
R_2 = c^2 \int_0^{\xi} (\Delta (\utsup - \ut), \wt)_{\Omega}\, \ds 
& = c^2 \sum_{n = 1}^{m - 1} (\Delta (\utsup - \ut), \wt - \omega_{\tau})_{\Qn} + c^2 \int_{\tmmo}^{\xi} (\Delta(\utsup - \ut), \wt)_{\Omega}\, \ds \\
& \le c^2 \sum_{n = 1}^{m - 1} \Norm{\Delta(\utsup - \ut)}{L^1(\In; L^2(\Omega))} \Norm{\wt - \omega_{\tau}}{L^{\infty}(\In; L^2(\Omega))} \\
& \quad + c^2 \Norm{\Delta(\utsup - \ut)}{L^1(I_m; L^2(\Omega))} \Norm{\wt}{L^{\infty}(t_{m - 1}, \xi; L^2(\Omega))} \\
& \le \frac{c^2 \Cp(q-2)}{2} \sum_{n = 1}^{m - 1} \tau_n^2 \Norm{(\Id - \Pi_{q-1}^t) \Delta \vt}{L^1(\In; L^2(\Omega))}\Norm{\dpt \wt}{L^{\infty}(0, T; L^2(\Omega))} \\
& \quad + \frac{c^2 \tau_m^2}{2} \Norm{(\Id - \Pi_{q-1}^t) \Delta \vt}{L^1(I_m; L^2(\Omega))} \Norm{\dpt \wt}{L^{\infty}(0, T; L^2(\Omega))}.
\end{alignat*}

The term~$R_3$ can be bounded similarly to~$R_1$, as follows:
\begin{alignat*}{3}
R_3 = c^2 \int_0^{\xi} ((\Id - \Pi_{q-1}^t) \Delta \ut, \wt)_{\Omega} \ds 
& \le c^2 \Cp(q - 1) \sum_{n = 1}^{m - 1} \tau_n \Norm{(\Id - \Pi_{q-1}^t) \Delta \ut}{L^1(\In; L^2(\Omega))} \Norm{\dpt \wt}{L^{\infty}(\In; L^2(\Omega))} \\
& \quad + c^2 \tau_m\Norm{(\Id - \Pi_{q-1}^t) \Delta \ut}{L^1(I_m; L^2(\Omega))} \Norm{\dpt \wt}{L^{\infty}(I_m; L^2(\Omega))}.
\end{alignat*}

Combining these bounds on~$\{R_i\}_{i = 1}^3$ with~\eqref{eq:final-identity-posteriori}, we can estimate~$\Norm{u - \utsup}{C^0([0, T]; L^2(\Omega))}$.

In order to estimate~$\Norm{\utsup - \ut}{C^0([0, T]; L^2(\Omega))}$,  we use~\eqref{eq:bound-Linf-uhtsup-uht} in Lemma~\ref{lemma:uhtsup-uht} (adapted to the semidiscrete case) to obtain
\begin{equation*}
\Norm{\utsup - \ut}{C^0([0, T]; L^2(\Omega))} \le \max_{n \in \{1, \ldots, N\}} \sqrt{C_{q, \star}\tau_n} \Norm{(\Id - \Pi_{q-1}^t) \vt}{L^2(\In; L^2(\Omega))}.
\end{equation*}
The result then follows from the triangle inequality.
\end{proof}
\section{Numerical experiments\label{sec:numerical-exp}}
In this section, we present some numerical experiments that validate our theoretical results. 
To do so, we use an object-oriented MATLAB implementation of the space--time FEM~\eqref{eq:space-time-formulation} (\texttt{Method I}), as well as the version using~\eqref{eq:space-time-equivalence-L2} (\texttt{Method II}). We recall that both formulations coincide in the case of homogeneous Dirichlet boundary conditions (see Remark~\ref{rem:equivalent-formulations}).
Several numerical experiments are available in literature (see~\cite{French_Peterson:1996,Zhao-Li:2016,Bause_etal:2020}); 
thus, we restrict ourselves to highlighting the aspects of the scheme related to this work.

We focus on the following error quantities:
\begin{equation}
\label{def:error-quantities}
\begin{split}
& \Norm{u - \uht}{C^0([0, T]; L^2(\Omega))}, \qquad \Norm{u - \uhtsup}{C^0([0, T]; L^2(\Omega))}, \\ & \Norm{v - \vht}{C^0([0, T]; L^2(\Omega))}, \qquad \Norm{\nabla (u - \uht)}{C^0([0, T]; L^2(\Omega)^d)},
\end{split}
\end{equation}
which are approximated taking the maximum value of the spatial errors on a set of uniformly distributed points on each time interval~$\In$.

\subsection{Nonhomogenous Dirichlet boundary conditions\label{sec:nonhomogeneous-dirichlet-experiment}}
Let the space--time domain~$\QT$ be given by~$(0, 1)^2 \times (0, 1)$. We consider the~$(2 + 1)$-dimensional problem~\eqref{eq:model-problem-hamiltonian} with initial conditions, Dirichlet boundary conditions, and source term chosen so that the exact solution is given by (cf.~\cite[Example~3.2]{Walkington:2014}) 
\begin{equation}
\label{eq:solution-nonhomogeneous-Dirichlet}
u(x, y, t) = \cos(\sqrt{2} \pi t) \cos(\pi x) \sin(\pi y), \qquad v(x, y, t) = -\sqrt{2} \pi \sin(\sqrt{2} \pi t) \cos(\pi x) \sin(\pi y).
\end{equation}

The corresponding Dirichlet datum~$\gD$ is nonzero when~$x = 0$ or~$x = 1$, and cannot be approximated exactly using Lagrange interpolation.

\paragraph{\texorpdfstring{$h$}{h}-convergence.} 
We first assess the~$h$-convergence of \texttt{Method I}. In Figure~\ref{fig:h-convergence}, we show (in~\emph{log-log} scale) the errors in~\eqref{def:error-quantities} corresponding to the problem with exact solution in~\eqref{eq:solution-nonhomogeneous-Dirichlet} for a set of structured simplicial meshes and approximations in space of degree~$p = 1, 2, 3$. In order to let the spatial error dominate, we have set a uniform time step~$\tau = 3.125 \times 10^{-2}$ and approximations in time of degree~$q = 4$.
In accordance with Theorem~\ref{thm:a-priori}, quasi-optimal convergence rates are observed in all cases, namely, of order~$\mathcal{O}(h^{p+1})$ for the~$C^0([0, T]; L^2(\Omega))$ errors, and of order~$\mathcal{O}(h^p)$ for the~$C^0([0, T]; H^1(\Omega))$ error.
\begin{figure}[!ht]
\centering
\includegraphics[width = 0.43\textwidth]{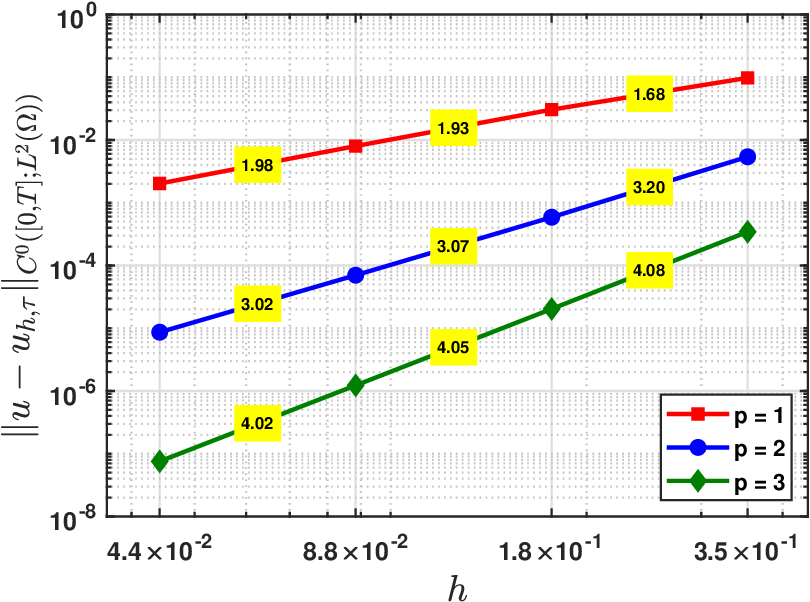}
\hspace{0.2in}
\includegraphics[width = 0.43\textwidth]{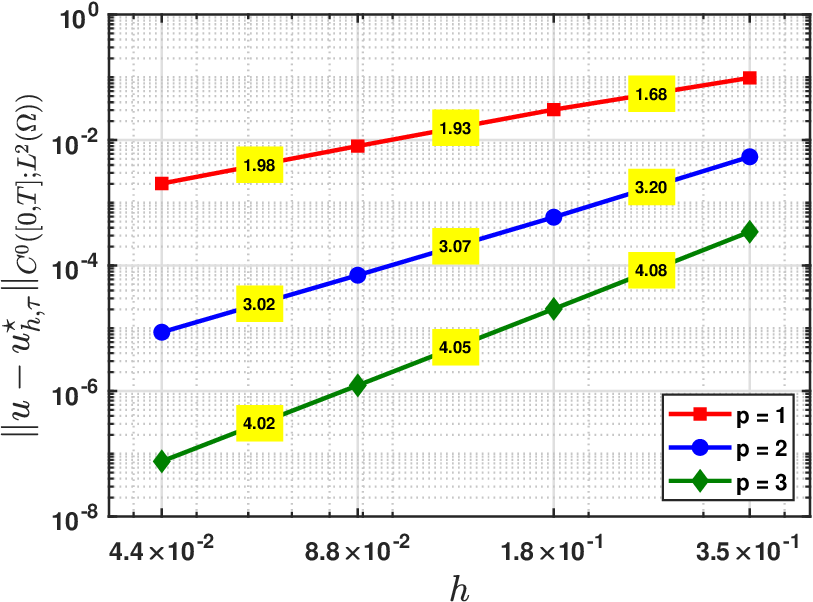}\\
\includegraphics[width = 0.43\textwidth]{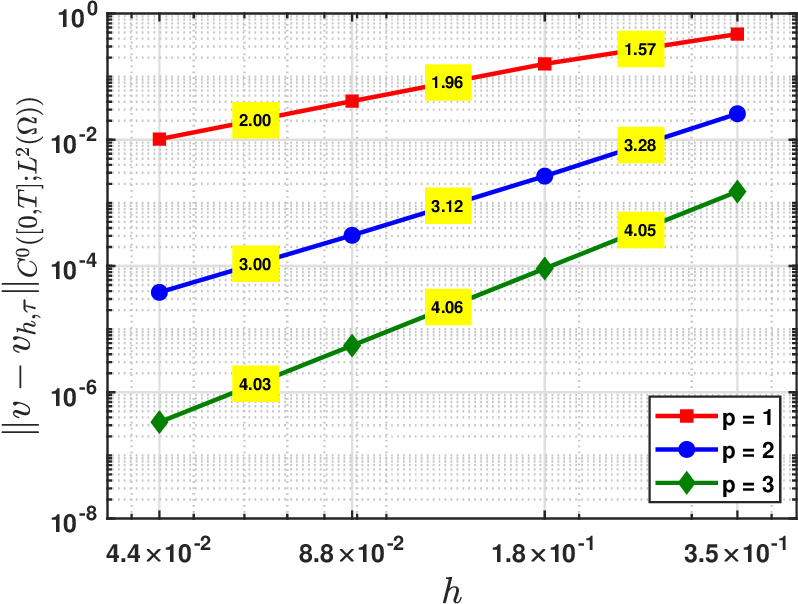}
\hspace{0.2in}
\includegraphics[width = 0.43\textwidth]{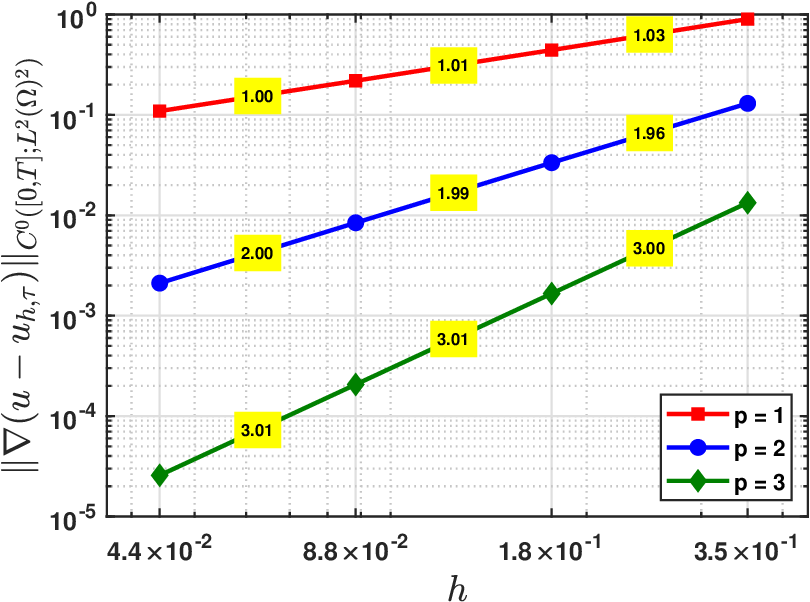}
\caption{$h$-convergence (in \emph{log-log} scale) of the errors in~\eqref{def:error-quantities} for~\texttt{Method I} corresponding to the problem with exact solution~$(u, v)$ in~\eqref{eq:solution-nonhomogeneous-Dirichlet}. The numbers in the yellow boxes are the empirical convergence rates. \label{fig:h-convergence}}
\end{figure}

\paragraph{Different formulations.} 
Before presenting the results for the~$\tau$-convergence of the method, which better illustrate the differences between~\texttt{Method I} and~\texttt{Method II}, 
we present a numerical experiment that stresses the fact that they both produce quantitatively different solutions for nonhomogeneous Dirichlet boundary conditions.

In Figure~\ref{fig:comparison-Grad-L2}, we plot the difference of the discrete solutions corresponding to~\texttt{Method I} and~\texttt{Method II} at time~$t_1 = 1/8$ using  a structured simplicial mesh with~$h  \approx 1.77 \times 10^{-1}$, a uniform time step~$\tau = 1.25 \times 10^{-1}$, and approximations of degrees~$p = q = 3$. 
Already after the first time step, the approximations on the elements close to the boundary are different, which subsequently propagates naturally to the whole domain.
\begin{figure}[!ht]
\centering
\includegraphics[width = 0.48\textwidth]{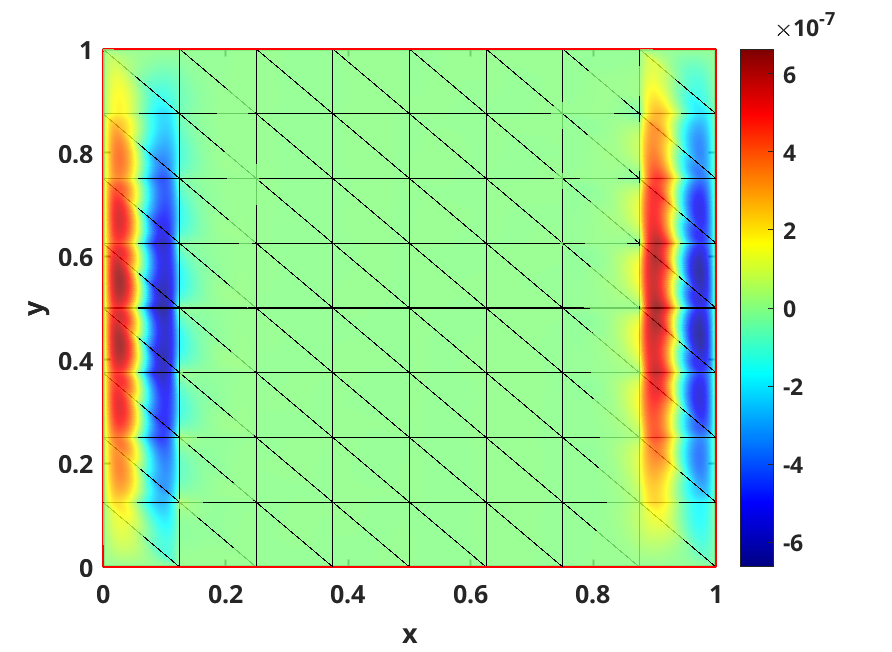}
\hspace{0.05in}
\includegraphics[width = 0.48\textwidth]{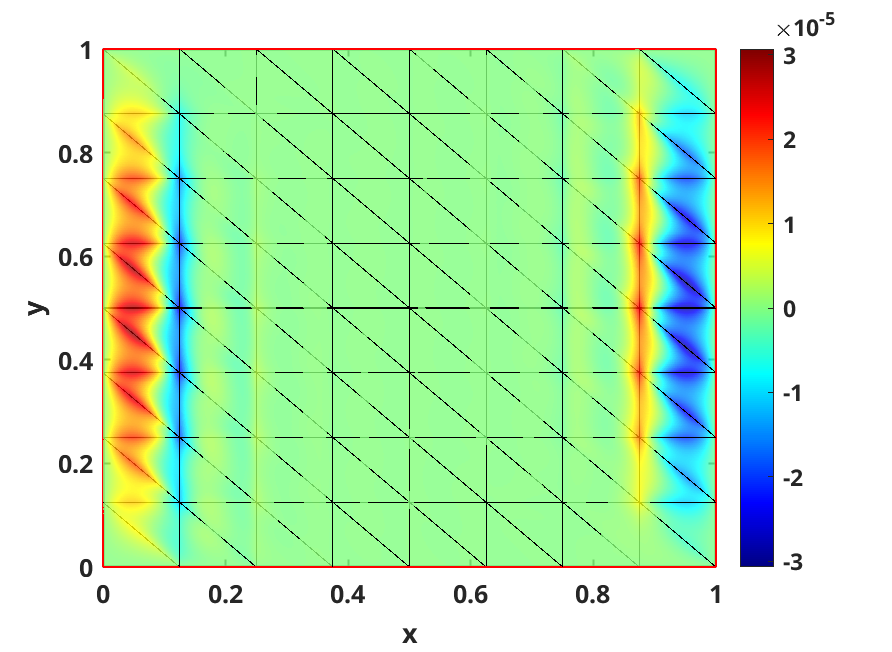}
\caption{
Plot of the difference of the discrete solutions~$\uht$ (left panel) and~$\vht$ (right panel) at~$t_1 = 1/8$ corresponding to~\texttt{Method I} and~\texttt{Method II} with~$p = q = 3$ for the problem with exact solution~$(u, v)$ in~\eqref{eq:solution-nonhomogeneous-Dirichlet}. 
\label{fig:comparison-Grad-L2}
}
\end{figure}

\paragraph{\texorpdfstring{$\tau$}{tau}-convergence.}
We now focus on the~$\tau$-convergence of~\texttt{Method I} and~\texttt{Method II}.
In Figure~\ref{fig:comparison-convergence-L2-Grad}, we show (in~\emph{log-log} scale) the errors in~\eqref{def:error-quantities} corresponding to the problem with exact solution in~\eqref{eq:solution-nonhomogeneous-Dirichlet} for a set of uniform time steps~$\tau = 2^{-(i + 1)}$ ($i = 1, 2, 3$) and approximations in time of degree~$q = 1, 2, 3, 4$.
Analogously as before, we have used a coarse simplicial mesh and approximations in space of degree~$p = 8$ so as to let the temporal error dominate.

When the Dirichlet boundary conditions are approximated according to Section~\ref{sec:discrete-data} (solid lines), both~\texttt{Method I} (left panel) and~\texttt{Method II} (right panel) show quasi-optimal convergence rates, namely, of order~$\mathcal{O}(\tau^{q+1})$ for~$\Norm{u - \uht}{C^0([0, T]; L^2(\Omega))}$, $\Norm{v - \vht}{C^0([0, T]; L^2(\Omega))}$, and~$\Norm{\nabla (u - \uht)}{C^0([0, T]; L^2(\Omega)^2)}$. 
Moreover, superconvergence of order~$\mathcal{O}(\tau^{q+2})$ is obtained for the postprocessed approximation~$\uhtsup$ when~$q > 1$ (see Remark~\ref{rem:superconvergence}).
On the other hand, \texttt{Method II} presents a degradation of the convergence rates when~$\gD$ is approximated using Lagrange interpolation (dashed lines).

\begin{figure}[!ht]
\centering
\includegraphics[width = 0.43\textwidth]{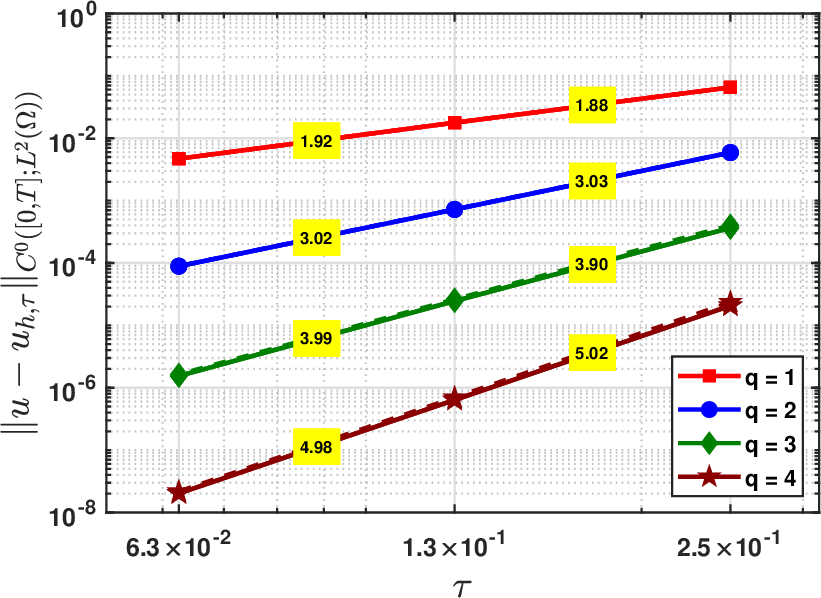}
\hspace{0.2in}
\includegraphics[width = 0.43\textwidth]{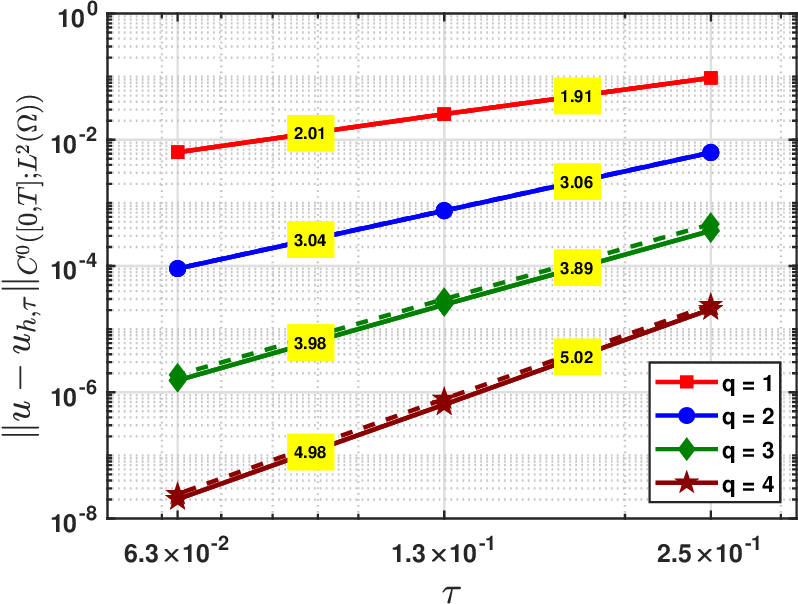} \\[1em]
\includegraphics[width = 0.43\textwidth]{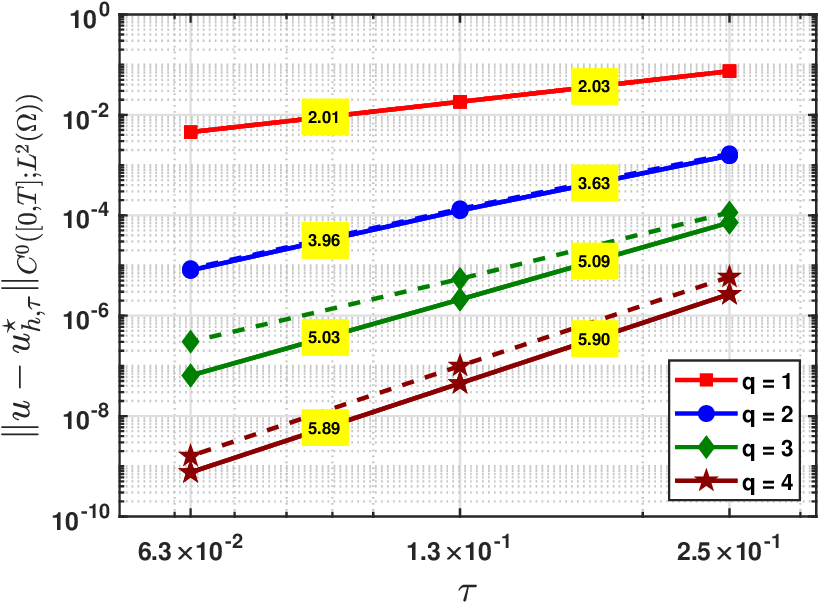}
\hspace{0.2in}
\includegraphics[width = 0.43\textwidth]{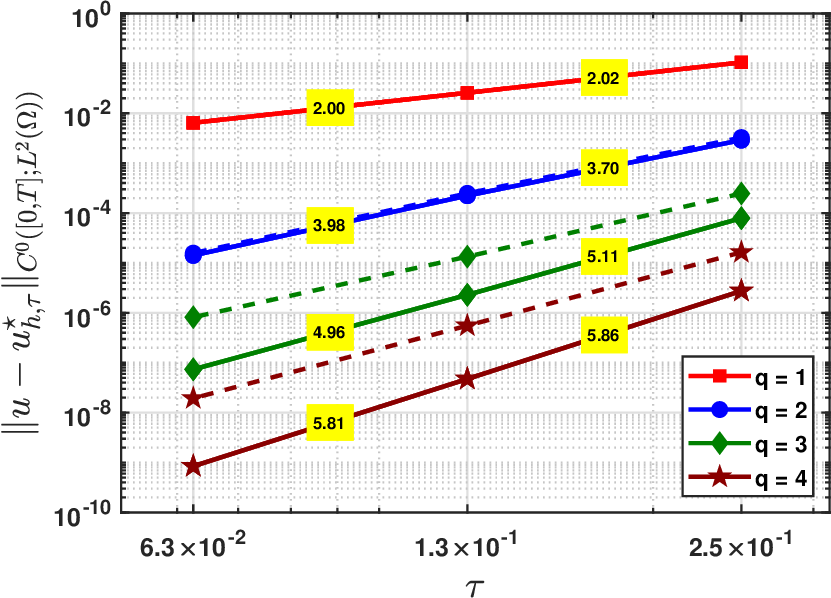} \\[1em]
\includegraphics[width = 0.43\textwidth]{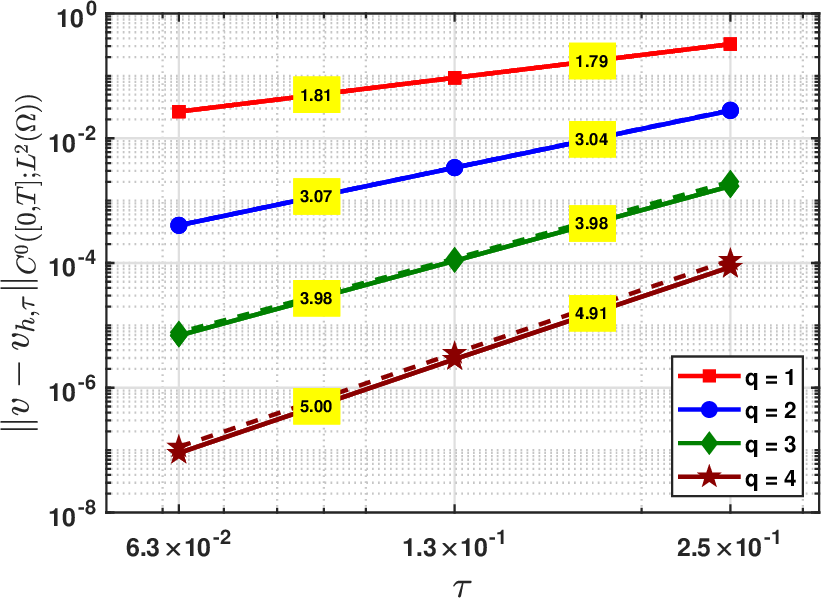}
\hspace{0.2in}
\includegraphics[width = 0.43\textwidth]{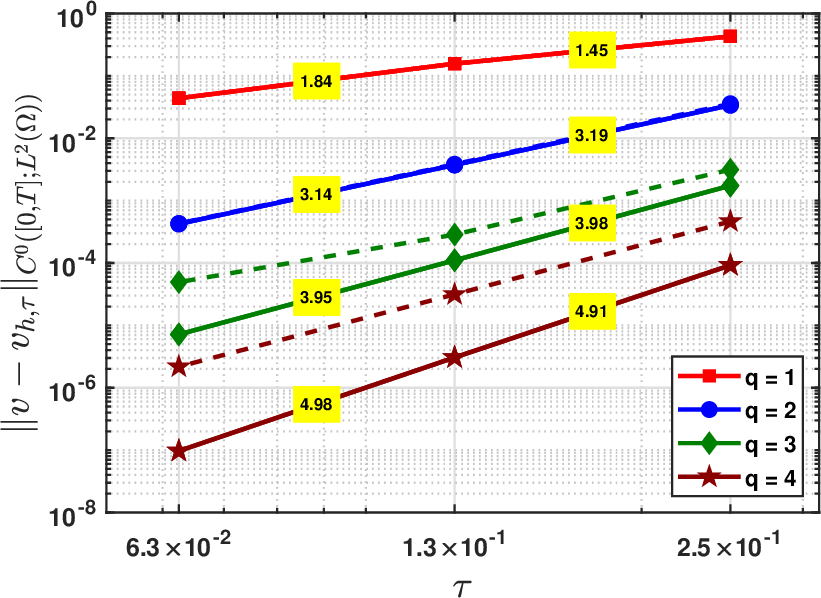} \\[1em]
\includegraphics[width = 0.43\textwidth]{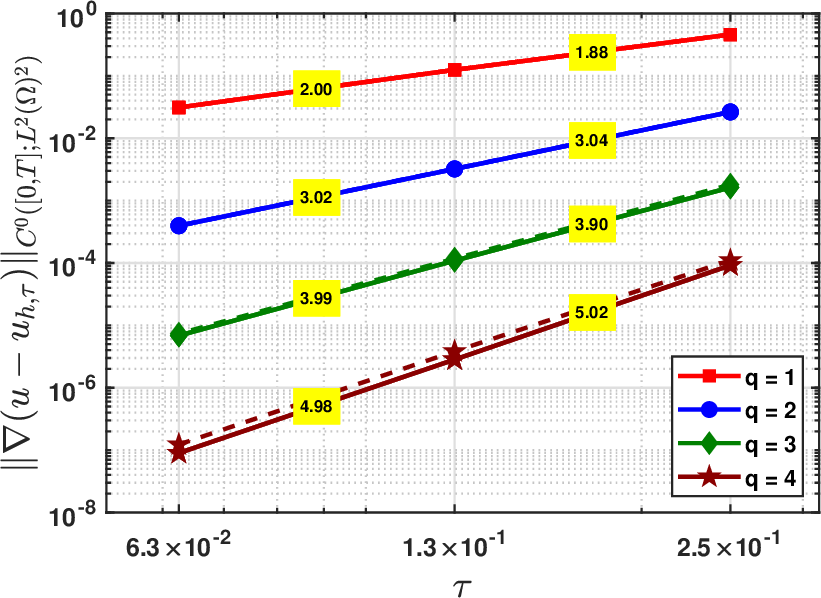}
\hspace{0.2in}
\includegraphics[width = 0.43\textwidth]{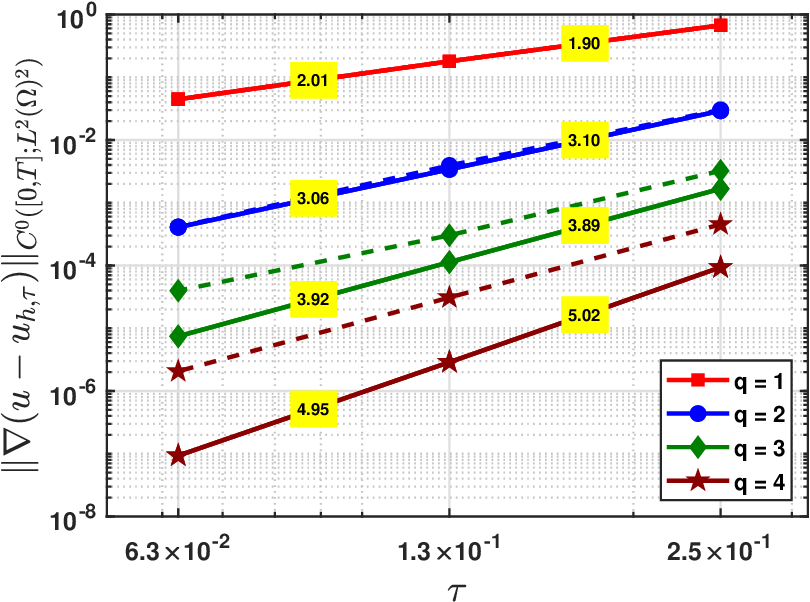}
\caption{$\tau$-convergence (in \emph{log-log} scale) of the errors in~\eqref{def:error-quantities} for~\texttt{Method I} (left panel) and~\texttt{Method II} (right panel) corresponding to the problem with exact solution~$(u, v)$ in~\eqref{eq:solution-nonhomogeneous-Dirichlet}. The results for the approximation of the Dirichlet boundary conditions according to Section~\ref{sec:discrete-data} are shown in solid lines, whereas those corresponding to Lagrange interpolation are shown in dashed lines. \label{fig:comparison-convergence-L2-Grad}}
\end{figure}

\paragraph{\texorpdfstring{$(p, q)$}{(p, q)}-convergence}
One of the main advantages of space--time methods is the possibility to obtain simultaneous high-order accuracy in space and time. In fact, when the solution is analytic, an exponential decay of the errors is expected if one fixes the space--time mesh and increase only the approximation degrees, resulting in the~$(p, q)$-version of the method. 

We fix a space--time mesh with~$h \approx 1.77 \times 10^{-1}$ and~$\tau = 0.25$. 
In Figure~\ref{fig:pq-convergence}, we present~(in \emph{semilogy} scale) the errors in~\eqref{def:error-quantities} for~\texttt{Method I} with approximations of degrees~$p = q$ and~$q = 1, \ldots, 6$. An exponential convergence of order~$\mathcal{O}(e^{- b \sqrt[3]{N_{\mathrm{DoFs}}}})$, where~$\mathrm{N}_{\mathrm{DoFs}}$ is the total number of degrees of freedom, is observed.

\begin{figure}[!ht]
\centering
\includegraphics[width = 0.43\textwidth]{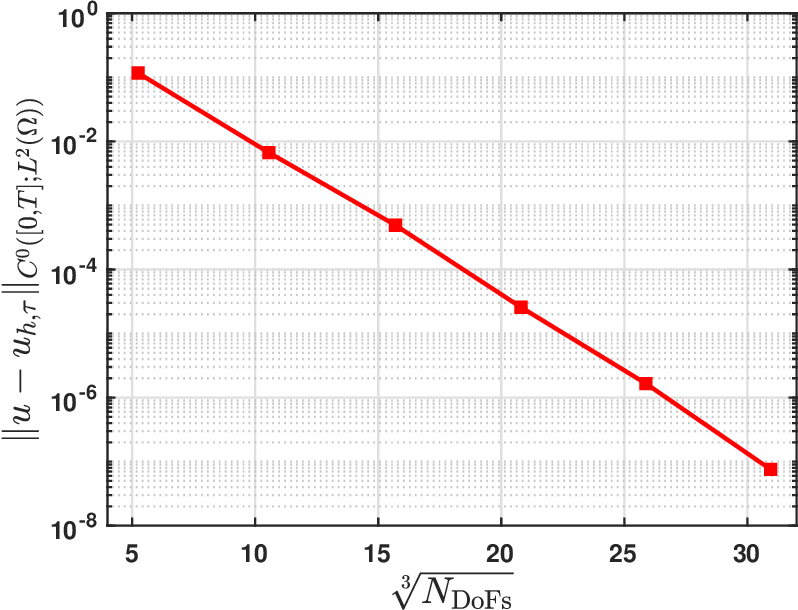}
\hspace{0.2in}
\includegraphics[width = 0.43\textwidth]{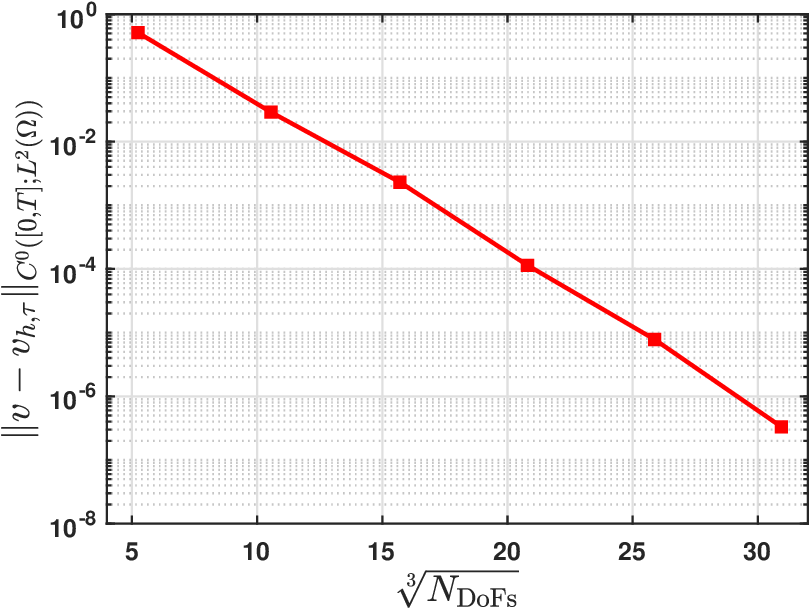}
\caption{$(p, q)$-convergence (in \emph{semilogy} scale) of the errors in~\eqref{def:error-quantities} for~\texttt{Method I} corresponding to the problem with exact solution~$(u, v)$ in~\eqref{eq:solution-nonhomogeneous-Dirichlet}. \label{fig:pq-convergence}}
\end{figure}

\subsection{\emph{A posteriori} error estimator}
We now focus on the~\emph{a posteriori} error estimate in Theorem~\ref{thm:a-posteriori}, which can be written as
\begin{equation*}
\Norm{u - \ut}{C^0([0, T]; L^2(\Omega))} \le \eta + \mathrm{osc}(f),
\end{equation*}
where~$\mathrm{osc}(f)$ is the term corresponding to the oscillation of the data.

We will validate the reliability and study numerically the efficiency of the error estimator~$\eta$. To do so, we define the effectivity index as
\begin{equation}
\label{eq:effectivity-index}
\text{Effectivity index} := \frac{\eta}{\Norm{\eu}{C^0([0, T]; L^2(\Omega))}}.
\end{equation}

Let~$\QT$ be given by~$(-1, 1)^2 \times (0, 1)$. We consider the~$(2 + 1)$-dimensional problem~\eqref{eq:model-problem-hamiltonian} with initial conditions, Dirichlet boundary conditions, and source term chosen so that the exact solution is given by (cf. \cite[\S4]{Dong-Mascotto-Wang:2024})
\begin{equation}
\label{eq:solution-posteriori}
u(x, y, t) = \psi(t) (1 - x^2) (1 - y^2) \quad v(x, y, t) = \psi'(t) (1 - x^2) (1 - y^2),
\end{equation}
for either~$\psi(t) = \cos(4t)$ or~$\psi(t) = t^{\alpha}$ for some~$\alpha > 0$. In the rest of this section, we set~$p = 4$ and a simplicial mesh with~$h = \sqrt{2}$ so that the spatial errors can be neglected.

\paragraph{Smooth solution.} We first consider the problem with exact solution~$(u, v)$ in~\eqref{eq:solution-posteriori} with~$\psi(t) = \cos(4t)$, so that~$u, v \in C^{\infty}(\QT)$.

In Figure~\ref{fig:estimator-smooth}(left panel), we compare the error~$\Norm{\eu}{C^0([0, T]; L^2(\Omega))}$ (solid lines) with the estimator~$\eta$ (dashed lines) for a sequence of time steps~$\tau$ and approximations in time of degree~$q = 1, 2, 3$. The corresponding effectivity indices are shown in Figure~\ref{fig:estimator-smooth}(right panel), which remain stable with respect to~$\tau$.

\begin{figure}[!ht]
\centering
\includegraphics[width = 0.43\textwidth]{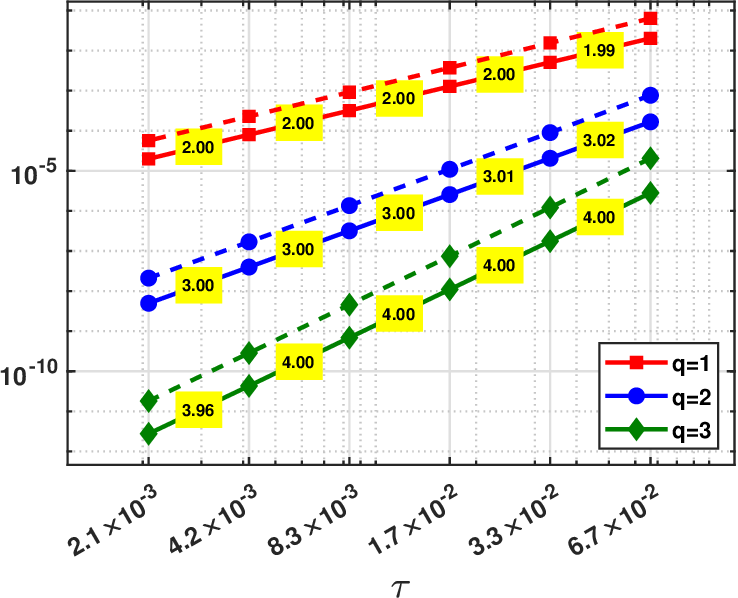}
\hspace{0.3in}
\includegraphics[width = 0.43\textwidth]{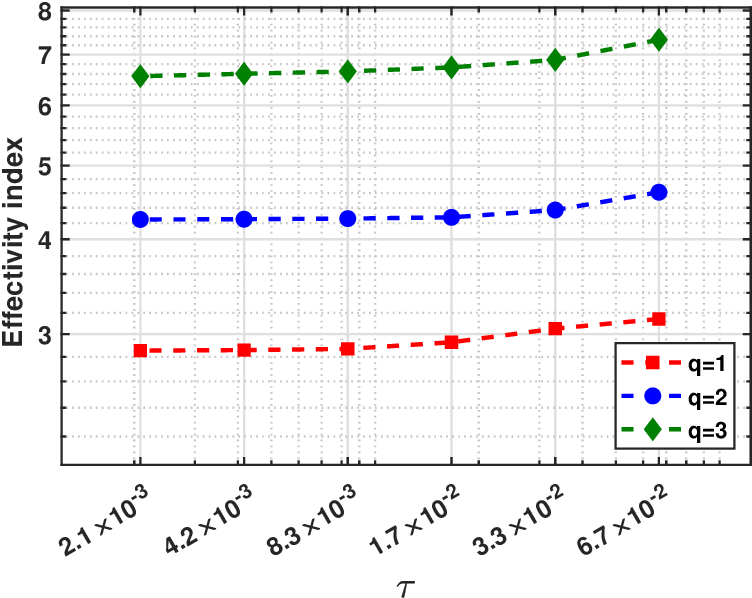}
\caption{\textbf{Left panel:} comparison of the error~$\Norm{\eu}{C^0([0, T]; L^2(\Omega))}$ (solid lines) and the estimator~$\eta$ (dashed lines) for the problem with smooth exact solution~$(u, v)$ in~\eqref{eq:solution-posteriori} and~$\psi(t) = \cos(4t)$. \textbf{Right panel:}  corresponding effectivity indices in~\eqref{eq:effectivity-index}. \label{fig:estimator-smooth}}
\end{figure}

\paragraph{Singular solution.} Similarly, we consider the problem with exact solution~$(u, v)$ in~\eqref{eq:solution-posteriori} and~$\psi(t) = t^{\alpha}$ for~$\alpha = 2.25$ and~$\alpha = 2.5$, so that~$u \in H^{\alpha + 1/2 - \varepsilon}(0, T; C^{\infty}(\Omega))$ for all~$\varepsilon > 0$.

In Figure~\ref{fig:estimator-singular}(left panel), we compare the error~$\Norm{\eu}{C^0([0, T]; L^2(\Omega))}$ (solid lines) with the estimator~$\eta$ (dashed lines) for~$\alpha = 2.25$ (first row) and~$\alpha = 2.5$ (second row) with quadratic approximations in time.
The same convergence rates of order~$\mathcal{O}(\tau^{\alpha})$ are observed in both cases, which are suboptimal by half an order.
The corresponding effectivity indices are shown in Figure~\ref{fig:estimator-singular}(right panel), which, as before, remain stable with respect to~$\tau$.
\begin{figure}[!ht]
\centering
\includegraphics[width = 0.43\textwidth]{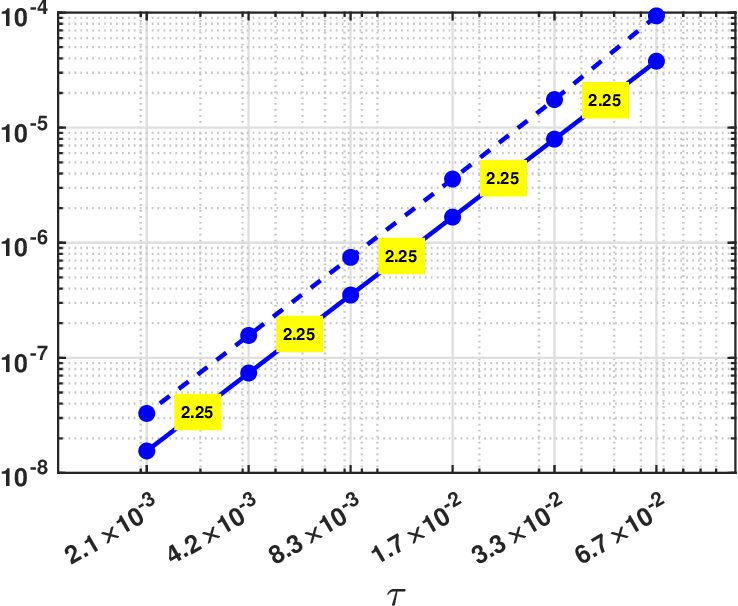}
\hspace{0.3in}
\includegraphics[width = 0.43\textwidth]{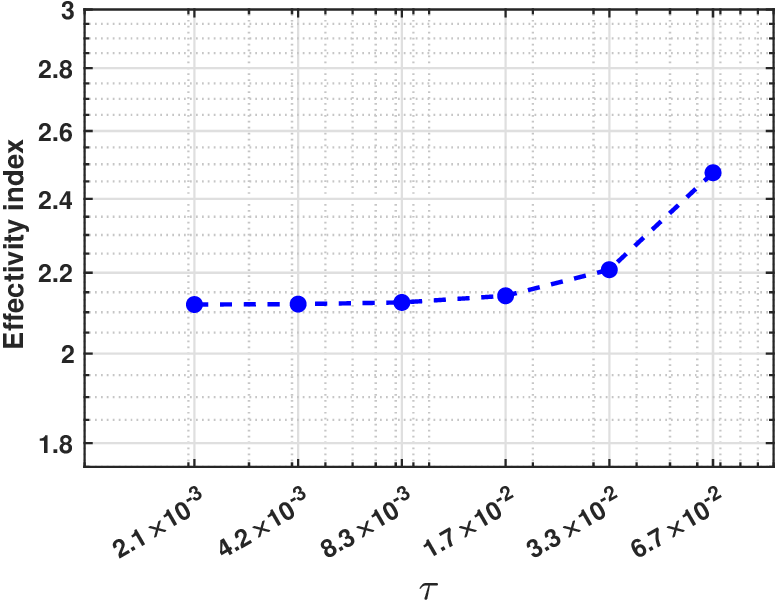} \\[1em]
\includegraphics[width = 0.43\textwidth]{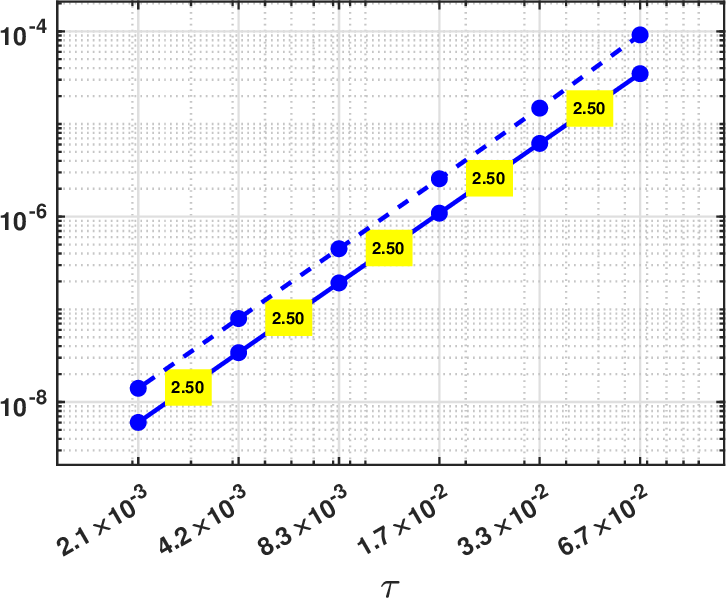}
\hspace{0.3in}
\includegraphics[width = 0.43\textwidth]{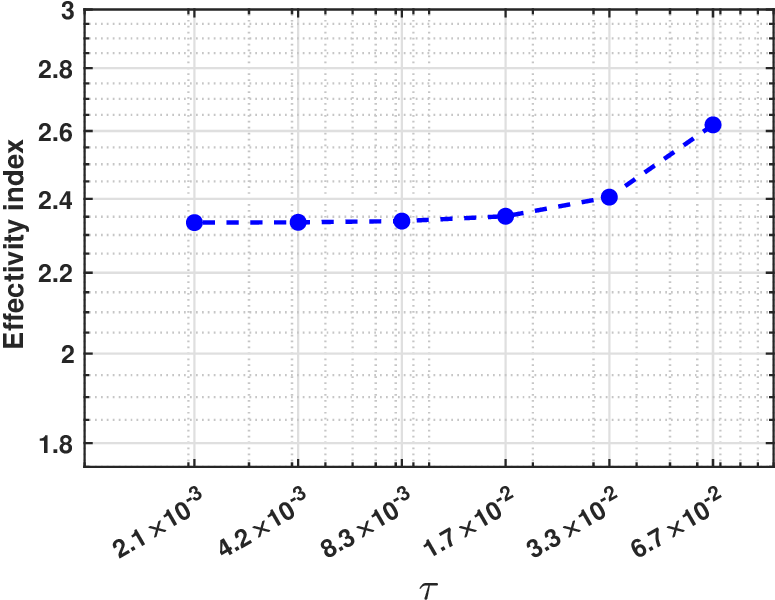}
\caption{\textbf{Left panel:} comparison of the error~$\Norm{\eu}{C^0([0, T]; L^2(\Omega))}$ (solid lines) and the estimator~$\eta$ (dashed lines) for the problem with singular exact solution~$(u, v)$ in~\eqref{eq:solution-posteriori} and~$\psi(t) = t^{2.25}$ (first row),  $\psi(t) = t^{2.5}$ (second row). \textbf{Right panel:}  corresponding effectivity indices in~\eqref{eq:effectivity-index}. \label{fig:estimator-singular}}
\end{figure}

\section{Concluding remarks\label{sec:conclusions}}
We have carried out a stability and convergence analysis of the continuous space--time FEM for the Hamiltonian formulation of the wave equation with possible nonhomogeneous Dirichlet boundary conditions.
By using some nonstandard test functions, we proved a continuous dependence of the discrete solution on the data of the problem in some~$C^0([0, T]; X)$-type norms, which is then used to derive 
\emph{a priori} error estimates with quasi-optimal convergence rates.
Based on the properties of a postprocessed approximation~$\uhtsup$, we also derived a reliable~\emph{a posteriori} estimate of the error in the~$C^0([0, T]; L^2(\Omega))$ norm.
Our numerical experiments show the expected convergence rates in all norms, as well as superconvergence in time of order~$\mathcal{O}(\tau^{q + 2})$ for the postprocessed approximation~$\uhtsup$ when~$q > 1$.
Efficiency and reliability of the error estimator have been numerically observed  for some test cases with smooth and singular solutions. 
We hope that the present analysis will be useful for studying more complex wave models. 

\section*{Acknowledgments}
The author acknowledges support from the
Italian Ministry of University and Research through the project PRIN2020 ``Advanced polyhedral discretizations of heterogeneous PDEs for multiphysics problems", and from the INdAM-GNCS through the
project CUP E53C23001670001.
The author is also grateful to Ilaria Perugia and Matteo Ferrari (University of Vienna) for kindly discussing this work.

\end{document}